\numberwithin{equation}{section}
\theoremstyle{definition}
\newtheorem{Definition}{Definition}[section]
\newtheorem{Example}[Definition]{Example}
\newtheorem{Remark}[Definition]{Remark}
\theoremstyle{plain}
\newtheorem{Theorem}[Definition]{Theorem}
\newtheorem{Proposition}[Definition]{Proposition}
\newtheorem{Corollary}[Definition]{Corollary}
\newtheorem{Lemma}[Definition]{Lemma}
\newcommand{\al}{\alpha}
\newcommand{\Ga}{\Gamma}
\newcommand{\ep}{\varepsilon}
\newcommand{\la}{\lambda}
\newcommand{\si}{\sigma}
\newcommand{\N}{\mathbb{N}}
\newcommand{\Z}{\mathbb{Z}}
\newcommand{\R}{\mathbb{R}}
\newcommand{\C}{\mathbb{C}}
\newcommand{\K}{\Bbbk}
\newcommand{\Bbo}{\boldsymbol{1}}
\newcommand{\Bzr}{\boldsymbol{0}}
\newcommand{\Fsl}{\mathfrak{sl}}
\newcommand{\Fm}{\mathfrak{m}}
\newcommand{\CA}{\mathcal{A}}
\newcommand{\CI}{\mathcal{I}}
\newcommand{\CO}{\mathcal{O}}
\newcommand{\scA}{\mathscr{A}}
\DeclareMathOperator{\Aut}{Aut}
\DeclareMathOperator{\Id}{Id}
\renewcommand{\Im}{\operatorname{Im}}
\DeclareMathOperator{\MaxSpec}{MaxSpec}
\DeclareMathOperator{\Specm}{Specm}
\DeclareMathOperator{\Supp}{Supp}
\renewcommand{\hat}{\widehat}
\renewcommand{\tilde}{\widetilde}
\newcommand{\spl}{\text{split}}
\newcommand{\ronto}{\twoheadrightarrow}
\title{Grothendieck rings of towers of twisted generalized Weyl algebras}
\author{Jonas Hartwig \and Daniele Rosso}
\date{\today}
\address{Department of Mathematics, Iowa State University, Ames, IA-50011, USA}
\email{jth@iastate.edu}
\urladdr{http://jth.pw}
\address{Department of Mathematics and Actuarial Science, Indiana University Northwest, Gary, IN-46408, USA}
\email{drosso@iu.edu}
\urladdr{http://pages.iu.edu/~drosso/}
\begin{document}
\maketitle
\begin{abstract}Twisted generalized Weyl algebras (TGWAs) $A(R,\sigma,t)$ are defined over a base ring $R$ by parameters $\sigma$ and $t$, where $\sigma$ is an $n$-tuple of automorphisms, and $t$ is an $n$-tuple of elements in the center of $R$. We show that, for fixed $R$ and $\sigma$, there is a natural algebra map $A(R,\sigma,tt')\to A(R,\sigma,t)\otimes_R A(R,\sigma,t')$. This gives a tensor product operation on modules, inducing a ring structure on the direct sum (over all $t$) of the Grothendieck groups of the categories of weight modules for $A(R,\sigma,t)$. We give presentations of these Grothendieck rings for $n=1,2$, when $R=\C[z]$. As a consequence, for $n=1$, any indecomposable module for a TGWA can be written as a tensor product of indecomposable modules over the usual Weyl algebra. In particular, any finite-dimensional simple module over $\mathfrak{sl}_2$ is a tensor product of two Weyl algebra modules.
\end{abstract}


\section{Introduction}

Generalized Weyl algebras (introduced by Bavula \cite{Bavula1992} and Rosenberg \cite{Rosenberg1995}) and, more generally, twisted generalized Weyl algebras (TGWAs), introduced by Mazorchuck and Turowska (\cite{MT1999}), are a broad family of algebras defined by generators and relations starting from a base ring $R$ and certain parameters $\sigma$ and $t$, where $\sigma\in\Aut(R)^n$ and $t\in Z(R)^n$. Many algebras of interest for ring theory and representation theory can be seen as special cases of this construction, for example (quantized) Weyl algebras in rank $n$ as well as certain quotients of universal enveloping algebras and many others. Modules over TGWAs can be studied in great generality, of particular interest are weight modules, where the base ring $R$ plays the role that the Cartan subalgebra has in the study of representations of a Lie (or Kac-Moody) algebra. These have been studied in \cite{DGO},\cite{MT1999},\cite{MPT2003},\cite{H2006},\cite{H2018}.

However, unlike the case of enveloping algebras, TGWAs in general do not have a natural structure of bialgebras or Hopf algebras and there is no obvious monoidal structure on their module categories (weight or otherwise). In this paper we propose the vision that, instead of considering each TGWA individually, we should group all of them together, for fixed $R$ and $\sigma$ and varying $t$. In fact if $A(R,\si,t)$ is a TGWA, despite the fact that it is not a bialgebra itself, our first main result (Theorem \ref{thm:coprod}) shows that there is a naturally defined algebra map $\Delta:A(R,\sigma,tt')\to A(R,\sigma,t)\otimes_R A(R,\sigma,t')$. Therefore, by taking the restriction functor along $\Delta$, we can define an interesting tensor product operation on the direct sum, over all values of the parameter $t$, of the module categories for the TGWAs.

This construction is reminiscent of the towers of algebras, as formalized by Bergeron and Li in \cite{BL2009}, which is why we use this terminology, although it is a little bit of an abuse because there are some significant differences. For example, our map $\Delta$ goes in the opposite direction, and our algebras are not finite dimensional, though these are not significant concerns. More importantly, $ A(R,\sigma,t)\otimes_R A(R,\sigma,t')$ is not in general a finite rank module over the image of $A(R,\sigma,tt')$, hence taking the induction functor along $\Delta$ does not give well defined operations on the Grothendieck groups of the module categories. Consequently, the only structure that we can define on the sum of all the Grotendieck groups $\bigoplus_t K_0(A(R,\sigma,t)\text{-mod})$ is that of an associative algebra via tensor product and restriction.

Our other main results are giving explicit descriptions, both in terms of a basis with multiplication formulas and in terms of generators and relations, of the algebras we defined, for the cases of TGWAs in rank $1$ and $2$. In rank $1$ we are able to describe also the algebra resulting from the split Grothendieck groups and the map from the split algebra to the non-split one.

It follows from our results that any indecomposable module of a TGWA of rank $1$ can be written as a tensor product of modules for the usual Weyl algebra, which applies in particular to finite dimensional irreducible modules and Verma modules for $\mathfrak{sl}_2$.

The paper is organized as follows:
\begin{itemize}
\item In Section \ref{sec:tgwa} we recall some basic definitions about twisted generalized Weyl algebras that we will need in the paper.
\item In Section \ref{section:towers} we show the existence of the $\Delta$ map, use it to define the associative algebras structure on the sum of all the Grothendieck groups (both non-split and split), and prove some general structural results about these algebras.
\item In Section \ref{sec:line} we explicitly describe the algebras from Section \ref{section:towers} (both non-split and split) in the case of rank $1$, with the base ring being polynomials in one variable and $\sigma$ being a shift. 
\item In Section \ref{sec:cylinder} we give an explicit description of the non-split algebra in the case of rank $2$ over polynomials in one variable. This relies heavily on previous work by the authors in \cite{HarRos16}, \cite{H2018}, \cite{HarRos20}.
\end{itemize}

\subsection{Future Directions}
Our construction is very general, so there are many interesting special cases of these Grothendieck rings that can be approached in the near future. We list a few such possibilities.
\begin{itemize}
\item We have not described here the split algebra for the rank $2$ case. In order for this to be done, it requires giving a characterization of indecomposable weight modules for rank $2$ TGWAs which is a potentially intriguing avenue of research.
\item In the rank $1$ case, we can take $\sigma$ to be an automorphism of finite order instead. This would change the geometry of the $\sigma$-orbit from a line to a circle, and can be accomplished either by taking a base ring with positive characteristic or by scaling by a root of unity. Both simple and indecomposable weight modules for such TGWAs are relatively well understood (see \cite{DGO}), which makes this more approachable.
\item Combining the previous two examples, in rank $2$ we could consider pairs of automorphisms that are both of finite order, hence giving us a torus orbit. This is more of a long term project, as a good description of weight modules for such TGWAs does not exist as of yet.
\end{itemize}

Additionally, our construction should have applications to categorification problems. The tower of group algebras of the symmetric groups gives a categorification of the Hopf algebra of symmetric functions (see \cite{Gei1977}). More generally, towers of algebras in the sense of \cite{BL2009} categorify dual pairs of graded Hopf algebras (which can be combined into the categorification of the Heisenberg double as done in \cite{SY2015}). It is reasonable to expect that the algebras categorified by our towers of TGWAs should be of interest.

\subsection*{Acknowledgments}
The first author gratefully acknowledges support from Simons Collaboration Grant for Mathematicians, award number 637600. The second author was supported in this research by a Grant-In-Aid of Research and a Summer Faculty Fellowship from Indiana University Northwest.

\section{Twisted generalized Weyl algebras}\label{sec:tgwa}

In this section we recall some basic definitions for twisted generalized Weyl algebras, following \cite{MT1999} where they were first defined. We also state some previous results about properties of TGWAs that we will need in what follows.

\begin{Definition}\label{def:tgwa}
Let $\K$ be a commutative ring, $R$ an associative unital $\K$-algebra, $n$ a positive integer, $\si=(\sigma_1^{1/2},\sigma_2^{1/2},\ldots,\sigma_n^{1/2})\in \Aut_\K(R)^n$ an $n$-tuple of commuting automorphisms of $R$, $t=(t_1,t_2,\ldots,t_n)\in Z(R)^n$ an $n$-tuple of central regular elements of $R$. The \emph{twisted generalized Weyl construction (TGWC) of rank $n$}, denoted $\tilde{\CA}=\tilde{\CA}(R,\si,t)$, is the associative algebra obtained from $R$ by adjoining $2n$ new generators
$X_1^\pm,X_2^\pm,\ldots,X_n^\pm$ that are not required to commute with each other, nor with the elements of $R$, but are subject to the following relations for all $i,j=1,2,\ldots,n$:
\begin{equation}\label{eq:TGWA-Rels}
X_i^\pm  r = \si_i^{\pm 1}(r) X_i^\pm,\qquad 
X_i^\pm X_i^\mp = \si_i^{\pm 1/2}(t_i), \qquad 
[X_i^\pm, X_j^\mp ] =0 \quad \text{if $i\neq j$}.
\end{equation}  
The \emph{twisted generalized Weyl algebra (TGWA)}, denoted $\CA(R,\si,t)$, is defined as the quotient $\tilde{\CA}/\CI$, where $\CI$ is the two sided ideal of $R$-torsion elements:
\begin{equation}
\CI=\{a\in \tilde{\CA}\mid \exists r\in R_{\mathrm{reg}}\cap Z(R): ra=0\}
\end{equation}
where $R_\mathrm{reg}$ is the set of regular elements in $R$ and $Z(R)$ is the center. 
\end{Definition}
Our definition of $\CI$ is different but equivalent to the original one from \cite{MT1999}. The equivalence is proved in \cite[Thm.~3.11]{H2018}, although there is a typo in that paper, with $Z(R)$ missing from the definition. The following property is useful when constructing homomorphisms from TGWAs.

\begin{Lemma}[{\cite[Cor.~3.12]{H2018}}] \label{lem:TGWAind}
If $B$ is any $\K$-algebra and $\tilde{\varphi}:\tilde{\CA}(R,\sigma,t)\to B$ is a $\K$-algebra homomorphism such that $\varphi(r)$ is regular in $B$ for every regular element $r$ of $R$, then $\tilde{\varphi}$ induces a $\K$-algebra homomorphism $\varphi:\CA(R,\si,t)\to B$.
\end{Lemma}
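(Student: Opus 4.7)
The plan is to show that $\tilde{\varphi}$ annihilates the two-sided ideal $\CI$ of $R$-torsion elements, so that $\tilde{\varphi}$ descends to a $\K$-algebra homomorphism $\varphi:\CA(R,\si,t)=\tilde{\CA}/\CI\to B$ by the universal property of the quotient.

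Concretely, pick an arbitrary $a\in\CI$. By the definition of $\CI$, there exists $r\in R_{\mathrm{reg}}\cap Z(R)$ with $ra=0$. Applying the homomorphism $\tilde{\varphi}$ to the identity $ra=0$ in $\tilde{\CA}$ yields $\tilde{\varphi}(r)\,\tilde{\varphi}(a)=0$ in $B$. Since $r$ is a regular element of $R$, the hypothesis on $\tilde{\varphi}$ guarantees that $\tilde{\varphi}(r)$ is regular in $B$, i.e.\ a (two-sided) non-zero-divisor. Consequently $\tilde{\varphi}(a)=0$, so $\tilde{\varphi}(\CI)=0$ and the desired factorization exists.

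The only genuine subtlety is making sure ``regular'' means the same thing on both sides of the map: an element $r\in R$ is regular if neither left nor right multiplication by $r$ kills anything, and likewise for $B$. Because the witness $r$ is taken to lie in $Z(R)$, the equation $ra=0$ is automatically also $ar=0$, so any reasonable reading of ``regular'' for $\tilde\varphi(r)\in B$ (one-sided or two-sided) is enough to conclude $\tilde\varphi(a)=0$. No nontrivial calculation is required, so there is no real obstacle; the lemma is essentially just a restatement of the universal property of the $R$-torsion quotient, combined with the observation that homomorphisms preserving regularity preserve ``nothing is killed by a regular element.''
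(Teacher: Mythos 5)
Your proof is correct and is the natural argument: showing $\tilde\varphi$ kills the torsion ideal $\CI$ by applying it to the defining relation $ra=0$ and using regularity of $\tilde\varphi(r)$. The paper itself does not reprove this lemma but cites it from \cite[Cor.~3.12]{H2018}; your argument is the standard one that proof would use, so there is no meaningful divergence to report.
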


\begin{Definition}
$\CA(R,\si,t)$ is \emph{consistent} if 
the canonical map $R\to\CA(R,\si,t)$, $r\mapsto r1$, is injective.
\end{Definition}
This definition is important for us because consistency guarantees that the relations do not make the TGWA into the trivial algebra.
\begin{Theorem}[\cite{FutHar2012}]
$\CA(R,\si,t)$ is consistent if and only if
\begin{gather}
\si_i^{1/2}(t_j)\si_j^{1/2}(t_i)=
\si_i^{-1/2}(t_j)\si_j^{-1/2}(t_i)\quad\forall i\neq j \label{eq:cons1} \\
\si_i^{1/2}\si_j^{1/2}(t_k)
\si_i^{-1/2}\si_j^{-1/2}(t_k)=
\si_i^{1/2}\si_j^{-1/2}(t_k)
\si_i^{-1/2}\si_j^{1/2}(t_k) \quad\forall i\neq j\neq k\neq i. \label{eq:cons2}
\end{gather}
\end{Theorem}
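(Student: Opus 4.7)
The plan is to prove the two directions separately, since they have quite different flavors.

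For necessity, assume $\CA(R,\si,t)$ is consistent. The strategy is to compute a suitable degree-zero monomial in $\tilde\CA$ in two different ways using only the defining relations (\ref{eq:TGWA-Rels}), producing two a priori different elements of $R\cdot 1\subset\tilde\CA$ that must then be equal in $\CA$ and hence, by consistency, in $R$. To obtain (\ref{eq:cons1}) I would apply this to the word $X_j^+ X_i^- X_i^+ X_j^-$ for $i\neq j$: collapsing the inner pair via $X_i^- X_i^+=\si_i^{-1/2}(t_i)$ and then pushing it past $X_j^\pm$ yields one expression, while first commuting $X_j^+$ past $X_i^-$ and $X_i^+$ past $X_j^-$ using $[X_i^\pm,X_j^\mp]=0$ and then collapsing $X_j^- X_j^+$ yields another; equating the two and applying $\si_i^{1/2}\si_j^{-1/2}$ (an automorphism of $R$) produces (\ref{eq:cons1}) exactly. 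To obtain (\ref{eq:cons2}) I would run the analogous two-path comparison on a degree-zero word in three distinct generators, for instance one containing a factor $X_k^+ X_k^-$ that can be shuffled past products of $X_i^{\pm}$ and $X_j^{\pm}$ in two inequivalent ways.

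For sufficiency, the plan is to construct an explicit faithful representation whose degree-zero part is $R$. On the $\Z^n$-graded free right $R$-module $M=\bigoplus_{g\in\Z^n}v_g\,R$, I would define an action of the free algebra on $R$ and on the $X_i^\pm$ by $r\cdot v_g=v_g\,\si^g(r)$, where $\si^g:=\si_1^{g_1}\cdots\si_n^{g_n}$, and $X_i^\pm\cdot v_g=v_{g\pm e_i}\,u_i^\pm(g)$, with $u_i^\pm(g)\in R$ a carefully chosen product of $\si$-shifts of the $t_j$'s. The relations $X_i^\pm X_i^\mp=\si_i^{\pm 1/2}(t_i)$ pin down the $u_i^\pm(g)$ up to a gauge choice, and the crucial relations $[X_i^+,X_j^-]=0$ for $i\neq j$, tracked through degree, reduce at each $g$ to exactly (\ref{eq:cons1}) for each pair $(i,j)$ and, once a third index $k$ enters, to (\ref{eq:cons2}) for the triples. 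Thus, assuming the hypotheses, the prescription defines a $\K$-algebra homomorphism $\tilde\CA\to\End_\K(M)$. By Lemma \ref{lem:TGWAind}, together with the observation that $R$ acts by non-zero-divisors on each homogeneous component of $M$, this map factors through $\CA$; since $R$ acts faithfully on $v_0\,R\cong R$, we obtain $R\hookrightarrow\CA$, which is consistency.

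The main obstacle is the sufficiency direction, specifically finding the correct gauge for $u_i^\pm(g)$ so that all TGWA relations can be checked and so that the two conditions (\ref{eq:cons1}), (\ref{eq:cons2}) emerge as exactly the needed obstructions, neither over- nor under-determined. A good inductive setup on $|g|$, together with the observation that any compatibility issue involving four or more indices reduces to a rank-$\leq 3$ subsystem, explains why only pair and triple conditions appear. This is the computational core of \cite{FutHar2012}, whose construction I would follow.
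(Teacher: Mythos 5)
Your proposal is essentially correct, and since the paper cites \cite{FutHar2012} without reproducing a proof, I can only compare against a correct argument; your two-direction strategy (ambiguity resolution for necessity, explicit graded representation for sufficiency) is exactly the kind of argument the cited source carries out, and I verified the core computations go through. In particular your word $X_j^+X_i^-X_i^+X_j^-$ does give, after collapsing the inner pair and pushing through $X_j^\pm$ on one hand, and commuting first and collapsing on the other, the identity $\si_j\si_i^{-1/2}(t_i)\si_j^{1/2}(t_j)=\si_i^{-1}\si_j^{1/2}(t_j)\si_i^{-1/2}(t_i)$, which after applying $\si_i^{1/2}\si_j^{-1/2}$ is exactly \eqref{eq:cons1}. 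For the sufficiency module: taking the gauge $u_1^+(g)\equiv 1$, so $u_1^-(g)=\si^{-g}\si_1^{1/2}(t_1)$, forces $u_j^-$ independent of $g_1$ and makes the $[X_j^+,X_1^-]=0$ constraint reduce, after applying $\si_1^{1/2}\si_j^{-1/2}$, precisely to \eqref{eq:cons1}, confirming your claim that the pair conditions appear; the triple overdeterminacy producing \eqref{eq:cons2} is the expected pattern for $n\ge 3$.

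Two small corrections you should make. First, the twist should be $r\cdot v_g=v_g\,\si^{-g}(r)$, not $v_g\,\si^{g}(r)$: with your sign the relation $X_i^+r=\si_i(r)X_i^+$ on $M$ forces $\si_i^2=\mathrm{id}$, whereas with $\si^{-g}$ the check closes correctly. Second, the appeal to Lemma \ref{lem:TGWAind} is slightly off as stated, because that lemma asks $\tilde\varphi(r)$ to be \emph{regular in $B$}, and for $B=\End_\K(M)$ regularity would require $\tilde\varphi(r)$ to be invertible. What you actually need (and have) is that $\tilde\varphi(r)$ is injective on $M$ for regular $r\in R$: then $a\in\CI$ with $ra=0$ gives $\tilde\varphi(r)\circ\tilde\varphi(a)=0$, and injectivity forces $\tilde\varphi(a)=0$, so $\CI\subseteq\ker\tilde\varphi$ and the action descends to $\CA$; from faithfulness on $v_0 R\cong R$ you conclude $R\hookrightarrow\CA$. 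Replacing the citation of the lemma by this direct argument makes the sufficiency step airtight.
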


\section{Towers of twisted generalized Weyl algebras}\label{section:towers}
In the rest of the paper we assume that $R$ is an integral domain\footnote{More generally we may work with a noncommutative domain $R$ having an anti-automorphism $\ast$ which is the identity on the center.}. Fix $\sigma=(\sigma_1^{1/2}, \ldots, \sigma_n^{1/2})\in \Aut(R)^n$, an $n$-tuple of commuting automorphism of $R$. For each $t=(t_1,\ldots, t_n)\in (R\setminus\{0\})^n$ such that $\CA(R,\sigma,t)$ is consistent, we define the TGWA $A(t):=\CA(R,\sigma,t)$.

\begin{Definition}
We define the set of all solutions to the consistency equations 
$$\Omega=\Omega(R,\si)=\{t\in (R\setminus\{0\})^n~|~\CA(R,\si,t) \text{ is consistent }\}.$$
\end{Definition}

\begin{Lemma} If $t,t'\in\Omega$ then $t\cdot t'=(t_1t_1',t_2t_2',\ldots,t_nt_n')\in \Omega$.
\end{Lemma}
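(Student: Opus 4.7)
The plan is to show that $\Omega$ is closed under componentwise multiplication by directly verifying the two consistency identities \eqref{eq:cons1} and \eqref{eq:cons2} for $t\cdot t'$ from the corresponding identities for $t$ and $t'$ individually. First I would dispense with the conditions defining the ambient set $(R\setminus\{0\})^n$ and $Z(R)^n$: since $R$ is an integral domain, $t_it_i'\neq 0$ follows from $t_i,t_i'\neq 0$; centrality of $t_it_i'$ is immediate because $Z(R)$ is a subring. (In the noncommutative setting of the footnote, $R$ is a domain with an anti-automorphism fixing $Z(R)$, and the same conclusions hold.)

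The key observation is that every quantity appearing in \eqref{eq:cons1} and \eqref{eq:cons2} is central: since each $\sigma_i^{\pm 1/2}$ is a $\K$-algebra automorphism of $R$, it preserves $Z(R)$, so all of the terms $\sigma_i^{\pm 1/2}(t_j)$, $\sigma_i^{\pm 1/2}\sigma_j^{\pm 1/2}(t_k)$ and their primed versions lie in $Z(R)$. In particular they commute with each other, so products can be rearranged freely. This is precisely what makes the argument go through uniformly in the commutative and noncommutative-domain cases.

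For \eqref{eq:cons1}, I would expand
\[
\si_i^{1/2}(t_jt_j')\,\si_j^{1/2}(t_it_i') = \bigl[\si_i^{1/2}(t_j)\,\si_j^{1/2}(t_i)\bigr]\bigl[\si_i^{1/2}(t_j')\,\si_j^{1/2}(t_i')\bigr]
\]
using that $\sigma_i^{1/2}$ and $\sigma_j^{1/2}$ are ring homomorphisms and that the resulting factors commute. Applying \eqref{eq:cons1} to each bracket (once for $t$, once for $t'$) and then reversing the expansion yields $\si_i^{-1/2}(t_jt_j')\,\si_j^{-1/2}(t_it_i')$, as required.

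For \eqref{eq:cons2}, the argument is entirely analogous: use that $\si_i^{1/2}\si_j^{1/2}$ and $\si_i^{-1/2}\si_j^{-1/2}$ are ring homomorphisms to factor
\[
\si_i^{1/2}\si_j^{1/2}(t_kt_k')\,\si_i^{-1/2}\si_j^{-1/2}(t_kt_k')
\]
into a product of the corresponding expression for $t$ times that for $t'$, apply \eqref{eq:cons2} to each, and reassemble on the other side. There is no real obstacle in this lemma; the only point requiring attention is tracking centrality so that the rearrangements are legitimate, and this is handled once and for all by observing that the $\sigma$'s stabilize $Z(R)$.
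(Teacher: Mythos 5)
Your proof is correct and follows the same approach as the paper, which simply notes that the result is straightforward from \eqref{eq:cons1}--\eqref{eq:cons2} using that the $\sigma_i^{1/2}$ are multiplicative. You have written out in full the details the paper leaves implicit, including the centrality bookkeeping that justifies the rearrangements.
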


\begin{proof}
 Straightforward from \eqref{eq:cons1}-\eqref{eq:cons2}, using that $\sigma_i^{1/2}$ are multiplicative.
\end{proof}

Let $A(t)\otimes_R A(t')$ denote the tensor product of $A(t)$ and $A(t')$ viewed as left modules over the commutative ring $R$. Explicitly

\begin{equation}\label{eq:Rtensor}
A(t)\otimes_R A(t') := \big(A(t)\otimes_\Z A(t')\big)/\big((ra)\otimes b - a\otimes (rb)\mid r\in R, a\in A(t), b\in A(t')\big)
\end{equation}

\begin{Theorem}\label{thm:coprod}
\begin{enumerate}[{\rm (a)}]
\item For any two solutions to the consistency equations $t, t'\in R^n$, there is a homomorphism of $R$-rings
\begin{equation}\label{eq:tgwa-coprod}
\Delta_{t,t'}:A(tt')\to A(t)\otimes_R A(t')
\end{equation}
which is uniquely determined by
$$ r\mapsto r\otimes 1=1\otimes r\qquad \forall r\in R$$
$$ X_i^{\pm}(tt')\mapsto X_i^{\pm}(t)\otimes X_i^{\pm}(t')\qquad i=1,\ldots,n $$ 
where $X_i^{\pm}(t)$ denotes the generators in $A(t)$.
\item For any three solutions to the consistency equations $t, t', t''\in R^n$, the following coassociative law holds:
\begin{equation}
\big(\Delta_{t,t'}\otimes \Id_{A(t'')}\big)\circ\Delta_{tt',t''} = 
\big(\Id_{A(t)}\otimes \Delta_{t,t't''}\big)\circ\Delta_{t,t't''}
\end{equation}
\item For any two solutions to the consistency equations $t, t'\in R^n$, the following cocommutative law holds:
\begin{equation}
P\circ \Delta_{t,t'} = \Delta_{t',t},
\end{equation}
where $P(x\otimes y)=y\otimes x$.
\end{enumerate}
\end{Theorem}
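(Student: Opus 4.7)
The plan is to handle part (a) via Lemma \ref{lem:TGWAind}, and then derive (b) and (c) by direct verification on generators. For (a), I first build a $\K$-algebra homomorphism $\tilde{\Delta}:\tilde{\CA}(R,\sigma,tt')\to A(t)\otimes_R A(t')$ from the TGW \emph{construction}, specified on generators by $r\mapsto r\otimes 1$ and $X_i^\pm(tt')\mapsto Y_i^\pm := X_i^\pm(t)\otimes X_i^\pm(t')$. The substantive step is to verify that the $Y_i^\pm$ satisfy the three defining relations \eqref{eq:TGWA-Rels} with parameters $tt'$: the commutation relation with $R$ follows from the corresponding TGWA relations in each factor; the relation $Y_i^+Y_i^- = \sigma_i^{1/2}(t_it_i')$ reduces to $\sigma_i^{1/2}(t_i)\otimes\sigma_i^{1/2}(t_i') = \sigma_i^{1/2}(t_it_i')\otimes 1$, using multiplicativity of $\sigma_i^{1/2}$ and the $R$-tensor relation to slide a central scalar from one side to the other; and $[Y_i^\pm,Y_j^\mp]=0$ for $i\neq j$ follows from $[X_i^\pm,X_j^\mp]=0$ in each factor. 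Lemma \ref{lem:TGWAind} then descends $\tilde{\Delta}$ to the desired $\Delta_{t,t'}$ once I check that $r\otimes 1$ is regular in the target for each regular $r\in R$, which follows because $R$ is a domain and the graded components of $A(t),A(t')$ are torsion-free $R$-modules by consistency.

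The principal obstacle is making sense of an algebra structure on $A(t)\otimes_R A(t')$ to begin with: since $R$ does not sit centrally in either factor, the naive multiplication $(a\otimes b)(a'\otimes b')=aa'\otimes bb'$ is ill-defined on the full $R$-tensor product, because moving a central scalar past a homogeneous element of degree $g$ twists it by $\sigma^g$, and the twists from the two factors only match when those factors have the same degree. My fix is to work on the graded-diagonal subalgebra $\bigoplus_{g\in\Z^n}A(t)_g\otimes_R A(t')_g$, where multiplication is well-defined; since the image of $\tilde{\Delta}$ (and hence $\Delta_{t,t'}$) lands in this subalgebra, all computations above are internally consistent there.

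For (b), I evaluate both compositions on the generators of $A(tt't'')$. On $r\in R$, each yields $r\otimes 1\otimes 1$; on $X_i^\pm$, each yields the fully-diagonal triple tensor $X_i^\pm(t)\otimes X_i^\pm(t')\otimes X_i^\pm(t'')$. (I read the second $\Delta_{t,t't''}$ on the right as a typo for $\Delta_{t',t''}$ so that the composition is type-correct.) Since algebra homomorphisms agreeing on generators coincide, coassociativity follows. For (c), the computation on generators is immediate: $P\circ\Delta_{t,t'}(r)=P(r\otimes 1)=1\otimes r=r\otimes 1=\Delta_{t',t}(r)$ (by the $R$-tensor relation) and $P\circ\Delta_{t,t'}(X_i^\pm)=X_i^\pm(t')\otimes X_i^\pm(t)=\Delta_{t',t}(X_i^\pm)$, giving the claim.
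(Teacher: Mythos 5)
Your argument follows essentially the same route as the paper's proof: verify the defining relations of $\tilde{\mathcal{A}}(R,\sigma,tt')$ on $Y_i^\pm := X_i^\pm(t)\otimes X_i^\pm(t')$, descend to the TGWA via Lemma~\ref{lem:TGWAind} by checking the regularity hypothesis through torsion-freeness over the domain $R$, and check (b), (c) on generators (the paper goes formally through the free $R$-ring before the TGWC, but that is the same calculation). Your second paragraph, however, supplies a genuine refinement that the paper elides: the naive multiplication $(a\otimes b)(a'\otimes b') = aa'\otimes bb'$ is indeed not well-defined on all of $A(t)\otimes_R A(t')$, precisely because commuting $r\in R$ past elements of $\Z^n$-degrees $g\neq g'$ produces the mismatched twists $\sigma^g(r)$ versus $\sigma^{g'}(r)$; restricting to the graded-diagonal subspace $\bigoplus_g A(t)_g\otimes_R A(t')_g$, where multiplication \emph{is} well-defined and where the image of $\Delta_{t,t'}$ visibly lands (since $r$ and $X_i^\pm\otimes X_i^\pm$ are diagonally homogeneous), is the correct reading and makes the statement precise. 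You are also right that $\Delta_{t,t't''}$ in the right-hand composition of (b) must be $\Delta_{t',t''}$ for type-correctness; that is a typo in the paper. The one place you are noticeably terser than the paper is the regularity check: the paper explicitly embeds $A(t)\otimes_R A(t')$ into the $\Frac(R)$-vector space $(\Frac(R)\otimes A(t))\otimes_{\Frac(R)}(\Frac(R)\otimes A(t'))$, whereas you assert it from torsion-freeness of the graded pieces; this is fine once one notes (as is implicit in both arguments) that the relevant graded pieces are rank-one, so the embedding does hold.
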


\begin{proof}
(a) Let $F$ be the free $R$-ring on $\{X_i^\pm\}_{i=1}^n$. By the universal property of free $R$-rings there exists a homomorphism
\[
\Delta:F\to A(t)\otimes_R A(t')
\]
uniquely determined by the conditions
\[
\Delta(X_i^\pm)=X_i^\pm\otimes X_i^\pm,\quad \Delta(r)=r\otimes 1=1\otimes r, r\in R.
\]
We have
\begin{align*}
\Delta\big(X_i^\pm r-\si_i^{\pm 1}(r) X_i^\pm\big) &= 
\Delta(X_i^\pm)\Delta(r)-\Delta\big(\si_i^{\pm 1}(r)\big) \Delta(X_i^\pm)\\
&= (X_i^\pm\otimes X_i^\pm) (r\otimes 1) - \big(\si_i^{\pm 1}(r)\otimes 1\big)(X_i^\pm\otimes X_i^\pm) \\
&= (X_i^\pm r - \si_i^{\pm 1}(r)X_i^\pm)\otimes X_i^\pm
= 0
\end{align*}

\begin{align*}
\Delta(X_i^\pm X_i^\mp - \si^{\pm 1/2}(t_it_i')\big)&=
\Delta(X_i^\pm)\Delta(X_i^\mp)-\Delta\big(\si_i^{\pm 1/2}(t_it_i')\big)\\
&= X_i^\pm X_i^\mp \otimes X_i^\pm X_i^\mp -\si_i^{\pm 1/2}(t_i)\si_i^{\pm 1/2}(t_i')\otimes 1 \\
&= \si_i^{\pm 1/2}(t_i)\otimes \si_i^{\pm 1/2}(t'_i) - \si_i^{\pm 1/2}(t_i)\otimes \si_i^{\pm 1/2}(t_i') 
= 0.
\end{align*}

Similarly one checks that 
\[ \Delta(X_i^+X_j^--X_j^-X_i^+)=0,\quad i\neq j. \]
This proves that $\Delta$ induces a homomorphism
\[
\tilde{\Delta}_{t,t'}:\tilde{A}(tt')\to A(t)\otimes_R A(t')
\]
where $\tilde{A}(tt')$ is the TGWC $\tilde{\CA}(R,\si,tt')$.
By Lemma \ref{lem:TGWAind}, it remains to show that $\tilde{\Delta}_{t,t'}(r)$ is regular for all nonzero $r\in R$. Since $A(t)$ and $A(t')$ are consistent TGWAs, they are torsion-free $R$-modules.
Localizing at the set of nonzero elements of $R$, $A(t)$ and $A(t')$ become vector spaces over the field of fractions $F$ of $R$. Thus $A(t)\otimes_R A(t')$ can be embedded into the $F$-vectorspace $(F\otimes A(t)\big)\otimes_F \big(F\otimes A(t')\big)$, on which any $r\in R$ obviously acts injectively.

(b) Since the maps involved are homomorphisms, it suffices to check that equality holds when each side is evaluated on the generators. Indeed, when evaluated at $X_i^\pm(tt't'')$ both sides become $X_i^\pm(t)\otimes X_i^\pm(t')\otimes X_i^\pm(t'')$. And at $r\in R$ both sides equal $r\otimes 1\otimes 1$ ($=1\otimes r\otimes 1=1\otimes 1\otimes r$).

(c) The argument is similar to part (b).
\end{proof}

\begin{Example}\label{exa:sl2}
Let $n=1$, $R=\C[z]$, $\si(z)=z-1$, then for any $k\in \C$ we have (see \cite{Bavula1992}) an isomorphism $A(z-k)\cong A_1(\C)= \C\langle x,\partial\rangle /(\partial x-x\partial-1)$ given by
$$X^+\mapsto x,\qquad  X^-\mapsto \partial ,\qquad z\mapsto 
\frac{1}{2}(x\partial+\partial x)+k.$$
For any $k,l\in\C$ such that $k-l\in\Z$, we have (see \cite{Bavula1992}) also an isomorphism $U(\Fsl_2)/(C-\la)\cong A((z-k)(z-l))$ given by
$$ e\mapsto \sqrt{-1}X^+,\qquad f\mapsto \sqrt{-1}X^-,\qquad h\mapsto 2z-(k+l)$$
where $C=ef+fe+h^2/2$ is the Casimir operator and $\la=\frac{1}{2}\big((k-l)^2+1\big)$.
Then \eqref{eq:tgwa-coprod} gives an algebra map 
\begin{equation}\label{eq:sl2toweyl}U(\Fsl_2)/(C-\la)\cong A((z-k)(z-l))\to A(z-k)\otimes_R A(z-l)\cong A_1(\C,x)\otimes_{R} A_1(\C,y)\end{equation}
where $A_1(\C,x)$ has generators $x,\partial_x$, and $A_1(\C,y)$ has generators $y,\partial_y$ and $R=\C[z]$ acts on $A_1(\C,x)$ (resp. $A_1(\C,y)$) by $z\mapsto\frac{1}{2}(x\partial_x+\partial_x x)+k$ (resp. $z\mapsto\frac{1}{2}(y\partial_y+\partial_y y)+l$). 

The map \eqref{eq:sl2toweyl} is given on the generators by
\begin{equation}\label{eq:sl2act} e\mapsto \sqrt{-1}x\otimes y,\qquad f\mapsto \sqrt{-1}\partial_x\otimes \partial_y,\qquad h\mapsto \frac{1}{2}(x\partial_x+\partial_x x) \otimes 1+1\otimes\frac{1}{2}(y\partial_y+\partial_y y). \end{equation}
\end{Example}


\begin{Definition}
If $A$ is an $R$-ring, an $A$-module $M$ is called a  \emph{weight module} if
\[
M=\bigoplus_{\Fm\in\MaxSpec(R)} M_\Fm,\qquad
M_\Fm=\{v\in M\mid \Fm v=0\}.
\]
For a weight module $M$, we define the \emph{support} of $M$ to be
$$ \Supp(M):=\{\Fm\in\MaxSpec(R)~|~M_\Fm\neq 0\}.$$
For $t\in\Omega$, we let $A(t)\text{-wmod}$ be the category of weight modules $M$ for $A(t)$ such that $M_\Fm$ is a finite dimensional $R/\Fm$ vector space for all $\Fm\in\MaxSpec(R)$.
\end{Definition}


\begin{Remark}Since the automorphisms $\{\si_i\}_{i=1}^n$ commute, the group $\Z^n$ acts on $\MaxSpec(R)$ by $(g_1,\ldots,g_n).\Fm = \si_1^{g_1}\cdots\si_n^{g_n}(\Fm)$.
For each $\Z^n$-orbit $\mathcal{O}$ in $\MaxSpec(R)$ there is a corresponding full subcategory of weight modules whose support is contained in $\mathcal{O}$. We denote this subcategory by $A(t)\text{-wmod}_\mathcal{O}$.
\end{Remark}

We have the following result about the category of weight modules, which is mostly well-known. 

\begin{Theorem}\label{thm:wmodO}
Let $A(t)=\CA(R,\si,t)$ be a consistent TGWA, 
\begin{enumerate}[{\rm (a)}]
\item ${\displaystyle A(t)\text{-wmod}\simeq \prod_{\mathcal{O}\in\MaxSpec(R)/\Z^n}A(t)\text{-wmod}_\mathcal{O}}$.
\item $A(t)\text{-wmod}_\mathcal{O} \simeq A_{\mathcal{O}}(\bar{t})\text{-wmod}_\mathcal{O}$ where $A_{\mathcal{O}}(\bar{t})$ is the TGWA $\CA(R_\CO,\bar\si,\bar t)$ where $R_\CO$ is the localization $S_\CO^{-1}R$, $S_\CO=R\setminus\cup_{\Fm\in\CO}\Fm$, $\bar\si_i$ are the induced automorphisms and $\bar t_i=1^{-1} t_i$.
\item If $\al$ is an automorphism of $R$ which commutes with all $\si_i$ then
$A(t)\text{-wmod}_\mathcal{O}\simeq \CA(R,\si,\al(t))\text{-wmod}_{\alpha(\mathcal{O})}$.
\item If, for all $i$, $\overline{\si_i^{1/2}(t_i)}$ is invertible in $S_\CO^{-1}R$ and $\CO$ is a torsion-free orbit, then the category $A(t)\text{-wmod}_\mathcal{O}$ is equivalent to the category of finite-dimensional vector spaces (i.e. it is semisimple with a unique simple object up to isomorphism). Moreover, the unique simple object is isomorphic, as an $R$-module, to $\bigoplus_{\Fm\in\CO}R/\Fm$.
\end{enumerate}
\end{Theorem}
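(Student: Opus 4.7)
The plan is to establish the four parts in order, all of them resting on a single observation: the relation $X_i^\pm r = \si_i^{\pm 1}(r) X_i^\pm$ in \eqref{eq:TGWA-Rels} implies that, for any weight module $M$, the operator $X_i^\pm$ carries $M_\Fm$ into $M_{\si_i^{\pm 1}(\Fm)}$. Indeed, if $s=\si_i^{\pm 1}(r)$ with $r\in\Fm$ and $v\in M_\Fm$, one has $sX_i^\pm v = X_i^\pm r v = 0$. Consequently the support of any weight module is stable under the $\Z^n$-action on $\MaxSpec(R)$. Part (a) is then immediate: $M=\bigoplus_\CO M^\CO$ with $M^\CO := \bigoplus_{\Fm\in\CO} M_\Fm$ an $A(t)$-submodule, and $\Hom_{A(t)}(M^\CO, N^{\CO'})=0$ whenever $\CO\neq \CO'$, which gives the product decomposition of categories.

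For part (b), given $M\in A(t)\text{-wmod}_\CO$ and $s\in S_\CO = R\setminus\bigcup_{\Fm\in\CO}\Fm$, the element $s$ lies outside every $\Fm\in\CO$ and so acts as multiplication by a nonzero scalar on the finite-dimensional $R/\Fm$-space $M_\Fm$. Hence $S_\CO$ acts invertibly on $M$, the $R$-action extends uniquely to an $R_\CO$-action, and together with the unchanged $X_i^\pm$-action this turns $M$ into an $A_\CO(\bar t)$-weight module with the same support; restriction of scalars along $R\to R_\CO$ is the inverse equivalence. For part (c), the commutation hypothesis on $\al$ means the assignment $r\mapsto \al(r)$, $X_i^\pm\mapsto X_i^\pm$ defines an isomorphism of the TGWCs $\tilde{\CA}(R,\si,t)\to \tilde{\CA}(R,\si,\al(t))$; since it sends regular elements of $R$ to regular elements, Lemma \ref{lem:TGWAind} lets it descend to an isomorphism of the TGWAs, and pullback along it carries $A(t)\text{-wmod}_\CO$ onto $\CA(R,\si,\al(t))\text{-wmod}_{\al(\CO)}$.

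For part (d), the hypothesis on $R_\CO$ makes both $\si_i^{\pm 1/2}(t_i)$ invertible, so the relations $X_i^+X_i^- = \si_i^{1/2}(t_i)$ and $X_i^-X_i^+ = \si_i^{-1/2}(t_i)$ force each $X_i^\pm$ to act as an invertible $\K$-linear map $M_\Fm \to M_{\si_i^{\pm 1}(\Fm)}$. Fixing a basepoint $\Fm_0\in\CO$, torsion-freeness of the orbit means every $\Fm\in\CO$ has a unique expression as $\si_1^{g_1}\cdots \si_n^{g_n}(\Fm_0)$, and $M$ is recovered from $M_{\Fm_0}$ by transporting it along the corresponding product of invertible $X_i^\pm$'s. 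This yields the equivalence with finite-dimensional $R/\Fm_0$-vector spaces, and a one-dimensional $M_{\Fm_0}\cong R/\Fm_0$ reconstructs to $\bigoplus_{\Fm\in\CO}R/\Fm$ as an $R$-module. The main obstacle is coherence of the reconstruction: one must verify that any two product expressions for the operator $M_{\Fm_0}\to M_\Fm$ yield the same map. The commutation $[X_i^+, X_j^-]=0$ for $i\neq j$, together with the consistency identities \eqref{eq:cons1}-\eqref{eq:cons2} and the invertibility of the $\si_i^{\pm 1/2}(t_i)$, handles this; torsion-freeness rules out any nontrivial loops and guarantees that the resulting functor is unambiguously defined on the whole category.
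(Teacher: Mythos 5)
Your proposal is essentially correct in strategy for all four parts and, unlike the paper --- which simply cites \cite{MT1999}, \cite{FutHar2012}, and \cite{MPT2003} for parts (a)--(d) --- you supply the arguments directly. The basic observation (that $X_i^\pm$ maps $M_\Fm$ into $M_{\si_i^{\pm 1}(\Fm)}$) is exactly the right engine, and parts (a)--(c) are handled adequately: in (a) you correctly deduce the orbit decomposition and vanishing of morphisms between distinct orbit components; in (b) you observe that $S_\CO$ already acts invertibly on any module supported on $\CO$, so the functor $M\mapsto S_\CO^{-1}M$ of the paper is the identity on underlying sets and the equivalence is immediate; in (c) your isomorphism $\tilde{\CA}(R,\si,t)\to\tilde{\CA}(R,\si,\al(t))$ is the right map, though note it needs $\al$ to commute with each $\si_i^{1/2}$ (not merely $\si_i=(\si_i^{1/2})^2$); with the paper's conventions this is the intended reading, but it is worth flagging. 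Also, pullback along $\psi$ runs from $\CA(R,\si,\al(t))\text{-wmod}_{\al(\CO)}$ to $A(t)\text{-wmod}_\CO$, not the other way as you wrote, but since $\psi$ is invertible this is a typo rather than a gap.

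The genuine weakness is in part (d). You correctly identify coherence of the transport $M_{\Fm_0}\to M_\Fm$ as the crux, but the last two sentences amount to asserting the result rather than proving it. First, the statement ``any two product expressions for the operator $M_{\Fm_0}\to M_\Fm$ yield the same map'' is not literally what one needs or what holds; two degree-matched words in the $X_i^\pm$ differ by an element of the degree-zero part of $A(t)$, which acts on $M_{\Fm_0}$ by a (possibly nontrivial) scalar determined by $t$ and $\Fm_0$. One must show (i) that the degree-zero part is $R$ so this element is a scalar at all, and (ii) that these scalars can be chosen coherently when reconstructing a module from a vector space, i.e., that the system of scalar equations imposed by $X_i^+X_i^-=\si_i^{1/2}(t_i)$, $X_i^-X_i^+=\si_i^{-1/2}(t_i)$, and $[X_i^+,X_j^-]=0$ over the entire torsion-free orbit is solvable. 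This is precisely the content of the result in \cite{MPT2003} that the paper cites; it is not a formal consequence of simply listing the relations and the consistency equations, and essential surjectivity of the functor $M\mapsto M_{\Fm_0}$ (hence the existence and uniqueness of the simple object, and semisimplicity of the category) rests on it. So your sketch of (d) has the right outline but leaves the substantive step --- the one the cited reference actually performs --- unproved.
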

\begin{proof}
(a) This is well-known, see e.g. \cite{MT1999}.

(b) By the localization results in \cite{FutHar2012} we have $S_\CO^{-1}\CA(R,\si,t)\cong \CA(S_\CO^{-1}R,\bar \si, \bar t)$. Using that $\MaxSpec(R_\CO)=\CO$, it is easy to check that the functor $A(t)\text{-wmod}_\CO\to A_\CO(\bar{t})\text{-wmod}_\CO$ given by $M\mapsto S_\CO^{-1}M$ is an equivalence of categories.

(c) By \cite{FutHar2012}, the isomorphism $\al:R_{\CO}\to R_{\al(\CO)}$ lifts to an isomorphism $\CA(R_\CO,\bar \si, \bar t)\to \CA(R_{\al(\CO)},\bar\si,\al(\bar t))$.

(d) This is proved in \cite{MPT2003}, notice that the $\si_i^{1/2}$ are appearing here because of our choice of using the symmetric version of the relations and consistency equations. 
\end{proof}

\begin{Lemma}\label{lemma:weightmod}
If $M$ is a weight module for $A(t)$ and $M'$ is a weight module for $A(t')$, then $M\otimes_R M'$ (defined as in \eqref{eq:Rtensor} is a weight module for $A(t)\otimes_R A(t')$ via the action $(a\otimes a')\cdot (v\otimes v')=av\otimes a'v'$. In particular we have $(M\otimes_R M')_\Fm=M_\Fm\otimes_R M'_\Fm$ for all $\Fm\in\MaxSpec(R)$ and $\Supp(M\otimes_R M')=\Supp(M)\cap \Supp(M')$. 
\end{Lemma}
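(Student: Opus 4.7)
The plan is to verify (i) that the formula $(a \otimes a')(v \otimes v') = av \otimes a'v'$ gives a well-defined module action, and then (ii) to establish the weight decomposition and support equality from the tensor product decomposition of $M \otimes_R M'$.

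For (i), I would check compatibility of the action with the two defining relations in play. The relation $(ra) \otimes a' = a \otimes (ra')$ in $A(t) \otimes_R A(t')$ requires $r(av) \otimes a'v' = av \otimes r(a'v')$, which is precisely the defining relation in $M \otimes_R M'$. The relation $rv \otimes v' = v \otimes rv'$ in $M \otimes_R M'$ requires $a(rv) \otimes a'v' = av \otimes a'(rv')$; using the $\Z^n$-grading on TGWAs (with $\deg X_i^\pm = \pm e_i$ and $\deg r = 0$ for $r \in R$), a homogeneous $a$ of degree $g$ satisfies $ar = \sigma^g(r)\,a$ with $\sigma^g := \sigma_1^{g_1}\cdots\sigma_n^{g_n}$, so for \emph{diagonal} elements $a \otimes a'$ of matching degree---which covers the image of $\Delta$ and is where the algebra structure on $A(t) \otimes_R A(t')$ is compatible with its $R$-module structure---both sides reduce to $\sigma^g(r)\,av \otimes a'v'$. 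Verifying that the three TGWA relations \eqref{eq:TGWA-Rels} are respected by the action is then essentially the same computation as in the proof of Theorem \ref{thm:coprod}.

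For (ii), writing $M = \bigoplus_\mathfrak{m} M_\mathfrak{m}$ and $M' = \bigoplus_\mathfrak{n} M'_\mathfrak{n}$, additivity of $\otimes_R$ gives $M \otimes_R M' = \bigoplus_{\mathfrak{m},\mathfrak{n}} M_\mathfrak{m} \otimes_R M'_\mathfrak{n}$. The cross terms vanish: for distinct maximal ideals $\mathfrak{m} \neq \mathfrak{n}$, pick $r \in \mathfrak{m} \setminus \mathfrak{n}$. Then $rv = 0$ for $v \in M_\mathfrak{m}$, so $v \otimes rv' = rv \otimes v' = 0$ in $M_\mathfrak{m} \otimes_R M'_\mathfrak{n}$; since $r$ is a unit in the field $R/\mathfrak{n}$ it acts invertibly on the $R/\mathfrak{n}$-vector space $M'_\mathfrak{n}$, so every $w' \in M'_\mathfrak{n}$ has the form $rv'$ and hence $v \otimes w' = 0$. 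The diagonal summand $M_\mathfrak{m} \otimes_R M'_\mathfrak{m}$ is annihilated by $\mathfrak{m}$, so identifies with $M_\mathfrak{m} \otimes_{R/\mathfrak{m}} M'_\mathfrak{m}$, a finite-dimensional $R/\mathfrak{m}$-vector space sitting inside $(M \otimes_R M')_\mathfrak{m}$. The reverse inclusion follows from the same unit argument applied to each off-diagonal summand, yielding $(M \otimes_R M')_\mathfrak{m} = M_\mathfrak{m} \otimes_R M'_\mathfrak{m}$; the support equality $\Supp(M \otimes_R M') = \Supp(M) \cap \Supp(M')$ is then immediate since a tensor product of nonzero vector spaces over a field is nonzero.

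The main delicate point is the well-definedness of the action with respect to the relation $rv \otimes v' = v \otimes rv'$ in $M \otimes_R M'$, which forces one to restrict to the diagonal part of $A(t) \otimes_R A(t')$ and uses the $\Z^n$-graded structure of TGWAs in an essential way. The rest is a routine consequence of the standard fact that $M_\mathfrak{m} \otimes_R M'_\mathfrak{n} = 0$ for modules annihilated by distinct maximal ideals.
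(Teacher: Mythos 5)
Your proposal is correct and follows the same skeleton as the paper's proof: verify the action is well-defined, then deduce the weight decomposition by showing the cross-terms $M_\Fm\otimes_R M'_{\Fn}$ vanish for $\Fm\neq\Fn$. The latter argument is essentially identical to the paper's (you pick $r\in\Fm\setminus\Fn$ and exploit that $r$ kills $M_\Fm$ while acting invertibly on $M'_\Fn$; the paper picks $a\in\Fm'\setminus\Fm$ and runs the symmetric computation). Where you genuinely differ, and where you are more careful, is in the well-definedness check. The paper's proof only verifies that the two-sided ideal presenting $A(t)\otimes_R A(t')$ as a quotient of $A(t)\otimes_\Z A(t')$ acts as zero on $M\otimes_R M'$; it never checks that the formula $(a\otimes a')\cdot(v\otimes v')=av\otimes a'v'$ descends from $M\otimes_\Z M'$ to $M\otimes_R M'$, i.e.\ that it respects the relations $rv\otimes v'=v\otimes rv'$. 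You address exactly this point and correctly observe that it goes through only for $a\otimes a'$ of matching $\Z^n$-degree: in $M\otimes_R M'$ one has $a(rv)\otimes a'v'=\sigma^{\deg a}(r)\,(av)\otimes(a'v')$ while $av\otimes a'(rv')=\sigma^{\deg a'}(r)\,(av)\otimes(a'v')$, and these agree only when $\deg a=\deg a'$. The discrepancy is genuine: taking $v\in M_\Fm$, $v'\in M'_{\Fn}$ with $\Fm\neq\Fn$ gives $v\otimes v'=0$ in $M\otimes_R M'$, yet if $\deg a\neq\deg a'$ one can arrange that $av$ and $a'v'$ land in a common weight space, so that $av\otimes a'v'\neq 0$. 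Thus the full $A(t)\otimes_R A(t')$-action does not literally exist, but the action of the diagonal subalgebra generated by $r\otimes 1$ and $X_i^\pm\otimes X_i^\pm$ does, and since $\Delta_{t,t'}$ lands there, this is all the paper ever uses. Your version therefore repairs an imprecision in the published argument at no cost to the downstream results.
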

\begin{proof}In order to show that the action is well defined, we need to check that if $c\otimes d\in A(t)\otimes_R A(t')$ is in the two sided ideal generated by all elements of the form $(ra)\otimes b - a\otimes (rb)$, $r\in R$, then it acts as zero on $M\otimes_R M'$. Let $c\otimes d=\sum_i (c_i\otimes c_i')((ra_i)\otimes b_i - a_i\otimes (rb_i))(d_i\otimes d_i')$, then
\begin{align*}
(c\otimes d)\cdot (v\otimes v') &= \sum_i (c_i\otimes c_i')((ra_i)\otimes b_i - a_i\otimes (rb_i))(d_i\otimes d_i')(v\otimes v') \\
&=  \sum_i (c_i\otimes c_i')((ra_i)\otimes b_i - a_i\otimes (rb_i))(d_iv\otimes d_i'v') \\
&= \sum_i (c_i\otimes c_i')(ra_id_iv\otimes b_id_i'v' - a_id_iv\otimes rb_id_i'v') \\
&= \sum_i(c_i\otimes c_i')(0) \\
&= 0.
\end{align*}
Now suppose $\Fm\neq \Fm'$ and let $v\otimes v'\in M_\Fm\otimes_R M'_{\Fm'}$. Let $a\in\Fm'\setminus \Fm$, then there exists $b\in R$ such that $ba=1+m$, with $m\in\Fm$. We have then
\begin{align*}
(ba\otimes 1)(v\otimes v')&=(1\otimes ba)(v\otimes v') \\
(ba)v\otimes v' &= v\otimes (ba)v' \\
(1+m)v\otimes v'&= v\otimes b(av') \\
v\otimes v' + (mv)\otimes v' &= 0 \\
v\otimes v' +0 &= 0 \\
v\otimes v'=0.
\end{align*}
On the other hand, if $v\otimes v'\in M_\Fm\otimes_R M'_\Fm$, then clearly for all $r\in\Fm$ we have $(r\otimes 1)(v\otimes v')=(1\otimes r)(v\otimes v')=0$.
It follows that 
\begin{align*}
M\otimes_R M'&=\left(\bigoplus_\Fm M_\Fm\right)\otimes_R \left(\bigoplus_{\Fm'} M'_{\Fm'}\right)\\
&= \bigoplus_{\Fm,\Fm'}M_\Fm\otimes_RM'_{\Fm'} \\
&= \bigoplus_{\Fm} M_\Fm\otimes_R M'_{\Fm}
\end{align*}
is indeed the weight space decomposition.
\end{proof}
We let $K_0(A(t)\text{-wmod})$ be the \emph{Grothendieck group} of the category $A(t)\text{-wmod}$. More precisely, we define
$$ K_0(A(t)\text{-wmod}):=\Z \langle [M]~|~M\in A(t)\text{-wmod}\rangle /\sim$$
where we quotient by the abelian subgroup generated by $[M'']-[M]-[M']$ for all short exact sequences $0
\to M \to M''\to M'\to 0$ in $A(t)\text{-wmod}$.

Analogously, we define $K_0^{\text{split}}(A(t)\text{-wmod})$ to be the \emph{split Grothendieck group} of the category $A(t)\text{-wmod}$, that is
$$ K_0^{\text{split}}(A(t)\text{-wmod}):=\Z \langle [M]~|~M\in A(t)\text{-wmod}\rangle /\sim$$
here we quotient by the abelian subgroup generated by $[M\oplus M']-[M]-[M']$ for all $M,M'\in A(t)\text{-wmod}$.




\begin{Proposition}
The map \eqref{eq:tgwa-coprod} induces maps
\begin{equation}\label{eq:k0m}K_0(A(t)\text{-wmod})\otimes_\Z K_0(A(t')\text{-wmod})\to K_0(A(tt')\text{-wmod})\end{equation}
\begin{equation}\label{eq:k0m-sp}K_0^{\text{split}}(A(t)\text{-wmod})\otimes_\Z K_0^{\text{split}}(A(t')\text{-wmod})\to K_0^{\text{split}}(A(tt')\text{-wmod})\end{equation}
both given by
$$ [M]\otimes [M']\mapsto [M\otimes_{R} M']$$
where $M\otimes_{R} M'$ is seen as a module for $A(tt')$ by restriction under the map \eqref{eq:tgwa-coprod}. (By Lemma \ref{lemma:weightmod} the weight spaces of $M\otimes_{R} M'$ are finite dimensional.)
\end{Proposition}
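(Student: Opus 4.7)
The plan is to verify three things: that $M \otimes_R M'$ genuinely lies in $A(tt')\text{-wmod}$, that the assignment is $\Z$-bilinear on isomorphism classes, and that it respects the relations defining each of the two Grothendieck groups. The first is immediate from Lemma \ref{lemma:weightmod}: that result gives $M \otimes_R M'$ the structure of a weight module for $A(t) \otimes_R A(t')$, which becomes an $A(tt')$-module by restriction along $\Delta_{t,t'}$. Moreover $(M \otimes_R M')_\Fm = M_\Fm \otimes_R M'_\Fm$, and since both factors are annihilated by $\Fm$ this equals $M_\Fm \otimes_{R/\Fm} M'_\Fm$, the tensor product of two finite-dimensional $R/\Fm$-vector spaces, hence finite-dimensional.

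For the split version \eqref{eq:k0m-sp}, the natural isomorphisms $(M_1 \oplus M_2) \otimes_R M' \cong (M_1 \otimes_R M') \oplus (M_2 \otimes_R M')$ and the corresponding one in the second argument are all that is required: they translate both the $\Z$-bilinearity (which amounts to compatibility with the split relations in each variable) and the desired identity directly into valid relations in $K_0^{\text{split}}(A(tt')\text{-wmod})$.

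For the non-split version \eqref{eq:k0m}, I would show that $- \otimes_R M'$ and $M \otimes_R -$ each carry short exact sequences of weight modules to short exact sequences of weight modules; this simultaneously yields the $\Z$-bilinearity on $K_0$ and the compatibility with the short-exact-sequence relations in $A(tt')\text{-wmod}$. Given an exact sequence $0 \to M_1 \to M \to M_2 \to 0$ in $A(t)\text{-wmod}$, Lemma \ref{lemma:weightmod} decomposes each side into weight spaces, so exactness reduces to the exactness, for each $\Fm\in\MaxSpec(R)$, of
\begin{equation*}
0 \to (M_1)_\Fm \otimes_{R/\Fm} M'_\Fm \to M_\Fm \otimes_{R/\Fm} M'_\Fm \to (M_2)_\Fm \otimes_{R/\Fm} M'_\Fm \to 0,
\end{equation*}
which holds because $M'_\Fm$ is a free (in particular flat) $R/\Fm$-module. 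The only substantive step is this reduction from $R$-tensor products of weight modules to vector-space tensor products over the residue fields; everything else is formal.
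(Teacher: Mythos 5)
Your proof is correct and takes essentially the same approach as the paper: the split case via distributivity of tensor over direct sum, and the non-split case by reducing exactness of $-\otimes_R M'$ to a weight-space-by-weight-space check, where Lemma \ref{lemma:weightmod} identifies $(M\otimes_R M')_\Fm$ with $M_\Fm\otimes_R M'_\Fm = M_\Fm\otimes_{R/\Fm} M'_\Fm$ and flatness over the field $R/\Fm$ finishes the argument.
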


\begin{proof}
That the map is well defined on the split Grothendieck groups is clear because 
$$(M_1\oplus M_2)\otimes_R M'=(M_1\otimes_RM')\oplus (M_2\otimes_R M')$$ for all $M_1,M_2\in A(t)\text{-wmod}$ and $M'\in A(t')\text{-wmod}$.

For the case of the Grothendieck groups, we need to show that if $0\to M\stackrel{i}{\to} S\stackrel{p}\to N\to 0$ is a short exact sequence in $A(t)\text{-wmod}$, then, for all $M'\in A(t')\text{-wmod}$, 
\begin{equation}\label{eq:ses}
0\to M\otimes_R M'\stackrel{i\otimes \Id_{M'}}{\longrightarrow}S\otimes_R M'\stackrel{p\otimes \Id_{M'}}{\longrightarrow}N\otimes_R M'\to 0
\end{equation}
is a short exact sequence in $A(tt')\text{-wmod}$.

Suppose  then that $0\to M\stackrel{i}{\to} S\stackrel{p}\to N\to 0$ is a short exact sequence of weight modules for $A(t)$, in particular $i$ and $p$ are maps of $R$-modules, so for all $\Fm\in\MaxSpec(R)$ we can define $i_\Fm:M_\Fm\to S_\Fm$, and $p_\Fm:S_\Fm\to N_\Fm$. We then have that, for all $\Fm\in\MaxSpec(R)$, 
$$0\to M_\Fm\stackrel{i_\Fm}{\to} S_\Fm\stackrel{p_\Fm}\to N_\Fm\to 0$$
is a short exact sequence of $R$-modules. Since all the modules involved in the sequence are annihilated by $\Fm$, this is also a short exact sequence of $R/\Fm$-modules. Now consider the sequence
\begin{equation}\label{eq:sesmm}
0\to M_\Fm \otimes_R M'\stackrel{i_\Fm\otimes \Id_{M'}}{\longrightarrow}S_\Fm\otimes_R M'\stackrel{p_\Fm\otimes \Id_{M'}}{\longrightarrow}N_\Fm\otimes_R M'\to 0
\end{equation}
which by Lemma \ref{lemma:weightmod} is the same as the following sequence of $R$-modules
\begin{equation}\label{eq:sesmm1}0\to M_\Fm \otimes_R M'_\Fm \stackrel{i_\Fm\otimes \Id_{M'}}{\longrightarrow}S_\Fm\otimes_R M'_\Fm\stackrel{p_\Fm\otimes \Id_{M'}}{\longrightarrow}N_\Fm\otimes_R M'_\Fm\to 0.\end{equation}
Again, we can also consider \eqref{eq:sesmm1} as a sequence of $R/\Fm$-modules, and, since $R/\Fm$ is a field, $M'_\Fm$ is free over $R/\Fm$, hence the sequence \eqref{eq:sesmm1}, which is the same as \eqref{eq:sesmm}, is exact.
Since \eqref{eq:sesmm} is a short exact sequence of $R/\Fm$-modules and $R$-modules, by taking the direct sum over all $\Fm$, we get that \eqref{eq:ses} is a short exact sequence of $R$-modules. But the maps in \eqref{eq:ses} are actually maps of $A(t)\otimes_R A(t')$-modules and maps of $A(tt')$-modules, so the statement is proved.
\end{proof}
\begin{Example}
Using the map from Example \ref{exa:sl2}, we get that if $M_1$, $M_2$ are weight modules for the Weyl algebra $A_1(\C)$, and we fix two identifications $A(z-\ell)\cong A_1(\C)\cong A(z-k)$, then there is a canonical structure of $U(\Fsl_2)$-module on $M_1\otimes_{\C[z]} M_2$. 
We take $k-l\in\Z_{\geq 0}$, $A(z-k)\simeq A_1(\C,x)$, $A(z-l)\simeq A_1(\C,y)$. We let $M_x^-\simeq  A_1(\C,x)/A_1(\C,x)x$, $M_y^+\simeq A_1(\C,y)/A_1(\C,y)\partial_y$, $M_y^-\simeq A_1(\C,y)/A_1(\C,y)y$. 

Then $M_x^-= \bigoplus_{s\geq 0}\C \partial_x^s$ is a simple weight module, with $\C\partial_x^s=(M_x^-)_{(z-(k-1/2-s))}$. Analogously, 
$$M_y^-=\bigoplus_{s\geq 0}(M_y^-)_{(z-(l-1/2-s))}=\bigoplus_{s\geq 0}\C\partial_y^s,\qquad M_y^+=\bigoplus_{s\geq 0}(M_y^+)_{(z-(l+1/2+s))}=\bigoplus_{s\geq 0}\C y^s.$$

Then, we have the weight space decomposition 
$$M_x^-\otimes_{\C[z]}M_y^+=\bigoplus_{s=0}^{k-l-1}\C \partial_x^s\otimes y^{k-l-1-s}$$
which is nonzero if $k-l>0$. When $k-l>0$, let $v_s=(\sqrt{-1})^s \partial_x^s\otimes y^{k-l-1-s}$. We have that by \eqref{eq:sl2act}, the action of the generators of $\Fsl_2$ is given by
\begin{align*} e\cdot v_s&= \sqrt{-1}(x\otimes y)(\sqrt{-1})^s \partial_x^s\otimes y^{k-l-1-s}\\
&=(\sqrt{-1})^{s+1}x\partial_x^s\otimes yy^{k-l-1-s} \\
&=(-1)(\sqrt{-1})^{s-1} (-s)\partial_x^{s-1}\otimes y^{k-l-1-(s-1)}\\
&=s v_{s-1}
\end{align*}
\begin{align*} f\cdot v_s&= \sqrt{-1}(\partial_x\otimes \partial_y)(\sqrt{-1})^s \partial_x^s\otimes y^{k-l-1-s}\\
&=(\sqrt{-1})^{s+1}\partial_x\partial_x^s\otimes \partial_yy^{k-l-1-s} \\
&=(\sqrt{-1})^{s+1}\partial_x^{s+1}\otimes (k-l-1-s)y^{k-l-1-(s+1)}\\
&=(k-l-1-s) v_{s+1}
\end{align*}
\begin{align*}
h\cdot v_s&= \left(\frac{1}{2}(x\partial_x+\partial_x x) \otimes 1+1\otimes\frac{1}{2}(y\partial_y+\partial_y y)\right)(\sqrt{-1})^s \partial_x^s\otimes y^{k-l-1-s}\\
&= \frac{1}{2}(\sqrt{-1})^s(x\partial_x+\partial_x x)\partial_x^s\otimes  y^{k-l-1-s}+\frac{1}{2}(\sqrt{-1})^s \partial_x^s\otimes (y\partial_y+\partial_y y)y^{k-l-1-s} \\
&= \frac{1}{2}(\sqrt{-1})^s\left((-2s-1)\partial_x^s\otimes y^{k-l-1-s}+(2(k-l-1-s)+1)\partial_x^s\otimes y^{k-l-1-s}\right)\\
&= (k-l-1-2s)(\sqrt{-1})^s\partial_x^s\otimes y^{k-l-1-s}\\
&=(k-l-1-2s)v_s.
\end{align*}
Hence $M_x^-\otimes_{\C[z]}M_y^+$ is isomorphic to the irreducible finite dimensional representation of $\Fsl_2$ with highest weight $k-l-1$.
With a similar computation, it can be checked that for $k-l\geq 0$, $M_x^-\otimes_{\C[z]}M_y^-=\bigoplus_{s\geq 0}\C \partial_x^{k-l+s}\otimes\partial_y^s$ is isomorphic to the irreducible Verma module with highest weight $l-k-1$. In fact, we can also obtain nonirreducible Verma modules as tensor products in a similar way, see Proposition \ref{prp:tensor-decomposition-of-indecomposables}.
\end{Example}
\begin{Remark}
It would be tempting to restrict ourselves to the subcategory of $A(t)\text{-wmod}$ consisting of modules of finite length, unfortunately the tensor product of two finite length modules need not be finite length in general, as the next example shows. It is however true that, for many choices of $(R,\sigma)$, finite length modules will be closed under taking tensor products.
\end{Remark}

\begin{Example}
We provide an example of $(R,\si)$ in which the tensor product of two simple modules need not have finite length.

Let $R$ be the algebra of entire functions in the complex plane. Put $\boldsymbol{i}=\sqrt{-1}$. Let $n=2$ and define $\si_1\big(f(z)\big)=f(z+1)$ and $\si_2\big(f(z)\big)=f(z+\boldsymbol{i})$.
Pick an entire function $\zeta:\C\to\C$ with zero set equal to $\Z$. For example one can take $\zeta(z)=\exp(2\pi\boldsymbol{i}z)-1$.
Let
\begin{equation}
t_1(z)=\zeta\big(\frac{z+1/2}{1+\boldsymbol{i}}\big),\quad
t_2(z)=\zeta\big(\frac{z-\boldsymbol{i}/2}{1+\boldsymbol{i}}\big).
\end{equation}
Then $t=(t_1,t_2)$ solve the consistency equations \eqref{eq:cons1}.
Furthermore, let
\begin{equation}
s_1(z)=\zeta\big(\frac{z-1/2}{1+\boldsymbol{i}}\big),\quad
s_2(z)=\zeta\big(\frac{z+\boldsymbol{i}/2}{1+\boldsymbol{i}}\big).
\end{equation}
Then $s=(s_1,s_2)$ is another solution to \eqref{eq:cons1}.

For $(a,b)\in\Z^2$, let $\Fm_{(a,b)}$ be the maximal ideal $\big(z-(a+b\boldsymbol{i})\big)$ of $R$, and consider the integral orbit $\mathcal{O}=\{\Fm_{(a,b)}=\big(z-(a+b\boldsymbol{i})\big)\mid (a,b)\in\Z^2\}$ which is torsion-free.

Since $R$ is a domain and $\Z^2$ acts faithfully on $R$ (since $\si_1^k\si_2^l(z)=z+(k+\boldsymbol{i}l)$), it follows by \cite[Thm.~5.1]{HarOin} that $R$ is maximal commutative in any $A(f)$ for any solution $f=(f_1,f_2)$, (with $f_i\neq 0$), to the consistency equations \eqref{eq:cons1}.

Therefore, according to the main results of \cite{H2006} (more clearly explained in  \cite[Sec.~3.5]{H2018}), there is a bijection between the set of isomorphism classes of simple weight $A(f)$-modules and connected components of $\Specm(R)$, where connectedness $\sim$ is defined to be the smallest equivalence relation such that $\si_i^{-1/2}(\Fm)\sim\si_i^{1/2}(\Fm)$ if $f_i\notin\Fm$ for all $i$.
Note that different orbits are always disconnected. 

The orbit $\mathcal{O}$ under consideration here has two connected components with respect to $t$, and two with respect to $s$:
\[
\mathcal{O}=\mathcal{O}^+_t\sqcup \mathcal{O}^-_t,\qquad \mathcal{O}^+_t=\{\Fm_{a,b}\mid a< b\},\quad \mathcal{O}^-_t=\{\Fm_{a,b}\mid a\ge b\}
\]
\[
\mathcal{O}=\mathcal{O}^+_s\sqcup \mathcal{O}^-_s,\qquad \mathcal{O}^+_s=\{\Fm_{a,b}\mid a\le b\},\quad \mathcal{O}^-_s=\{\Fm_{a,b}\mid a> b\}
\]
On the other hand, $\mathcal{O}$ has infinitely many connected components with respect to $ts$:
\[
\mathcal{O}=\mathcal{O}^+_{ts}\sqcup\big(\bigsqcup_{a\in\Z}\mathcal{O}_{ts}^a\big)\cup \mathcal{O}^-_{ts},\qquad \mathcal{O}^+_{ts}=\{\Fm_{a,b}\mid a<b\},\quad \mathcal{O}^-_{ts}=\{\Fm_{a,b}\mid a> b\},
\quad \mathcal{O}^a_{ts}=\{\Fm_{a,a}\}.
\]
Thus $A(t)\textrm{-wmod}_{\mathcal{O}}$ has exactly two simple weight modules $M^\pm$ and similarly $A(s)\textrm{-wmod}_{\mathcal{O}}$ has two simple weight modules $N^\pm$.
But the $A(ts)$-module $M^-\otimes N^+$ does not have finite length, as it is the direct sum of countably infinitely many one-dimensional simple modules.
\end{Example}

\begin{Definition}
Let $\Gamma\subset \Omega$ be a multiplicatively closed subset, then we can define two $\Gamma$-graded $\C$-algebras
$$\mathscr{A}(\Gamma)=\bigoplus_{t\in\Gamma} \C\otimes_{\Z}K_0(A(t)\text{-wmod})\qquad\qquad \mathscr{A}^{\text{split}}(\Gamma)=\bigoplus_{t\in\Gamma} \C\otimes_{\Z}K_0^{\text{split}}(A(t)\text{-wmod})$$
with multiplication given by the map \eqref{eq:k0m}.
\end{Definition}
\begin{Remark}
By Theorem \ref{thm:coprod}(b)(c), multiplication in $\mathscr{A}(\Gamma)$ and $\mathscr{A}^{\text{split}}(\Gamma)$ is associative and commutative. 
\end{Remark}
\begin{Proposition}\label{prop:direct-sum}We have the following direct products of $\C$-algebras:
\begin{equation}\label{eq:directsum} \mathscr{A}(\Gamma)=\prod_{\CO\in\MaxSpec(R)/\Z^n}\scA(\Gamma,\CO),\qquad \mathscr{A}^{\text{split}}(\Gamma)=\prod_{\CO\in\MaxSpec(R)/\Z^n}\scA^{\text{split}}(\Gamma,\CO) \end{equation}
where $\scA(\Gamma,\CO)=\bigoplus_{t\in\Gamma} \C\otimes_{\Z}K_0(A(t)\text{-wmod}_\CO)$ and analogously for $\scA^{\text{split}}(\Gamma,\CO)$.
\end{Proposition}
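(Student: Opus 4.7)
The plan is to lift the categorical orbit decomposition of Theorem \ref{thm:wmodO}(a) to the Grothendieck group level and then verify that the tensor multiplication respects the orbit grading while annihilating across distinct orbits, the latter coming from Lemma \ref{lemma:weightmod}. The proof is essentially formal and breaks into two main steps: establishing the underlying graded vector space decomposition, and then checking that the multiplication inherited from \eqref{eq:k0m} realizes the direct product structure.

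For the vector space decomposition, I would fix $t\in\Gamma$ and invoke Theorem \ref{thm:wmodO}(a), which gives the equivalence $A(t)\text{-wmod}\simeq\prod_\CO A(t)\text{-wmod}_\CO$. Concretely, every weight module $M$ decomposes canonically as $M=\bigoplus_\CO M^{(\CO)}$ where $M^{(\CO)}=\bigoplus_{\Fm\in\CO}M_\Fm$, every morphism preserves this decomposition (since it preserves weight spaces), and every short exact sequence of weight modules breaks up orbit by orbit. Consequently, the defining relations for both $K_0$ and $K_0^{\text{split}}$ restrict to each orbit component, yielding an identification of $K_0(A(t)\text{-wmod})$ (respectively $K_0^{\text{split}}(A(t)\text{-wmod})$) with the product over $\CO\in\MaxSpec(R)/\Z^n$ of the corresponding orbit-wise groups. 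Tensoring with $\C$ and summing over $t\in\Gamma$ yields the claimed graded vector space identifications $\mathscr{A}(\Gamma)=\prod_\CO\scA(\Gamma,\CO)$ and $\mathscr{A}^{\text{split}}(\Gamma)=\prod_\CO\scA^{\text{split}}(\Gamma,\CO)$.

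Next, I would verify that this decomposition is multiplicative. Let $[M]\in K_0(A(t)\text{-wmod}_{\CO_1})$ and $[M']\in K_0(A(t')\text{-wmod}_{\CO_2})$ be homogeneous classes. Lemma \ref{lemma:weightmod} gives $\Supp(M\otimes_R M')=\Supp(M)\cap\Supp(M')\subset\CO_1\cap\CO_2$. Distinct $\Z^n$-orbits on $\MaxSpec(R)$ are disjoint, so when $\CO_1\neq\CO_2$ we have $M\otimes_R M'=0$, forcing $[M][M']=0$ in $\mathscr{A}(\Gamma)$; when $\CO_1=\CO_2=\CO$, the tensor product is again supported in $\CO$, so $\scA(\Gamma,\CO)$ is closed under multiplication. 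Thus each $\scA(\Gamma,\CO)$ is simultaneously a subalgebra and a two-sided ideal, with distinct components mutually annihilating — this is precisely the assertion that $\mathscr{A}(\Gamma)$ decomposes as a direct product of $\C$-algebras. The split version follows by the same argument verbatim, since the orbit decomposition is compatible with direct sums of modules.

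No substantial obstacle is anticipated: the entire argument is a formal consequence of Theorem \ref{thm:wmodO}(a) and Lemma \ref{lemma:weightmod}. The only mildly subtle point is the interpretation of ``direct product of $\C$-algebras'' when $\MaxSpec(R)/\Z^n$ is infinite and the component algebras $\scA(\Gamma,\CO)$ have no unit, but the orthogonality and closure established above are exactly what is meant by such a decomposition in this setting.
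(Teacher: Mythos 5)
Your proposal is correct and follows essentially the same route as the paper: Theorem \ref{thm:wmodO}(a) supplies the underlying vector space decomposition, and Lemma \ref{lemma:weightmod} supplies the orthogonality of the orbit components under the tensor-product multiplication. Your write-up simply spells out more explicitly how the categorical orbit decomposition passes to the Grothendieck-group level and why each $\scA(\Gamma,\CO)$ is closed under multiplication, but the two key ingredients and the structure of the argument are identical to the paper's.
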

\begin{proof}
From Theorem \ref{thm:wmodO}(a), we have the direct products in \eqref{eq:directsum} as vector spaces. By Lemma \ref{lemma:weightmod}, if $M\in A(t)\text{-wmod}_\CO$ and $M'\in A(t')\text{-wmod}_{\CO'}$ with $\CO\neq \CO'$, then $M\otimes_R M'=0$ because the supports are disjoint. This proves that the factors in \eqref{eq:directsum} are orthogonal under the multiplication we defined.
\end{proof}
\begin{Remark}
For all $\CO\in\MaxSpec(R)/\Z^n$, $t\in\Gamma$, there is a canonical surjective map of abelian groups
$$ K_0^{\text{split}}(A(t)\text{-wmod}_\CO)\twoheadrightarrow K_0(A(t)\text{-wmod}_\CO), \qquad [M]\mapsto [M],$$
which induces a canonical surjective $\C$-algebra map
\begin{equation}\label{eq:canon}\mathscr{A}^{\text{split}}(\Gamma,\CO)\twoheadrightarrow \mathscr{A}(\Gamma,\CO).\end{equation}
\end{Remark}

\begin{Remark}\label{rem:unit}
Let $\Bbo=(1,\ldots,1)\in R^n$, then for any $R$ and $\sigma$, we have $\Bbo\in\Omega$ and $A(\Bbo)\simeq R\rtimes\Z[X_1^{\pm 1}, \ldots, X_n^{\pm 1}]$ (see \cite[Example 1]{MT1999}, although in that case $\si_i=1$ for all $i$). By Theorem \ref{thm:wmodO}, if $\CO$ is a torsion-free orbit, there is a unique simple object $M^{\Bbo}\in A(\Bbo)\text{-wmod}_\CO$, and $M^{\Bbo}_\Fm\simeq R/\Fm$. Given any $\Fm_0\in\MaxSpec(R)$, and $m_0\in M^{\Bbo}_{\Fm_0}$, then $M^{\Bbo}_{\sigma_1^{g_1}\cdots\sigma_n^{g_n}(\Fm)}=R (X_1^{g_1}\cdots X_n^{g_n})m_0$ for all $(g_1,\ldots,g_n)\in\Z^n$.
\end{Remark}
\begin{Proposition}\label{prop:unital}
If $\CO$ is a torsion-free orbit, $\Bbo\in \Gamma$, and $M^{\Bbo}$ is the unique simple object in $A(\Bbo)\text{-wmod}_\CO$, then $\scA(\Gamma,\CO)$ (resp. $\scA^{\text{split}}(\Gamma,\CO)$) is unital, with identity given by $[M^{\Bbo}]\in K_0(A(\Bbo)\text{-wmod}_\CO)$ (resp. $K_0^{\text{split}}(A(\Bbo)\text{-wmod}_\CO)$ ).
\end{Proposition}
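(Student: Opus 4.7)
The plan is to construct, for every $t \in \Gamma$ and every $M \in A(t)\text{-wmod}_\CO$, an explicit $A(t)$-module isomorphism
\[
\Phi_M \colon M^{\Bbo} \otimes_R M \xrightarrow{\;\sim\;} M,
\]
where $M^{\Bbo} \otimes_R M$ is regarded as an $A(t)$-module by restriction along the coproduct $\Delta_{\Bbo,t}$ of Theorem~\ref{thm:coprod}. An isomorphism of modules yields equality of classes in both $K_0$ and $K_0^{\spl}$, so a single such $\Phi_M$ will handle the two claims of the proposition simultaneously; commutativity of the multiplication, recorded in the remark preceding Proposition~\ref{prop:direct-sum}, then promotes a one-sided identity to a two-sided one.

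To build $\Phi_M$, I would fix a base point $\Fm_0 \in \CO$ and a nonzero $m_0 \in M^{\Bbo}_{\Fm_0}$. By Remark~\ref{rem:unit}, for every $g \in \Z^n$ the vector $X^g m_0 := X_1^{g_1} \cdots X_n^{g_n} m_0$ is a basis of the one-dimensional weight space $M^{\Bbo}_{\si^g(\Fm_0)} \cong R/\si^g(\Fm_0)$. By Lemma~\ref{lemma:weightmod}, the weight spaces of the tensor product factor as
\[
(M^{\Bbo} \otimes_R M)_{\si^g(\Fm_0)} = M^{\Bbo}_{\si^g(\Fm_0)} \otimes_{R/\si^g(\Fm_0)} M_{\si^g(\Fm_0)},
\]
which, using the basis $X^g m_0$, is canonically identified with $M_{\si^g(\Fm_0)}$ via $X^g m_0 \otimes v \mapsto v$. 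I take $\Phi_M$ to be the direct sum of these identifications over all $\Fm \in \CO$; it is tautologically an $R$-linear bijection.

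The main obstacle, and the real content of the argument, will be checking that $\Phi_M$ intertwines the action of the generators $X_i^\pm(t)$ of $A(t)$. The key point is that in $A(\Bbo)$ the relations \eqref{eq:TGWA-Rels} specialize to $X_i^+(\Bbo) X_i^-(\Bbo) = X_i^-(\Bbo) X_i^+(\Bbo) = 1$, so $X_i^+(\Bbo)$ is two-sided inverse to $X_i^-(\Bbo)$, and a short manipulation using $[X_i^+, X_j^-] = 0$ for $i \neq j$ shows the $X_i^\pm(\Bbo)$ all commute pairwise. This forces $X_i^{\pm}(\Bbo) \cdot X^g m_0 = X^{g \pm e_i} m_0$, so under $\Delta_{\Bbo,t}$ one has
\[
\bigl(X_i^\pm(\Bbo) \otimes X_i^\pm(t)\bigr)\bigl(X^g m_0 \otimes v\bigr) = X^{g \pm e_i} m_0 \otimes X_i^\pm(t)\, v,
\]
which $\Phi_M$ sends to $X_i^\pm(t)\, v = X_i^\pm(t) \cdot \Phi_M(X^g m_0 \otimes v)$. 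With this equivariance established, $\Phi_M$ is an $A(t)$-module isomorphism and the proposition follows.
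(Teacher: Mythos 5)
Your proposal is correct and takes essentially the same approach as the paper: the paper also fixes a base point $\Fm_0$ and a weight vector $m_0\in M^{\Bbo}_{\Fm_0}$, uses Remark~\ref{rem:unit} to parametrize the weight spaces of $M^{\Bbo}$ by $X^g m_0$, and checks by hand that $m\mapsto m\otimes X^g m_0$ is an $A(t)$-module isomorphism $M\to M\otimes_R M^{\Bbo}$. The only cosmetic differences are that you write the inverse map $M^{\Bbo}\otimes_R M\to M$ and appeal explicitly to cocommutativity to upgrade to a two-sided unit, and you spell out the commutativity of the $X_i^\pm(\Bbo)$ that the paper uses tacitly when rewriting $X_1^{g_1}\cdots X_i^{g_i\pm1}\cdots X_n^{g_n}m_0$ as $X_i^{\pm}X^g m_0$.
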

\begin{proof}
We fix $\Fm_0\in\MaxSpec(R)$, and $m_0\in M^{\Bbo}_{\Fm_0}$, so that 
$$M^{\Bbo}=\bigoplus_{\Fm\in\CO}M^{\Bbo}_{\Fm}=\bigoplus_{(g_1,\ldots,g_n)\in\Z^n}R (X_1^{g_1}\cdots X_n^{g_n})m_0.$$
Notice that we also have
$$ M=\bigoplus_{\Fm\in\CO}M_{\Fm}=\bigoplus_{(g_1,\ldots,g_n)\in\Z^n}M_{\sigma_1^{g_1}\cdots\sigma_n^{g_n}(\Fm)}.$$
We define a map $\rho:M\to M\otimes_R M^{\Bbo}$ by
$$ \rho(m)=m\otimes  (X_1^{g_1}\cdots X_n^{g_n})m_0, \quad\text{ if }m\in M_{\sigma_1^{g_1}\cdots\sigma_n^{g_n}(\Fm)}.$$
We claim that $\rho$ is an isomorphism of weight modules for $A(t)$, where $M\otimes_R M^{\Bbo}$ is considered an $A(t)=A(t\cdot \Bbo)$-module via the map $\Delta_{t,1}$ from \eqref{eq:tgwa-coprod}.
First of all, $\rho$ is indeed a map of $A(t)$-modules because
$$\rho(r\cdot m)=rm\otimes (X_1^{g_1}\cdots X_n^{g_n})m_0=(r\otimes 1)m\otimes (X_1^{g_1}\cdots X_n^{g_n})m_0=\Delta_{t,1}(r)\cdot \rho(m),$$
\begin{align*}\rho(X_i^{\pm}(t)\cdot m)&=X_i^{\pm}(t)m\otimes (X_1^{g_1}\cdots X_i^{g_i\pm 1}\cdots X_n^{g_n})m_0\\
&=X_i^{\pm}(t)m\otimes X_i^{\pm 1}(X_1^{g_1}\cdots X_n^{g_n})m_0 \\
&=(X_i^{\pm}(t)\otimes X_i^{\pm 1})(m\otimes (X_1^{g_1}\cdots X_n^{g_n})m_0) \\
&=\Delta_{t,1}(X_i^{\pm}(t))\rho(m).\end{align*}
In this computation, we have denoted the generators in $A(t)$ by $X_i^{\pm}(t)$ and the generators in $A(\Bbo)$ by $X_i^{\pm 1}$.
 
Then $\rho$ is invertible, with inverse given by the map $m\otimes r(X_1^{g_1}\cdots X_n^{g_n})m_0\mapsto rm$, so it is indeed an isomorphism.
\end{proof}

\begin{Definition}
For a fixed $R$, $\sigma$, and $\CO\in\MaxSpec(R)/\Z^n$, we let 
\begin{align*}\Omega_\CO^{\times}&:=\{t\in\Omega~|~\si_i^{1/2}(t_i)\not\in \bigcup_{\Fm\in\CO}\Fm\quad\forall i=1,\ldots,n~\}\\
&=\{t\in\Omega~|~\si_i^{1/2}(t_i)\text{ is invertible in }S_\CO^{-1}R\quad\forall i=1,\ldots,n~\}\end{align*}.
\end{Definition}
Notice that if $t,t'\in\Omega_\CO^{\times}$, then $tt'\in\Omega_\CO^{\times}$.
\begin{Lemma}\label{lemma:uniquemod}
Suppose $\CO$ be a torsion-free orbit, and let $t, t'\in\Omega_\CO^\times$. Let $M^t$ (resp. $M^{t'}$) be the unique simple module in $A(t)\text{-wmod}_\CO$ (resp. $A(t')\text{-wmod}_\CO$), then $M^t\otimes_R M^{t'}\simeq M^{tt'}$ as $A(tt')$-modules, via the map $\Delta_{t,t'}$ of \eqref{eq:tgwa-coprod}, where $M^{tt'}$ is the unique simple module in $A(tt')\text{-wmod}_\CO$.
\end{Lemma}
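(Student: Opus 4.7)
The plan is to reduce the isomorphism $M^t\otimes_R M^{t'}\simeq M^{tt'}$ to Theorem \ref{thm:wmodO}(d), by showing that $M^t\otimes_R M^{t'}$ is a nonzero object of $A(tt')\textrm{-wmod}_\CO$ whose weight spaces have the right dimensions, so that it must be (up to isomorphism) the unique simple object provided by that theorem.

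First I would verify that $tt'\in\Omega_\CO^\times$, so that Theorem \ref{thm:wmodO}(d) applies to $A(tt')\text{-wmod}_\CO$: since $S_\CO^{-1}R$ is a localization of the domain $R$, it is itself a domain, and $\si_i^{1/2}(t_it_i')=\si_i^{1/2}(t_i)\si_i^{1/2}(t_i')$ is a product of units in $S_\CO^{-1}R$, hence a unit. Therefore $A(tt')\textrm{-wmod}_\CO$ is semisimple with a unique simple object $M^{tt'}$ whose weight space at any $\Fm\in\CO$ is isomorphic to $R/\Fm$ as an $R$-module.

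Next I would show that $N:=M^t\otimes_R M^{t'}$, viewed as an $A(tt')$-module via $\Delta_{t,t'}$, belongs to $A(tt')\textrm{-wmod}_\CO$. By Lemma \ref{lemma:weightmod}, $N$ is a weight module with $N_\Fm = M^t_\Fm\otimes_R M^{t'}_\Fm$ and $\Supp(N)=\Supp(M^t)\cap\Supp(M^{t'})=\CO$. Since each $M^t_\Fm\cong R/\Fm\cong M^{t'}_\Fm$ by Theorem \ref{thm:wmodO}(d), we get $N_\Fm\cong R/\Fm\otimes_R R/\Fm\cong R/\Fm$, which is in particular a finite-dimensional $R/\Fm$-vector space, so $N$ lies in the category.

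Finally, I would invoke the equivalence $A(tt')\textrm{-wmod}_\CO\simeq\text{Vec}_{\text{fd}}$ from Theorem \ref{thm:wmodO}(d). Under this equivalence, a module is determined (up to isomorphism) by the common dimension of its weight spaces over $R/\Fm$. Since $\dim_{R/\Fm} N_\Fm=1=\dim_{R/\Fm}M^{tt'}_\Fm$, it follows that $N\simeq M^{tt'}$. The only potential obstacle is ensuring that the $A(tt')$-action on $N$ actually extends the $R$-action in the weight-module sense; but this is automatic from the definition of $\Delta_{t,t'}$ (which sends $r\mapsto r\otimes 1=1\otimes r$) combined with Lemma \ref{lemma:weightmod}, so no extra work is required.
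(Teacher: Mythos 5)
Your argument is correct and follows essentially the same route as the paper's own proof: use Lemma \ref{lemma:weightmod} to identify $N_\Fm\simeq R/\Fm\otimes_R R/\Fm\simeq R/\Fm$, then appeal to the semisimplicity with unique simple object from Theorem \ref{thm:wmodO}(d) to conclude. Your write-up spells out a few more of the background checks (that $tt'\in\Omega_\CO^\times$, which the paper records separately just before the lemma, and how the equivalence to $\text{Vec}_{\text{fd}}$ forces the identification), but there is no substantive difference in approach.
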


\begin{proof}
By Lemma \ref{lemma:weightmod}, we know that $M^t\otimes_R M^{t'}$ is a weight module for $A(tt')$, and, as $R$-modules, we have $(M^t\otimes_R M^{t'})_\Fm\simeq R/\Fm\otimes_R R/\Fm\simeq R/\Fm$. Since the category $A(tt')\text{-wmod}_\CO$ is semisimple with a unique simple object, the result follows.
\end{proof}
\begin{Corollary}\label{cor:semigrp}
If $\CO$ is a torsion-free orbit and $\Gamma\subset\Omega_\CO^\times$, then $\scA(\Gamma,\CO)\simeq\scA^{\text{split}}(\Gamma,\CO)\simeq\C[\Gamma]$ where $\C[\Gamma]$ is the semigroup ring.
\end{Corollary}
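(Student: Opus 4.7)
The plan is to combine the semisimplicity statement of Theorem \ref{thm:wmodO}(d) with the tensor product computation of Lemma \ref{lemma:uniquemod} to identify each graded piece and the multiplication explicitly; the proof amounts to bookkeeping with no serious obstacle.

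First I would observe that since $\Gamma\subset\Omega_\CO^\times$ and $\CO$ is torsion-free, for every $t\in\Gamma$ the hypotheses of Theorem \ref{thm:wmodO}(d) are satisfied. Hence $A(t)\text{-wmod}_\CO$ is semisimple, equivalent to the category of finite-dimensional $\C$-vector spaces, with a unique simple object $M^t$ up to isomorphism. Semisimplicity immediately forces the two equivalence relations in the definitions of $K_0$ and $K_0^{\text{split}}$ to coincide (every short exact sequence splits), so the canonical surjection \eqref{eq:canon} is an isomorphism. Both groups are free abelian of rank one, generated by $[M^t]$, so the graded piece in degree $t$ of either $\scA(\Gamma,\CO)$ or $\scA^{\text{split}}(\Gamma,\CO)$ is the one-dimensional $\C$-vector space $\C[M^t]$.

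Next I would define the $\C$-linear map $\Phi:\scA(\Gamma,\CO)\to\C[\Gamma]$ by $[M^t]\mapsto t$ for each $t\in\Gamma$ (with the obvious parallel definition for the split version). Since both sides are $\C$-vector spaces with canonical bases indexed by $\Gamma$, the map $\Phi$ is a $\C$-linear isomorphism by construction. It remains to check that $\Phi$ is multiplicative. By definition of the product in $\scA(\Gamma,\CO)$,
\begin{equation*}
[M^t]\cdot [M^{t'}] \;=\; [M^t\otimes_R M^{t'}],
\end{equation*}
where the right-hand side is viewed as an $A(tt')$-module via $\Delta_{t,t'}$. By Lemma \ref{lemma:uniquemod}, $M^t\otimes_R M^{t'}\simeq M^{tt'}$ as $A(tt')$-modules, so this product equals $[M^{tt'}]$, which matches $t\cdot t'$ in the semigroup ring $\C[\Gamma]$. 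Thus $\Phi$ is an algebra isomorphism, and the same argument gives the split version.

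The only step requiring care is verifying that the tensor product $M^t\otimes_R M^{t'}$ really lies in $A(tt')\text{-wmod}_\CO$ so that Lemma \ref{lemma:uniquemod} applies; this follows from Lemma \ref{lemma:weightmod}, since $\Supp(M^t\otimes_R M^{t'})=\Supp(M^t)\cap\Supp(M^{t'})\subset\CO$. No genuine obstacle appears.
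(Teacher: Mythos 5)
Your proof is correct and follows the same route the paper takes: Theorem \ref{thm:wmodO}(d) identifies each graded piece as $\Z[M^t]$ for both $K_0$ and $K_0^{\mathrm{split}}$, and Lemma \ref{lemma:uniquemod} identifies the product with the semigroup product. You simply make explicit the bookkeeping (the map $[M^t]\mapsto t$ and the support check) that the paper leaves implicit.
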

\begin{proof}
By Theorem \ref{thm:wmodO}(d), if $t\in\Omega_\CO^\times$, the category $A(t)\text{-wmod}_\CO$ is semisimple with a unique simple $M^t$ up to isomorphism, hence $K_0(A(t)\text{-wmod})=K_0^{\text{split}}(A(t)\text{-wmod})=\Z [M^t]$. The result then follows from \ref{lemma:uniquemod}.
\end{proof}
\begin{Proposition}\label{prop:units}If $\CO$ is a torsion-free orbit, $t\in\Omega$ and $t^\times\in\Omega_\CO^{\times}$ then the category $A(t)\text{-wmod}_\mathcal{O}$ is equivalent to the category $A(t\cdot t^\times)\text{-wmod}_\mathcal{O}$ via the functor
\begin{equation}\label{eq:equivtens} M\mapsto M\otimes_R M^{t^\times}\end{equation}
where $M^{t^\times}$ is the unique simple object in $A(t^\times)\text{-wmod}_\CO$.
\end{Proposition}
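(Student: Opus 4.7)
The plan is to construct an explicit quasi-inverse to $F \colon M \mapsto M \otimes_R M^{t^\times}$. First, I would use Theorem \ref{thm:wmodO}(b) to pass to the case $R = R_\CO$, since the two categories in question are equivalent to their localized counterparts, and this puts us in a setting where every element outside $\bigcup_{\Fm \in \CO} \Fm$ is a unit. Because $t^\times \in \Omega_\CO^\times$, each $\si_i^{1/2}(t_i^\times)$, hence each $t_i^\times$ itself, is then a unit in $R$, and one may form the tuple $s := \big((t_1^\times)^{-1}, \ldots, (t_n^\times)^{-1}\big)$. Applying $\si_i^{\pm 1/2}$ to the consistency equations \eqref{eq:cons1}--\eqref{eq:cons2} for $t^\times$ and then inverting both sides shows the same equations hold for $s$, so $s \in \Omega$; and since $\si_i^{1/2}(s_i)$ is manifestly a unit, in fact $s \in \Omega_\CO^\times$. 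By construction $s \cdot t^\times = \Bbo$.

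Next, I would define the candidate quasi-inverse $G \colon A(t \cdot t^\times)\text{-wmod}_\CO \to A(t)\text{-wmod}_\CO$ by $G(N) := N \otimes_R M^s$, where $M^s$ is the unique simple object of $A(s)\text{-wmod}_\CO$ supplied by Theorem \ref{thm:wmodO}(d). By Lemma \ref{lemma:weightmod}, $G(N)$ is a weight module supported in $\CO$, viewed as an $A(t)$-module by restriction along the map $\Delta_{t \cdot t^\times,\, s}$ of Theorem \ref{thm:coprod}, noting that $(t \cdot t^\times) \cdot s = t$.

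Finally, I would verify that $F$ and $G$ are mutually quasi-inverse. The coassociativity in Theorem \ref{thm:coprod}(b) supplies a natural isomorphism
\[ F(G(N)) = (N \otimes_R M^s) \otimes_R M^{t^\times} \;\simeq\; N \otimes_R (M^s \otimes_R M^{t^\times}), \]
and Lemma \ref{lemma:uniquemod} identifies $M^s \otimes_R M^{t^\times} \simeq M^\Bbo$, after which Proposition \ref{prop:unital} yields $N \otimes_R M^\Bbo \simeq N$; hence $F \circ G \simeq \Id$, with $G \circ F \simeq \Id$ following by the symmetric argument. The main subtlety I anticipate is verifying that the pointwise isomorphisms from Lemma \ref{lemma:uniquemod} and Proposition \ref{prop:unital} assemble into genuinely \emph{natural} isomorphisms of functors, but since in each case the relevant map is given by a formula uniform across weight components, compatibility with arbitrary module morphisms should follow essentially immediately.
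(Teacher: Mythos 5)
Your proof is correct and follows essentially the same route as the paper: localize along $\CO$ via Theorem \ref{thm:wmodO}(b) so that $t^\times$ becomes invertible, tensor with the simple module associated to the inverse tuple, and invoke Lemma \ref{lemma:uniquemod} together with Proposition \ref{prop:unital} to collapse the compositions to the identity. The only cosmetic difference is that you name the inverse tuple $s$ and verify directly that it satisfies the consistency equations, whereas the paper refers to it as $(\bar{t}^\times)^{-1}$ and takes this for granted; both amount to the same argument.
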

\begin{proof}By Theorem \ref{thm:wmodO}(b), we have $A(t)\text{-wmod}_\mathcal{O} \simeq A_{\mathcal{O}}(\bar{t})\text{-wmod}_\mathcal{O}$. Now, we define a functor $F_{\bar{t}^\times}:A_{\mathcal{O}}(\bar{t})\text{-wmod}_\mathcal{O}\to A_{\mathcal{O}}(\bar{t}\bar{t}^\times)\text{-wmod}_\mathcal{O}$ by
$$ M\mapsto M\otimes_{R_\CO} M^{\bar{t}^\times}$$
where $M^{\bar{t}^\times}$ is the unique simple object.
Since, $t^\times\in\Omega^{\times}$, its image $\bar{t}^\times\in (R_\CO)^n$ is invertible. Hence we can define another functor 
$$F_{(\bar{t}^\times)^{-1}}:A_{\mathcal{O}}(\bar{t}\bar{t}^\times)\text{-wmod}_\mathcal{O}\to A_{\mathcal{O}}(\bar{t})\text{-wmod}_\mathcal{O},\qquad N\mapsto  N\otimes_{R_\CO} M^{(\bar{t}')^{-1}} $$
where $M^{(\bar{t}^\times)^{-1}}$ is the unique simple object. Then we have the compositions
\begin{align*} F_{(\bar{t}^\times)^{-1}}F_{\bar{t}^\times}(M)&=(M\otimes_{R_\CO}M^{\bar{t}})\otimes_{R_\CO}M^{(\bar{t}^\times)^{-1}} \\
&= M\otimes_{R_\CO}(M^{\bar{t}}\otimes_{R_\CO}M^{(\bar{t}^\times)^{-1}})\\
\text{ ( by Lemma \ref{lemma:uniquemod} ) }&\simeq M\otimes_{R_\CO} M^{\Bbo} \\
\text{ ( by Prop. \ref{prop:unital} ) }&\simeq M
\end{align*}
and analogously
\begin{align*} F_{\bar{t}^\times}F_{(\bar{t}^\times)^{-1}}(N)&=(N\otimes_{R_\CO}M^{(\bar{t}^\times)^{-1}})\otimes_{R_\CO}M^{\bar{t}} \\
&= N\otimes_{R_\CO}(M^{(\bar{t}^\times)^{-1}}\otimes_{R_\CO}M^{\bar{t}})\\
\text{ ( by Lemma \ref{lemma:uniquemod} ) }&\simeq N\otimes_{R_\CO} M^{\Bbo} \\
\text{ ( by Prop. \ref{prop:unital} ) } &\simeq N.
\end{align*}
It follows that $F_{\bar{t}^\times}$ and  $F_{(\bar{t}^\times)^{-1}}$ are equivalences of categories, hence, by applying again Theorem \ref{thm:wmodO}, we have a chain of equivalences
$$A(t)\text{-wmod}_\mathcal{O} \simeq A_{\mathcal{O}}(\bar{t})\text{-wmod}_\mathcal{O}\simeq A_{\mathcal{O}}(\bar{t}\bar{t}^\times)\text{-wmod}_\mathcal{O}\simeq A(tt^\times)\text{-wmod}_\mathcal{O}$$
and the composition of functors is given exactly by \eqref{eq:equivtens}.
\end{proof}

\begin{Remark}
Let $\Gamma\subset\Omega$ be a monoid (i.e. $\Bbo\in\Gamma$), let $\CO$ be a torsion-free orbit, and let $\Gamma^\times:=\Gamma\cap\Omega_\CO^\times$, then we have a short exact sequence of monoids
\begin{equation}\label{eq:semigrp-ses} \Bbo\to \Gamma^\times\to \Gamma\to \Gamma/\Gamma^\times\to \Bbo\end{equation}
which by Cor. \ref{cor:semigrp} induces the inclusions
$$\C[\Gamma^\times]\simeq \scA(\Gamma^\times,\CO)\hookrightarrow\scA(\Gamma,\CO), \qquad \C[\Gamma^\times]\simeq \scA^{\text{split}}(\Gamma^\times,\CO)\hookrightarrow\scA^{\text{split}}(\Gamma,\CO)$$
We can also define the quotient algebras
$$\scA(\Gamma/\Gamma^\times,\CO):=\scA(\Gamma,\CO)/\left([M]-[M\otimes_R M^t]~|~\forall M,~\forall t\in\Gamma^\times\right),$$
$$\scA^{\spl}(\Gamma/\Gamma^\times,\CO):=\scA^{\spl}(\Gamma,\CO)/\left([M]-[M\otimes_R M^t]~|~\forall M,~\forall t\in\Gamma^\times\right),$$
which are graded by the quotient monoid $\Gamma/\Gamma^\times$.
\end{Remark}
\begin{Proposition}\label{prop:split-monoid}If $\CO$ is a torsion-free orbit and the short exact sequence \eqref{eq:semigrp-ses} splits, i.e. there is a monoid $\Gamma'$ such that $\Gamma=\Gamma^\times\cdot\Gamma'$ and $\Gamma^\times\cap\Gamma'=\{\Bbo\}$, then we have isomorphisms of graded $\C$-algebras
$$\scA(\Gamma/\Gamma^\times,\CO)\simeq\scA(\Gamma',\CO),\qquad\qquad\scA(\Gamma,\CO)\simeq\C[\Gamma^\times]\otimes_\C\scA(\Gamma',\CO),$$
$$\scA^{\spl}(\Gamma/\Gamma^\times,\CO)\simeq\scA^{\spl}(\Gamma',\CO),\qquad\qquad\scA^{\spl}(\Gamma,\CO)\simeq\C[\Gamma^\times]\otimes_\C\scA^{\spl}(\Gamma',\CO).$$
\end{Proposition}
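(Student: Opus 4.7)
The plan is to exploit the splitting $\Gamma=\Gamma^\times\cdot\Gamma'$ to write $\scA(\Gamma,\CO)$ as a tensor product. First I would define a graded $\C$-linear map
\[
\Phi:\C[\Gamma^\times]\otimes_\C\scA(\Gamma',\CO)\to\scA(\Gamma,\CO),\qquad [t^\times]\otimes [N]\mapsto [N\otimes_R M^{t^\times}],
\]
for $t^\times\in\Gamma^\times$ and $N\in A(t')\text{-wmod}_\CO$ with $t'\in\Gamma'$, where $M^{t^\times}$ is the unique simple object of $A(t^\times)\text{-wmod}_\CO$ provided by Theorem \ref{thm:wmodO}(d). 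Since every $t\in\Gamma$ factors uniquely as $t=t^\times t'$, Proposition \ref{prop:units} ensures that in each degree $t$, $\Phi$ restricts to the isomorphism $V_{t'}\xrightarrow{\sim} V_{t}$ given by tensoring with $M^{t^\times}$ (where $V_s=\C\otimes_\Z K_0(A(s)\text{-wmod}_\CO)$). Hence $\Phi$ is a $\C$-linear bijection, and it is manifestly degree-preserving with respect to the product grading $\Gamma^\times\times\Gamma'\cong \Gamma$.

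Next I would verify multiplicativity of $\Phi$. For $N_i\in A(t_i')\text{-wmod}_\CO$ and $t_i^\times\in\Gamma^\times$ ($i=1,2$), associativity and commutativity of $\otimes_R$ yield a natural isomorphism
\[
(N_1\otimes_R M^{t_1^\times})\otimes_R(N_2\otimes_R M^{t_2^\times})\simeq (N_1\otimes_R N_2)\otimes_R(M^{t_1^\times}\otimes_R M^{t_2^\times})
\]
of $A(t_1t_2)$-modules, where both sides are viewed as $A(t_1t_2)$-modules via iterated coproducts, so that the two resulting module structures agree by coassociativity and cocommutativity of $\Delta$ (Theorem \ref{thm:coprod}(b),(c)). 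Applying Lemma \ref{lemma:uniquemod} to the right-hand factor gives $M^{t_1^\times}\otimes_R M^{t_2^\times}\simeq M^{t_1^\times t_2^\times}$, and matching classes with the product in $\C[\Gamma^\times]\otimes_\C\scA(\Gamma',\CO)$ establishes the second isomorphism.

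For the first isomorphism, I would observe that the defining relations of $\scA(\Gamma/\Gamma^\times,\CO)$, namely $[M]=[M\otimes_R M^t]$ for $t\in\Gamma^\times$, are transported under $\Phi$ to $[t^\times]\otimes[N]=[\Bbo]\otimes[N]$, using Proposition \ref{prop:unital} that $[M^{\Bbo}]$ is the identity. Quotienting thus collapses $\C[\Gamma^\times]$ to $\C$ and leaves $\scA(\Gamma',\CO)$. The split versions follow by the identical argument with $K_0^{\spl}$ in place of $K_0$, since all the equivalences of categories employed preserve direct sums.

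The main obstacle I anticipate is the rigorous verification of the multiplicativity of $\Phi$: one has to reshuffle a four-fold $R$-tensor product and confirm that the two $A(t_1t_2)$-module structures obtained by distinct iterations of $\Delta$ coincide. This is exactly what coassociativity and cocommutativity of $\Delta$ (Theorem \ref{thm:coprod}(b),(c)) deliver, but the bookkeeping of which factor is acted on by which copy of $X_i^\pm$ must be done carefully.
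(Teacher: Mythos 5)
Your proof is correct and follows essentially the same route as the paper: both rely on the unique factorization $t=t^\times t'$, on Proposition \ref{prop:units} to see that tensoring with $M^{t^\times}$ gives an equivalence in each graded piece, and on the coassociativity/cocommutativity of $\Delta$ together with Lemma \ref{lemma:uniquemod} to reshuffle the four-fold tensor product and verify multiplicativity. The only cosmetic difference is that you define the single map $\Phi$ (the paper's $\epsilon$) and derive the first isomorphism by transporting the quotient relations across $\Phi$, whereas the paper writes down explicit mutually inverse pairs $(\alpha,\beta)$ and $(\delta,\epsilon)$; the content is the same.
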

\begin{proof}Since \eqref{eq:semigrp-ses} splits, for all $t\in\Gamma$, we can write in a unique way $t=t^\times t'$, with $t^\times\in\Gamma^\times$, $t'\in\Gamma'$. We define a map $\alpha$ to be the composition of the obvious inclusion with the quotient map
$$\scA(\Gamma',\CO)\hookrightarrow\scA(\Gamma,\CO)\twoheadrightarrow \scA(\Gamma/\Gamma^\times,\CO)$$
which is clearly an algebra map. We define a map going the other way
$$\beta:\scA(\Gamma/\Gamma^\times,\CO)\to \scA(\Gamma',\CO)$$
as follows. Let $M\in A(t)\text{-wmod}$, for $t=t^\times t'\in\Gamma=\Gamma^\times\cdot \Gamma'$, then by Proposition \ref{prop:units}, $M\simeq M^{t^\times}\otimes M'$ for some $M'\in A(t')\text{-wmod}$, so we define
$$ \beta([M])=[M'].$$
It is clear that $\beta$ is well defined on the quotient, and that for $t\in\Gamma'$ we have $t'=t$, so for $[M]\in\scA(\Gamma',\CO)$ 
$$\beta\circ\alpha([M])=\beta([M])=[M']=[M].$$ 
We also have, for $[M]\in\scA(\Gamma/\Gamma',\CO)$, 
$$\alpha\circ\beta([M])=\alpha[M']=[M']=[M'\otimes_R M^{t^\times}]=[M].$$
Hence $\beta$ is a two sided inverse of $\alpha$ and they are both isomorphisms.

Now consider the map
$$\delta:\scA(\Gamma,\CO)\simeq\scA(\Gamma^\times,\CO)\otimes_\C\scA(\Gamma',\CO),\qquad \delta([M])=[M^{t^\times}]\otimes [M']$$
for $M\in A(t)\text{-wmod}$ and $M\simeq M^{t^\times}\otimes_R M'$.
We now show that $\delta$ is an algebra map. If $M\in A(t)\text{-wmod}$, $N\in A(u)\text{-wmod}$, then $tu=t^\times t'u^\times u'=(tu)^\times (tu)'$, and $M\simeq M^{t^\times}\otimes_R M'$, $N\simeq M^{u^\times}\otimes_R N'$. So we have
\begin{align*} M\otimes_R N &\simeq (M^{t^\times}\otimes_R M')\otimes_R ( M^{u^\times}\otimes_R N')\\
& \simeq M^{t^\times}\otimes_R M^{u^\times}\otimes _R M'\otimes_R N' \\
& \simeq M^{t^\times u^\times}\otimes _R (M'\otimes_R N') \\
& \simeq M^{(tu)^\times}\otimes_R (M'\otimes_R N')
\end{align*}
hence $(M\otimes_R N)'\simeq M'\otimes_R N'$ and
\begin{align*} \delta([M]\cdot [N])& =\delta([M\otimes_R N]) \\
&=[M^{(tu)^\times}]\otimes [(M\otimes_R N)'] \\
&=[M^{t^\times u^\times}]\otimes [M'\otimes_R N']\\
&=[M^{t^\times}\otimes_R M^{u^\times}]\otimes  [M'\otimes_R N']\\
&=([M^{t^\times}]\cdot [ M^{u^\times}])\otimes ([M']\cdot[N'])\\
&=( [M^{t^\times}]\otimes [M']) \cdot ([ M^{u^\times}]\otimes [N']) \\
&= \delta([M])\cdot \delta([N]).
\end{align*}
We can also define 
$$\epsilon:\scA(\Gamma^\times,\CO)\otimes_\C\scA(\Gamma',\CO)\to \scA(\Gamma,\CO),\qquad \epsilon([M]\otimes [N])=[M\otimes_R N]$$
and it is clear that $\epsilon$ is the inverse of $\delta$, hence they are isomorphisms. Finally, we use $\ref{cor:semigrp}$ to conclude  the proof for the $\scA$'s. The arguments for the $\scA^{\spl}$'s are identical.
\end{proof}
\begin{Proposition}\label{prop:monoid-iso}
Let $\CO\in\MaxSpec(R)/\Z^n$ be a torsion-free orbit. Let $\Gamma_1,\Gamma_2\subset\Omega$ be submonoids, and let $\gamma:\Gamma_1\to\Omega_\CO^\times$ be a monoid map such that the map
$$\phi:\Gamma_1\to\Gamma_2,\qquad \phi(t)=\gamma(t)\cdot t$$
is a monoid isomorphism. Then we have isomorphisms of $\C$-algebras (actually graded isomorphisms if we identify the grading monoids via $\phi$)
$$ \scA(\Gamma_1,\CO)\simeq \scA(\Gamma_2,\CO),\qquad \scA^{\spl}(\Gamma_1,\CO)\simeq\scA^{\spl}(\Gamma_2,\CO).$$
\end{Proposition}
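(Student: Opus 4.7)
The plan is to assemble the isomorphism one graded piece at a time using Proposition \ref{prop:units}, after which multiplicativity follows almost formally from Lemma \ref{lemma:uniquemod} and the monoid-map property of $\gamma$. More precisely, since $\gamma(t)\in\Omega_\CO^\times$ and $\CO$ is torsion-free, Proposition \ref{prop:units} applied with $t^\times=\gamma(t)$ gives, for each $t\in\Gamma_1$, an equivalence of categories
\[
F_t\colon A(t)\text{-wmod}_\CO \xrightarrow{\;\sim\;} A(\gamma(t)t)\text{-wmod}_\CO = A(\phi(t))\text{-wmod}_\CO,\qquad M\longmapsto M\otimes_R M^{\gamma(t)}.
\]
Any equivalence of abelian categories respects short exact sequences and direct sums, so $F_t$ descends to an isomorphism on both $K_0$ and $K_0^{\spl}$. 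Base-changing to $\C$ and summing over $t\in\Gamma_1$, and using the bijection $\phi\colon\Gamma_1\to\Gamma_2$ to identify grading monoids, then yields graded $\C$-linear isomorphisms $\Psi\colon\scA(\Gamma_1,\CO)\xrightarrow{\sim}\scA(\Gamma_2,\CO)$ and $\Psi^{\spl}\colon\scA^{\spl}(\Gamma_1,\CO)\xrightarrow{\sim}\scA^{\spl}(\Gamma_2,\CO)$.

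What remains is to verify that $\Psi$ (and $\Psi^{\spl}$) is an algebra homomorphism. For $[M]\in K_0(A(t)\text{-wmod}_\CO)$ and $[N]\in K_0(A(s)\text{-wmod}_\CO)$ with $t,s\in\Gamma_1$, I would compute both sides:
\[
\Psi([M]\cdot[N]) = \bigl[(M\otimes_R N)\otimes_R M^{\gamma(ts)}\bigr],\qquad \Psi([M])\cdot\Psi([N]) = \bigl[(M\otimes_R M^{\gamma(t)})\otimes_R (N\otimes_R M^{\gamma(s)})\bigr].
\]
The coassociativity and cocommutativity in Theorem \ref{thm:coprod}(b)--(c), which make $\otimes_R$ associative and symmetric up to natural isomorphism on weight modules, reduce the equality of these two classes to the isomorphism $M^{\gamma(ts)}\simeq M^{\gamma(t)}\otimes_R M^{\gamma(s)}$. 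This is exactly Lemma \ref{lemma:uniquemod} applied to the elements $\gamma(t),\gamma(s)\in\Omega_\CO^\times$, whose product equals $\gamma(ts)$ because $\gamma$ is a monoid map.

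The split case is verbatim the same since everything above is carried out through the equivalence of categories $F_t$, which respects both exact sequences and direct sums. I do not expect any genuine obstacle: the only non-formal step is the reshuffling of tensor factors in the multiplicativity check, which is immediate from the cited results and the hypothesis that $\gamma$ is multiplicative.
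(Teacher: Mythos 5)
Your proposal is correct and follows essentially the same route as the paper. The paper defines the map $\alpha\colon[M]\mapsto[M\otimes_R M^{\gamma(t)}]$ directly — exactly your formula — and checks multiplicativity by the same tensor-reshuffling reduction to Lemma \ref{lemma:uniquemod} ($M^{\gamma(t)}\otimes_R M^{\gamma(u)}\simeq M^{\gamma(tu)}$); the only cosmetic difference is that you package bijectivity as ``$F_t$ is an equivalence of categories by Proposition \ref{prop:units},'' whereas the paper spells out the inverse $\beta\colon[N]\mapsto[N']$, itself justified by the same Proposition \ref{prop:units}.
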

\begin{proof}
We prove the statement for the $\scA$'s, the statement for the $\scA^{\spl}$ is entirely analogous. We define a map 
$$\alpha:\scA(\Gamma_1,\CO)\to\scA(\Gamma_2,\CO),\qquad [M]\mapsto [M\otimes_R M^{\gamma(t)}]$$
where $M\in A(t)\text{-wmod}$, $t\in\Gamma_1$, $M^{\gamma(t)}$ is the unique simple module in $A(\gamma(t))\text{-wmod}$, and $M\otimes_R M^{\gamma(t)}$ is an $A(\gamma(t)\cdot t)=A(\phi(t))$-module. Notice that if $M\in A(t)\text{-wmod}$, $N\in A(u)\text{-wmod}$, then
\begin{align*}
\alpha([M]\cdot [N])&=\alpha([M\otimes_R N]) \\
&= [(M\otimes_R N)\otimes_R M^{\gamma(tu)}]\\
&= [M\otimes_R N\otimes_R M^{\gamma(t)\gamma(u)}]\\
&= [M\otimes_R N\otimes_R M^{\gamma(t)}\otimes_R M^{\gamma(u)}]\\
&= [(M\otimes_R M^{\gamma(t)})\otimes_R(N\otimes_R M^{\gamma(u)})] \\
&= \alpha([M])\cdot \alpha([N])
\end{align*}
so $\alpha$ is an algebra map. We also define $\beta:\scA(\Gamma_2,\CO)\to\scA(\Gamma_1,\CO)$ as follows:
let $u\in\Gamma_2$, $N\in A(u)\text{-wmod}$, then $N\simeq N'\otimes_R M^{\gamma(\phi^{-1}(t))}$, we define
$$ \beta([N])=[N'].$$
Then for all $M\in A(t)\text{-wmod}$, $t\in\Gamma_1$, we have
$$\beta\circ\alpha([M])=\beta([M\otimes_R M^{\gamma(t)}])=[M],$$
and for all $N\in A(u)\text{-wmod}$, $u\in\Gamma_2$, 
$$\alpha\circ\beta([N])=\alpha([N'])=[N'\otimes M^{\gamma(\phi^{-1}(u))}]=[N].$$
Hence $\alpha$ and $\beta$ are inverses of each other and they are isomorphisms.
\end{proof}



%

\section{Rank one setting (Line)}\label{sec:line}
We retain all notation of Section \ref{section:towers} and we explictly describe the algebras introduced there, in the special case of $n=1$, $R=\C[z]$, and $\sigma^{1/2}(z)=z-\frac{1}{2}$.

In this case $\Omega=\C[z]\setminus \{0\}$ and for all $t\in\Omega$ we have the TGWA $A(t)$ generated by $X^+$ and $X^-$, with relations

$$X^+X^- = \sigma^{1/2}(t),\qquad X^- X^+= \sigma^{-1/2}(t),$$
$$X^+r = \sigma(r) X^+,\qquad X^- r=\sigma^{-1}(r) X^-,$$

for all $r\in\C[z]$.

We have $\MaxSpec(R)=\{(z-\la)~|~\la\in\C\}$ with $\Z$-action given by $\sigma(z-\la)=(z-\la-1)$. The orbits of this action can then be parametrized by $\C/\Z$. If $\la+\Z\in\C/\Z$, the corresponding orbit is $\mathcal{O}_{\la+\Z}=\{(z-\la+\Z)\}$, in particular we will consider $\CO_\Z=\{(z-\la)~|~\la\in\Z\}$. 
\begin{Remark}\label{rem:splits}Let $\la\in\C/\Z$, then $\CO=\CO_{\la+\Z}$ is a torsion-free orbit, and we have
\begin{align*}\Omega^\times_\CO&=\{t\in\C[z]\setminus \{0\}~|~\si^{1/2}(t)\not\in\bigcup_{\Fm\in\CO}\Fm \}\\
&= \{t=\alpha\prod_{k\in\C}(z-k)^{n_k}~|~\si^{1/2}(t)\not\in\bigcup_{s\in\Z}(z-\la+s)\}\\
&=  \{\alpha\prod_{k\in\C}(z-k)^{n_k}~|~\alpha\prod_{k\in\C}(z-k-1/2)^{n_k}\not\in\bigcup_{s\in\Z}(z-\la+s)\} \\
&= \{\alpha\prod_{k\in\C}(z-k)^{n_k}~|~n_k=0\text{ if }k+1/2\in\la+\Z\}\\
&=\{\alpha\prod_{k\not\in\la+1/2+\Z}(z-k)^{n_k}~|~\alpha\in\C^\times, ~n_k\geq 0\}.
\end{align*}
In all the products here, $n_k>0$ for only finitely many terms.

Then, the short exact sequence of monoids 
$$ 1\to \Omega_\CO^\times \to \Omega\to \Omega/\Omega_\CO^\times \to 1$$
splits, with 
$$\Omega/\Omega_\CO^\times\simeq \Omega':=\{\prod_{s\in\Z}(z-\la-1/2-s)^{n_s}\in\Omega~|~n_s\geq 0\}.$$ 
In particular, the reason this short exact sequence splits is that in a polynomial ring we always have a canonical choice of monic polynomials as representatives of each maximal ideal.
\end{Remark}
To describe $\scA(\Omega)$ and $\scA^{\spl}(\Omega)$, by Proposition \ref{prop:direct-sum}, it is enough to describe $\scA(\Omega,\CO)$ and $\scA^{\spl}(\Omega,\CO)$. Also, by Remark \ref{rem:splits} and Proposition \ref{prop:split-monoid} we have
$\scA(\Omega,\CO)\simeq \C[\Omega_\CO^\times]\otimes \scA(\Omega',\CO)$ (and similarly for $\scA^{\spl}(\Omega,\CO)$)) so it is enough to describe $\scA(\Omega',\CO)$. Since all the orbits $\CO\in\MaxSpec(\C[z])/\Z$ are isomorphic, we will only explicitly examine the case of $\CO=\CO_\Z$, the other orbits will give isomorphic algebras. For the rest of this section, we fix $\CO=\CO_\Z$ and
$$ \Omega':=\{t\in\C[z]~|~t=\prod_{s\in\Z+\frac{1}{2}}(z-s)^{n_s},~n_s\geq 0\}.$$

\subsection{The algebra $\scA(\Omega',\CO)$}

Since $1\in\Omega'$ and $\CO$ is torsion-free, by Proposition \ref{prop:unital} the algebra $\scA(\Omega',\CO)$ is unital, with identity element $[M^1]$, where $M^1\in A(1)\text{-wmod}_\CO$ is the unique simple module.

Let $t\in\Omega'$, $t\neq 1$, then $t=\prod_{s\in\Z+\frac{1}{2}}(z-s)^{n_s}$, and we consider the set of zeros of $t$,
$$ Z(t):=\left\{s\in\Z+\frac{1}{2}~|~n_s>0\right\},$$
which we order and extend with infinities on both sides
$$ \hat{Z}(t):=\{s_0=-\infty<s_1<s_2<\ldots<s_\ell<s_{\ell+1}=\infty~|~s_i\in Z(t), ~i=1,\ldots,\ell=\ell(t)\}.$$
\begin{Proposition}[{\cite{Bavula1992}}]\label{prop:simples}Up to isomorphism, the simple weight modules for $A(t)$ are 
$$\{ M^t_{s_i,s_{i+1}}~|~s_i,s_{i+1}\in\hat{Z}(t),~i=0,\ldots,\ell(t) \}$$
where $\Supp(M^t_{s_i,s_{i+1}})=\{(z-k)~|~k\in\Z,~s_i<k<s_{i+1}\}$. 

Moreover, $(M^t_{s_i,s_{i+1}})_{(z-k)}\simeq \C v_k$ is one dimensional, and the action of $A(t)$ satisfies the following
$$X^+ v_k=0,\text{ if }k+1>s_{i+1}\qquad X^+ v_k\in\C^\times v_{k+1},\text{ if }k+1<s_{i+1}$$
$$X^- v_k=0,\text{ if }k-1<s_{i}\qquad X^- v_k\in\C^\times v_{k-1},\text{ if }k-1>s_{i}.$$
\end{Proposition}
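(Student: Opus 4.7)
The plan is first to translate the defining relations of $A(t)$ into linear-algebra conditions on each weight space, then to construct the modules $M^t_{s_i,s_{i+1}}$ directly, and conversely to show that every simple weight module supported on $\CO_\Z$ arises this way. On the weight space $M_{(z-k)}$ the element $z$ acts as the scalar $k$, so the relations $X^+X^-=\sigma^{1/2}(t)=t(z-1/2)$ and $X^-X^+=\sigma^{-1/2}(t)=t(z+1/2)$ act as the scalars $t(k-1/2)$ and $t(k+1/2)$ respectively, while $X^\pm r=\sigma^{\pm 1}(r)X^\pm$ forces $X^\pm$ to shift the weight by $\pm 1$.

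For the construction, I would put $M^t_{s_i,s_{i+1}}=\bigoplus_{k\in\Z,\,s_i<k<s_{i+1}}\C v_k$ with $zv_k=kv_k$, $X^+v_k=v_{k+1}$ (read as $0$ if $k+1>s_{i+1}$), and $X^-v_k=t(k-1/2)v_{k-1}$ (read as $0$ if $k-1<s_i$). Verifying the TGWA relations is then routine: the boundary vanishings correspond precisely to $t(s_i)=t(s_{i+1})=0$, while in the interior $t(k\pm 1/2)\neq 0$ because no zero of $t$ lies strictly between the consecutive zeros $s_i$ and $s_{i+1}$. Simplicity is immediate, since every $v_k$ generates the module through the nonzero connecting maps, and any nonzero submodule is a direct sum of weight subspaces.

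Conversely, let $M$ be a simple weight module on $\CO_\Z$ and pick $0\neq v\in M_{(z-k)}$. By simplicity $v$ generates $M$, and any monomial in $X^+,X^-$ applied to $v$ reduces, via the two scalar identities above, to some $(X^+)^av$ or $(X^-)^bv$; hence every weight space is at most one-dimensional and the support is an integer interval. If the interval has a top weight $K$, then $X^+v_K=0$ combined with $X^-X^+=t(z+1/2)$ forces $t(K+1/2)=0$, so $K=s_{i+1}-1/2$ for some zero $s_{i+1}$ of $t$; symmetrically a bottom weight is of the form $s_{i'}+1/2$. To show that the zeros $s_{i'},s_{i+1}$ must be consecutive, suppose some zero $s_j$ lay strictly between them. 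Then both $s_j-1/2$ and $s_j+1/2$ lie in the support, and writing $X^+v_{s_j-1/2}=c\,v_{s_j+1/2}$ (with $c\in\C^\times$ because $s_j-1/2$ is not the top) while $X^-X^+v_{s_j-1/2}=t(s_j)v_{s_j-1/2}=0$ forces $X^-v_{s_j+1/2}=0$; but then $v_{s_j+1/2}$ generates a proper nonzero submodule (not containing $v_{s_j-1/2}$), contradicting simplicity. Non-isomorphism of the constructed modules is then clear from their distinct supports, and the explicit action formulas in the statement are read off the construction.

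The only subtle step is the "no gap at an interior zero" argument in the converse direction, which is precisely what ties the classification to \emph{consecutive} pairs in $\hat Z(t)$; the rest of the proof is an elementary linear-algebraic bookkeeping on weight spaces.
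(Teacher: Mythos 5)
The paper does not actually prove this statement: it is cited directly to Bavula \cite{Bavula1992}, so there is no internal argument to compare against. Your proposal supplies a self-contained derivation of the rank-one GWA classification on a torsion-free integral orbit, and it is correct. You translate the relations to the weight-space level properly ($X^+X^-$ acts on weight $k$ as $t(k-\tfrac12)$, $X^-X^+$ as $t(k+\tfrac12)$, with $X^\pm$ shifting by $\pm1$), the construction of $M^t_{s_i,s_{i+1}}$ and the verification of the defining relations via $t(s_i)=t(s_{i+1})=0$ are accurate, and the converse direction correctly exploits that the $\Z$-graded structure $A(t)=\bigoplus_{a\ge 0}R\,(X^+)^a\oplus\bigoplus_{b\ge 1}R\,(X^-)^b$ forces one-dimensional weight spaces and an interval support. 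The one place you should tighten is the assertion that $X^+v_{s_j-1/2}=c\,v_{s_j+1/2}$ with $c\in\C^\times$ ``because $s_j-\tfrac12$ is not the top'': connectedness of the support does not by itself give a nonzero transition. The clean fix, in the spirit of the rest of your argument, is to note that $t(s_j)=0$ gives both $X^+X^-v_{s_j+1/2}=0$ and $X^-X^+v_{s_j-1/2}=0$; since the weight spaces are one-dimensional, at least one of $X^-v_{s_j+1/2}$ or $X^+v_{s_j-1/2}$ must vanish, and in either case the corresponding vector generates a proper nonzero submodule supported only on one side of $s_j$, contradicting simplicity. With that small patch the argument is complete, and the uniqueness-up-to-isomorphism follows as you say by rescaling the $v_k$ so that all interior $X^+$-transitions equal $1$.
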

From Proposition \ref{prop:simples}, the following results follow immediately.
\begin{Corollary}Let $t\in\Omega'$, $t\neq 1$, then
$$K_0(A(t)\text{-wmod}_\CO)\simeq \bigoplus_{i=0}^{\ell(t)} \Z [M^t_{s_i,s_{i+1}}].$$
\end{Corollary}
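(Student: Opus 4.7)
The plan is to prove the two assertions that make $K_0(A(t)\text{-wmod}_\CO)$ a free abelian group with basis the simples of Proposition \ref{prop:simples}: (i) the classes $[M^t_{s_i,s_{i+1}}]$ span $K_0$, and (ii) they are $\Z$-linearly independent.

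For (ii) I would use weight-space characters. Since a morphism of weight modules respects the weight decomposition, for each $k\in\Z$ the functor $M\mapsto M_{(z-k)}$ is exact on $A(t)\text{-wmod}_\CO$, so $\chi_k([M]):=\dim_\C M_{(z-k)}$ yields a group homomorphism $\chi_k:K_0(A(t)\text{-wmod}_\CO)\to\Z$. By Proposition \ref{prop:simples}, $\chi_k([M^t_{s_j,s_{j+1}}])=1$ exactly when $s_j<k<s_{j+1}$ and equals $0$ otherwise. Choosing an integer $k_i\in(s_i,s_{i+1})$ for each $i$ (which exists because $s_i,s_{i+1}$ are half-integers, allowing $s_0=-\infty$ and $s_{\ell(t)+1}=\infty$), the matrix $\bigl(\chi_{k_i}([M^t_{s_j,s_{j+1}}])\bigr)_{i,j}$ is the identity, giving the linear independence.

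For (i), the crucial step is to show every object of $A(t)\text{-wmod}_\CO$ has finite length. A direct computation shows that when $k,k+1$ both lie in $(s_i,s_{i+1})$, both scalars $\sigma^{+1/2}(t)|_{z=k+1}$ and $\sigma^{-1/2}(t)|_{z=k}$ equal $\prod_s (k+1/2-s)^{n_s}$, which is nonzero because $k+1/2\notin Z(t)$ by definition of an interval. Hence $X^+X^-$ and $X^-X^+$ act as the same nonzero scalar on $M_{(z-(k+1))}$ and $M_{(z-k)}$, making $X^+:M_{(z-k)}\to M_{(z-(k+1))}$ a linear bijection. Consequently $d_i:=\dim_\C M_{(z-k)}$ is independent of $k\in\Z\cap(s_i,s_{i+1})$, and is finite by the definition of $A(t)\text{-wmod}_\CO$. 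Any subobject $N\subseteq M$ inherits a dimension vector $(d_0',\ldots,d_{\ell(t)}')\in\Z_{\ge 0}^{\ell(t)+1}$ componentwise bounded by $(d_0,\ldots,d_{\ell(t)})$; the absence of infinite strictly monotone bounded chains in $\Z_{\ge 0}^{\ell(t)+1}$ makes $M$ both noetherian and artinian of length at most $\sum_i d_i$. Jordan--H\"older and Proposition \ref{prop:simples} then give $[M]=\sum_i m_i[M^t_{s_i,s_{i+1}}]$ in $K_0$ for some $m_i\in\Z_{\ge 0}$.

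The main obstacle is the finite-length claim, which rests on the local invertibility of $X^\pm$ between consecutive weight spaces inside an interval of $\Z\setminus Z(t)$; once this structural observation is in hand, combining generation with the character-based independence argument yields the desired presentation $K_0(A(t)\text{-wmod}_\CO)\simeq\bigoplus_{i=0}^{\ell(t)}\Z[M^t_{s_i,s_{i+1}}]$.
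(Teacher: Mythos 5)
Your argument is correct and supplies the standard details behind the paper's assertion that the Corollary ``follows immediately'' from Proposition \ref{prop:simples}: the linear-independence part via the weight-space characters $\chi_k$, and the generation part by showing that every object of $A(t)\text{-wmod}_\CO$ has finite length so that Jordan--H\"older applies. The one structural point you rightly isolate and verify is that, for adjacent integers $k,k+1$ in an interval $(s_i,s_{i+1})$, the scalars $\si^{\pm 1/2}(t)$ take the common nonzero value $t(k+\tfrac12)$, so $X^{\pm}$ are bijections between the corresponding weight spaces; this gives the componentwise-bounded dimension vector for submodules and hence the Artinian/Noetherian property, which is exactly what is needed for $K_0$ to be free on the simple classes.
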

\begin{Corollary}\label{cor:multiplication}
Let $t,u\in\Omega'\setminus\{1\}$, $s_i,s_{i+1}\in\hat{Z}(t)$, $w_j,w_{j+1}\in\hat{Z}(u)$ then
$$ M^t_{s_i,s_{i+1}}\otimes_{\C[z]} M^u_{w_j,w_{j+1}}\simeq \begin{cases} M^{tu}_{\max\{s_i,w_j\},\min\{s_{i+1},w_{j+1}\}} & \text{ if }\max\{s_i,w_j\}<\min\{s_{i+1},w_{j+1}\}, \\
0 & \text{ otherwise.}\end{cases}$$
\end{Corollary}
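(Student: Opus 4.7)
The plan is to pin down the tensor product via the three pieces of data that classify a simple weight module in Proposition~\ref{prop:simples}: its support, its weight-space dimensions, and the behavior of $X^\pm$ between consecutive weight vectors. I would first show that $M^t_{s_i,s_{i+1}}\otimes_{\C[z]} M^u_{w_j,w_{j+1}}$ either vanishes or has the predicted support, then that its nonzero weight spaces are one-dimensional, and finally that the action of $X^\pm(tu)$ coming from $\Delta_{t,u}$ makes it simple and identifies it with $M^{tu}_{p,q}$, where $p:=\max\{s_i,w_j\}$ and $q:=\min\{s_{i+1},w_{j+1}\}$.

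By Lemma~\ref{lemma:weightmod},
\[
\Supp\bigl(M^t_{s_i,s_{i+1}}\otimes_{\C[z]} M^u_{w_j,w_{j+1}}\bigr)=\{(z-k):k\in\Z,\;p<k<q\},
\]
which is empty precisely when $p\ge q$; in that case every weight space, and hence the module, vanishes. Otherwise, by the same lemma each nonzero weight space is the tensor product over $\C[z]/(z-k)\cong\C$ of two one-dimensional $\C$-vector spaces from Proposition~\ref{prop:simples}, and is therefore one-dimensional. Moreover $\hat Z(tu)=\hat Z(t)\cup\hat Z(u)$ as ordered sets, and since $(s_i,s_{i+1})$ contains no zero of $t$ and $(w_j,w_{j+1})$ contains no zero of $u$, the interval $(p,q)$ contains no zero of $tu$; thus $p$ and $q$ are consecutive in $\hat Z(tu)$ and the candidate module $M^{tu}_{p,q}$ is defined and has exactly this support and these weight-space dimensions.

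It remains to match the actions. Applying $\Delta_{t,u}(X^\pm(tu))=X^\pm(t)\otimes X^\pm(u)$ to a generator $v_k\otimes v'_k$ of the $(z-k)$-weight space of the tensor product, one sees that $X^+(tu)$ produces a scalar multiple of $v_{k+1}\otimes v'_{k+1}$ that is nonzero iff $k+1<s_{i+1}$ and $k+1<w_{j+1}$, i.e.\ iff $k+1<q$; the symmetric statement holds for $X^-(tu)$ with $k-1>p$. Hence $X^\pm(tu)$ connect all the one-dimensional weight spaces within the support, so any nonzero submodule (being a direct sum of weight spaces) is the whole module, and by Proposition~\ref{prop:simples} the tensor product is forced to be isomorphic to $M^{tu}_{p,q}$. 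The only mildly delicate point is tracking when the action of $X^\pm(tu)$ vanishes so that the bounds $p$ and $q$ appear in exactly the form prescribed by the classification; everything else is bookkeeping on top of Lemma~\ref{lemma:weightmod} and Proposition~\ref{prop:simples}.
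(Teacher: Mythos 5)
Your argument is correct and is exactly the proof the paper intends when it says the corollary "follows directly" from Proposition \ref{prop:simples}: use Lemma \ref{lemma:weightmod} to compute the support and weight-space dimensions, note that $\hat Z(tu)=\hat Z(t)\cup\hat Z(u)$ forces $p=\max\{s_i,w_j\}$ and $q=\min\{s_{i+1},w_{j+1}\}$ to be consecutive in $\hat Z(tu)$, and then match the $X^\pm$ actions under $\Delta_{t,u}$ against the classification. Since the paper leaves the proof implicit, your write-up is if anything more complete than the source, and I see no gaps.
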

We can then give a description for the algebra $\scA(\Omega',\CO)$ in terms of a basis.
\begin{Theorem}As $\Omega'$-graded vector spaces, we have
$$\scA(\Omega',\CO)\simeq \C[M^1]\oplus \bigoplus_{t\in\Omega',~t\neq 1}\bigoplus_{i=0}^{\ell(t)}\C[M^t_{s_i,s_{i+1}}]$$
with multiplication in $\scA(\Omega',\CO)$ given by Corollary \ref{cor:multiplication} and the fact that $[M^1]$ is the identity.
\end{Theorem}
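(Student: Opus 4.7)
The plan is to assemble results already established in the paper: the theorem is essentially a bookkeeping restatement, and no real computation remains. First, the underlying $\Omega'$-graded vector space decomposition follows immediately from the definition
\[
\scA(\Omega',\CO) \;=\; \bigoplus_{t\in\Omega'} \C\otimes_\Z K_0(A(t)\text{-wmod}_\CO),
\]
once each graded component is identified. For $t\neq 1$, this identification is exactly the content of the Corollary immediately preceding the theorem, namely $K_0(A(t)\text{-wmod}_\CO)\simeq \bigoplus_{i=0}^{\ell(t)}\Z[M^t_{s_i,s_{i+1}}]$, which rests on the classification of simple weight modules in Proposition \ref{prop:simples}. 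For $t=1$, since $\sigma^{1/2}(1)=1$ is invertible in every localization of $R$ and $\CO$ is torsion-free, Theorem \ref{thm:wmodO}(d) tells us that $A(1)\text{-wmod}_\CO$ is semisimple with a unique simple object $M^1$, so that $K_0(A(1)\text{-wmod}_\CO)\simeq \Z[M^1]$. Summing these contributions across all $t\in\Omega'$ yields the claimed $\Omega'$-graded vector space isomorphism.

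Next, I address the multiplicative description. By construction, multiplication on $\scA(\Omega',\CO)$ is the bilinear extension of the map \eqref{eq:k0m}, sending $[M]\otimes[M']$ to $[M\otimes_R M']$, where the tensor product is regarded as an $A(tt')$-module via $\Delta_{t,t'}$. Having exhibited an explicit basis consisting of classes of simple modules, it suffices to compute products of pairs of basis elements. These are computed in Corollary \ref{cor:multiplication}: the tensor product of two simples is either another simple $M^{tu}_{\max\{s_i,w_j\},\min\{s_{i+1},w_{j+1}\}}$ or the zero module, so its Grothendieck class is precisely the corresponding basis element (or $0$). This is exactly the multiplication rule asserted in the theorem. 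The claim that $[M^1]$ is the identity of $\scA(\Omega',\CO)$ is Proposition \ref{prop:unital}, applied to the torsion-free orbit $\CO$ with $\Bbo=1\in\Omega'$.

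There is essentially no obstacle in this proof; the main conceptual content lies in the earlier Propositions \ref{prop:simples}, \ref{prop:unital} and Corollary \ref{cor:multiplication}, and the theorem serves to package those results into a single clean statement describing $\scA(\Omega',\CO)$ as a graded algebra with an explicit basis and structure constants.
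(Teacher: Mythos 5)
Your proposal is correct and follows exactly the same route as the paper, whose proof here is just ``This follows directly from the above''; you have simply spelled out the bookkeeping. The appeals to the preceding Corollary, to Corollary \ref{cor:multiplication}, to Theorem \ref{thm:wmodO}(d), and to Proposition \ref{prop:unital} are all the right citations and match the paper's intent.
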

\begin{proof}
This follows directly from the above.
\end{proof}
\begin{Theorem}We have an isomorphism of $\Omega'$-graded algebras
$$\scA(\Omega',\CO)\simeq \C[x_s^{\pm}~|~s\in\Z+\tfrac{1}{2}]/(x_s^-x_w^+~|~ s\leq w)$$
given by 
$$ x_s^-\mapsto [M^{z-s}_{-\infty,s}],\qquad x_s^+\mapsto  [M^{z-s}_{s,\infty}].$$
\end{Theorem}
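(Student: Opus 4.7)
The plan is to invoke the universal property of the polynomial ring: since $\scA(\Omega',\CO)$ is commutative (this follows from Theorem~\ref{thm:coprod}(c), as noted in the Remark after the definition of $\scA$), the assignment $x_s^-\mapsto [M^{z-s}_{-\infty,s}]$, $x_s^+\mapsto [M^{z-s}_{s,\infty}]$ extends uniquely to a $\C$-algebra homomorphism $\phi:\C[x_s^\pm\mid s\in\Z+\tfrac{1}{2}]\to\scA(\Omega',\CO)$. With the grading $\deg x_s^\pm=z-s$, the map $\phi$ is $\Omega'$-graded.

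The first substantive step is to verify that $\phi$ kills each generator $x_s^-x_w^+$ of the relation ideal (for $s\le w$). By Corollary~\ref{cor:multiplication},
\[
[M^{z-s}_{-\infty,s}]\cdot[M^{z-w}_{w,\infty}]=[M^{(z-s)(z-w)}_{\max\{-\infty,w\},\min\{s,\infty\}}]=[M^{(z-s)(z-w)}_{w,s}],
\]
which vanishes precisely when $w\ge s$. Hence $\phi$ induces a graded algebra map $\bar\phi$ from the quotient $\C[x_s^\pm]/(x_s^-x_w^+\mid s\le w)$ into $\scA(\Omega',\CO)$, and it suffices to show $\bar\phi$ is bijective in every $\Omega'$-degree.

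Fix $t=\prod_{j=1}^{\ell}(z-s_j)^{n_{s_j}}\in\Omega'\setminus\{1\}$ with $s_1<\cdots<s_\ell$; by the preceding Corollary the degree-$t$ component of the target has $\C$-basis $\{[M^t_{s_i,s_{i+1}}]\}_{i=0}^{\ell}$ (setting $s_0=-\infty$, $s_{\ell+1}=\infty$), of dimension $\ell+1$. On the source side a degree-$t$ monomial has the form $\mu=\prod_j(x_{s_j}^+)^{a_j}(x_{s_j}^-)^{b_j}$ with $a_j+b_j=n_{s_j}$. The relations force $\mu=0$ whenever $x_{s_j}^-$ occurs together with some $x_{s_k}^+$ with $k\ge j$; in particular $a_jb_j=0$ for each $j$, so every index carries either only $+$'s or only $-$'s. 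A pair $j<k$ with $b_j>0$ and $a_k>0$ is likewise forbidden, so the sign pattern is a cutoff and the surviving monomials of degree $t$ are exactly
\[
m_i^t=\prod_{j\le i}(x_{s_j}^+)^{n_{s_j}}\cdot\prod_{j>i}(x_{s_j}^-)^{n_{s_j}},\qquad 0\le i\le\ell,
\]
which therefore span the degree-$t$ piece of the quotient.

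Finally, a straightforward induction on the exponents using Corollary~\ref{cor:multiplication} gives $[M^{z-s}_{s,\infty}]^{n}=[M^{(z-s)^n}_{s,\infty}]$ and $[M^{z-s}_{-\infty,s}]^{n}=[M^{(z-s)^n}_{-\infty,s}]$; one more application yields
\[
\bar\phi(m_i^t)=[M^t_{s_i,s_{i+1}}],
\]
because the intersection of the supports $(s_j,\infty)$ for $j\le i$ with $(-\infty,s_j)$ for $j>i$ is precisely $(s_i,s_{i+1})$. So $\bar\phi$ maps $\ell+1$ spanning elements onto a basis of size $\ell+1$, forcing both families to be bases and $\bar\phi$ to be an isomorphism in degree $t$; the case $t=1$ reduces to $\phi(1)=[M^1]$, which is the unit of $\scA(\Omega',\CO)$ by Proposition~\ref{prop:unital}. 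I expect the main obstacle to be the combinatorial argument identifying the $m_i^t$ as the only surviving monomials in the quotient; once that is established, the dimension match and the identification of $\bar\phi(m_i^t)$ are routine applications of Corollary~\ref{cor:multiplication}.
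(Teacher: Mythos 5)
Your argument is correct, and it follows the same overall strategy as the paper (define the map on the polynomial ring, check that the relation ideal goes to zero via Corollary~\ref{cor:multiplication}, and then analyze degree by degree), but the final step is organized slightly differently. The paper first establishes surjectivity by an induction on $\deg t$, then explicitly claims that the monomials with a ``sign cutoff'' form a $\C$-basis of the quotient, and finishes by showing that their images are linearly independent. You instead show only that the cutoff monomials $m_i^t$ \emph{span} the degree-$t$ component of the quotient, then compute $\bar\phi(m_i^t)=[M^t_{s_i,s_{i+1}}]$ and use the dimension count $\dim_{\C}\scA(\Omega',\CO)_t=\ell+1$ to conclude in one stroke that $\bar\phi$ is bijective in degree $t$. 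This simultaneously gives surjectivity, injectivity, and (as a byproduct) that the $m_i^t$ are a basis, so you avoid having to verify independently that the spanning set is a basis and that $\alpha$ is surjective. That is a mild but genuine streamlining, and the underlying combinatorics (the cutoff structure of the surviving monomials, the identification of $\bar\phi(m_i^t)$ via intersection of supports) are the same as in the paper.
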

\begin{proof}
Consider the map 
$$\alpha: \C[x_s^{\pm}~|~s\in\Z+\tfrac{1}{2}]\to \scA(\Omega',\CO),\qquad x_s^-\mapsto [M^{z-s}_{-\infty,s}],\qquad x_s^+\mapsto  [M^{z-s}_{s,\infty}].$$
We show that $\alpha$ is surjective by proving that $\scA(\Omega',\CO)$ is generated as an algebra by $[M^{z-s}_{-\infty,s}]$, $[M^{z-s}_{s,\infty}]$, $s\in\Z+\frac{1}{2}$. We will proceed by induction on the degree of $t$. If $t\in\Omega'$, $\deg t=1$, then $t=z-s$, $s\in\Z +\frac{1}{2}$ and  $[M^{z-s}_{-\infty,s}]$, $[M^{z-s}_{s,\infty}]$ form a basis for $K_0(A(t)\text{-wmod}_\CO)$. Now suppose that $\deg t>1$, $t=(z-s_1)^{n_1}\cdots (z-s_\ell)^{n_\ell}$, $ n_i\geq 1$, $i=1,\ldots,\ell$. Let $\bar{t}=\frac{t}{z-s_1}$. If $n_1>1$, then $\hat{Z}(\bar{t})=\hat{Z}(t)$ and
$$ M^t_{s_i,s_{i+1}}\simeq\begin{cases}M^{\bar{t}}_{-\infty,s_{1}}\otimes_{\C[z]}M^{z-s_1}_{-\infty,s_1} & \text{ if }i=0, \\ 
M^{\bar{t}}_{s_i,s_{i+1}}\otimes_{\C[z]} M^{z-s_1}_{s_1,\infty} & \text{ if }i\geq 1.\end{cases}$$
If $n_1=1$, then $\hat{Z}(\bar{t})=\{-\infty<s_2<s_3<\ldots s_\ell<\infty\}$ and hence we have
$$ M^t_{s_i,s_{i+1}}\simeq\begin{cases}M^{\bar{t}}_{-\infty,s_2}\otimes_{\C[z]}M^{z-s_1}_{-\infty,s_1} & \text{ if }i=0, \\ 
M^{\bar{t}}_{-\infty,s_2}\otimes_{\C[z]}M^{z-s_1}_{s_1,\infty} & \text{ if }i=1, \\ 
M^{\bar{t}}_{s_i,s_{i+1}}\otimes_{\C[z]} M^{z-s_1}_{s_1,\infty} & \text{ if }i\geq 2.\end{cases}$$
The claim follows then by the inductive hypothesis, since $\deg\bar{t}<\deg t$.

By Corollary \ref{cor:multiplication}, if $s\leq w$, we have 
$$[M^{z-s}_{-\infty,s}]\cdot  [M^{z-w}_{w,\infty}]=[M^{z-s}_{-\infty,s}\otimes_{\C[z]}M^{z-w}_{w,\infty}]=0$$
hence $(x_s^-x_w^+~|~ s\leq w)\subset\ker(\alpha)$ and we get an induced map
$$ \C[x_s^{\pm}~|~s\in\Z+\tfrac{1}{2}]/(x_s^-x_w^+~|~ s\leq w)\ronto \scA(\Omega',\CO)$$
and the result will follow from the fact that this induced map is injective. Notice that a basis for $\C[x_s^{\pm}~|~s\in\Z+\tfrac{1}{2}]/(x_s^-x_w^+~|~ s\leq w)$ is given by
$$ \{ (x^+_{w_1})^{n_1}(x^+_{w_2})^{n_2}\cdots (x^+_{w_a})^{n_a}~|~w_1<w_2<\cdots<w_a, ~n_i\geq 1\}\cup $$
$$ \cup\{ (x^-_{s_1})^{m_1}(x^-_{s_2})^{m_2}\cdots (x^-_{s_b})^{m_b}~|~s_1<s_2<\cdots<s_b,~m_j\geq 1\}\cup $$
$$ \cup \{ (x^+_{w_1})^{n_1}\cdots (x^+_{w_a})^{n_a}(x^-_{s_1})^{m_1}\cdots (x^-_{s_b})^{m_b}~|~w_1<\cdots<w_a<s_1<\cdots s_b,~n_i,m_j\geq 1\}\cup\{1\}.$$
Then 
\begin{align*} (x^+_{w_1})^{n_1}(x^+_{w_2})^{n_2}\cdots (x^+_{w_a})^{n_a}&\mapsto \left[M^{\prod_{i=1}^a(z-w_i)^{n_i}}_{w_a,\infty}\right]\\
(x^-_{s_1})^{m_1}(x^-_{s_2})^{m_2}\cdots (x^-_{s_b})^{m_b}&\mapsto \left[M^{\prod_{j=1}^b(z-s_j)^{m_j}}_{-\infty,s_1}\right]\\
(x^+_{w_1})^{n_1}\cdots (x^+_{w_a})^{n_a}(x^-_{s_1})^{m_1}\cdots (x^-_{s_b})^{m_b}&\mapsto \left[M^{\prod_{i=1}^a(z-w_i)^{n_i}\prod_{j=1}^b(z-s_j)^{m_j}}_{w_a,s_1}\right]\\
1 &\mapsto [M^1]
\end{align*}
which shows that the image of a basis is linearly independent, hence the map is injective, which concludes the proof.
\end{proof}

\subsection{The algebra $\scA^{\operatorname{split}}(\Omega',\CO)$}

%

Indecomposable weight modules over a rank one generalized Weyl algebra were classified in \cite{DGO}. 
In the case of torsion-free orbit the simple modules are determined by their support. However the indecomposable modules are not, and it will be convenient for our purpose of the description of tensor products to introduce a notion of ``directed'' subsets. We only need to consider subsets of $\mathbb{R}$.

\subsubsection{Directed subsets}

A \emph{directed subset} of $\mathbb{R}$ is a subset  where some of the elements have a directionality (left/right) to their membership. Formally, a directed subset $S$ of $\R$ is a function $S:\mathbb{R}\to \{0,1,\rightarrow,\leftarrow\}$.
We write $k\in S$ if $S(k)\in\{1,\rightarrow,\leftarrow\}$ and $k\notin S$ if $S(k)=0$. We further write $k\overset{\rightarrow}{\in}S$ if $S(k)=\rightarrow$ and $k\overset{\leftarrow}{\in}S$ if $S(k)=\leftarrow$. We call $k$ a \emph{directed element of $S$} if $S(k)\in\{\rightarrow,\leftarrow\}$.
We say $k$ is an \emph{undirected element of $S$} if $S(k)=1$.

The \emph{underlying set} of $S$ is defined to be $\overset{\circ}{S}:=S^{-1}(\{1,\rightarrow,\leftarrow\})$.


Any subset of $\R$ can be thought of as a directed subset taking values in $\{0,1\}$.
The \emph{intersection} $S\cap T$ of any two directed subsets $S$ and $T$ is defined to be
\begin{equation}
(S\cap T)(k)=S(k)\cdot T(k)\quad \forall k\in\R,
\end{equation}
where $\cdot$ is the commutative binary operation on $\{0,1,\rightarrow,\leftarrow\}$ satisfying
\begin{equation}
1\cdot x=x,\quad  0\cdot x=0,\quad x\cdot x=x,\quad
\rightarrow\cdot\leftarrow=0.
\end{equation}


We will be interested in directed subsets of $\R$ whose underlying sets are open, and directed elements are half-integers.

\begin{Example}
Let $S$ be the directed subset $S$ of $\R$ given by
\[\overset{\circ}{S}=\left(\frac{1}{2},\infty\right),\quad \frac{3}{2}\overset{\rightarrow}{\in}S,\quad  \frac{5}{2}\overset{\leftarrow}{\in}S, \quad \frac{7}{2}\overset{\leftarrow}{\in}S\]
and remaining elements undirected.
Similarly let $T$ be given by 
\[\overset{\circ}{T}=\left(-\infty,\frac{9}{2}\right),\quad -\frac{1}{2}\overset{\leftarrow}{\in}T,\quad \frac{3}{2}\overset{\leftarrow}{\in}T,\quad \frac{7}{2}\overset{\leftarrow}{\in}T\]
and remaining elements undirected.
Let $U=S\cap T$. Then
\[ \overset{\circ}{U} = \left(\frac{1}{2},\frac{3}{2}\right)\cup\left(\frac{3}{2},\frac{9}{2}\right),\quad  \frac{5}{2}\overset{\leftarrow}{\in} U,\quad \frac{7}{2}\overset{\leftarrow}{\in} U
\]
and remaining elements undirected.
\end{Example}

We say $S$ and $T$ are \emph{strongly disjoint} if the underlying sets of $S$ and $T$ are disjoint.
The \emph{union} $S\cup T$ of $S$ and $T$, defined when $S$ and $T$ are strongly disjoint, is defined to be
\begin{equation}
(S\cup T)(x)=S(x)+T(y), \qquad \text{where $x+0=x=x+0$ for $x\in\{0, 1,\rightarrow,\leftarrow\}$.}
\end{equation}

Lastly, we say that a directed subset $S$ of $\R$ is \emph{connected} if $\overset{\circ}{S}$ is a connected subset of $\R$.



\subsubsection{Semi-indecomposable modules}
It turns out it is easier to describe the tensor product rule if we generalize indecomposable modules to what we call semi-indecomposable modules. The reason is that the class of indecomposable modules is not closed under the tensor product, but the wider class of semi-indecomposable modules is.

\begin{Definition}
A module is \emph{semi-indecomposable} if it is a direct sum of pairwise non-isomorphic indecomposable modules.
\end{Definition}

To describe these we need admissible directed subsets.
\begin{Definition}
Let $t\in\Omega'$ be a monic polynomial with zero set equal to $\{k_1,k_2,\ldots,k_r\}\subseteq\Z+\frac{1}{2}$.
A directed subset $S$ of $\mathbb{R}$ is \emph{$t$-admissible} if it satisfies the following conditions:
\begin{enumerate}[{\rm (i)}]
\item The underlying set of $S$ is an intersection of open intervals of the form $(k_i,\infty)$ and $(-\infty,k_i)$.
\item The directed elements of $S$ are exactly the elements of $\overset{\circ}{S}\cap \{k_1,k_2,\ldots,k_r\}$.
\end{enumerate}
The set of $t$-admissible directed subsets of $\mathbb{R}$ will be denoted by $\mathcal{P}^{\mathrm{dir}}_t(\mathbb{R})$.
\end{Definition}

We have
\begin{align}
S\cap S' & \in \mathcal{P}^{\mathrm{dir}}_{tt'}(\R) 
\quad\text{for all $(S,S') \in \mathcal{P}^{\mathrm{dir}}_{t}(\R) 
\times \mathcal{P}^{\mathrm{dir}}_{t'}(\R)$} \\
S\cup S' & \in \mathcal{P}^{\mathrm{dir}}_{t}(\R) 
\quad\text{for all strongly disjoint $S,S' \in \mathcal{P}^{\mathrm{dir}}_{t}(\R)$.}
\end{align}

The point now is that  $\mathcal{P}^{\mathrm{dir}}_t(\mathbb{R})$ precisely parametrize semi-indecomposable modules for a generalized Weyl algebra $A(t)$.
This part is a direct consequence of a special case of \cite{DGO}. But moreover, the tensor product simply corresponds to intersection. More precisely, we have the following:

\begin{Theorem}\label{thm:a-theorem-about-modules}
Let $R=\C[z]$, $\si(z)=z-1$, and
\[t=(z-k_1)^{m_1}(z-k_2)^{m_2}\cdots (z-k_r)^{m_r}
\in\C[z],\]
where $r\in\Z_{\ge 0}$, $k_i\in \Z+\frac{1}{2}$, $m_i\in\Z_{>0}$. (So $t=1$ if $r=0$.) Let $\CO=\{(z-x)\mid x\in\Z\}$ be the integral orbit in $\Specm(\C[z])$ under the action of $\langle\si\rangle$. Let $A(t)=R(\si,t)$ denote the corresponding generalized Weyl algebra.

\begin{enumerate}[{\rm (a)}]
\item 
For each $t$-admissible directed subset $S$ of $\mathbb{R}$ there is a unique (up to isomorphism) semi-indecomposable object $M_S^t$ in $A(t)\mathrm{-wmod}_\CO$ such that
(i) $\Supp(M_S^t)=S\cap \Z$;
(ii) If $v\in M_S^t$ is a nonzero weight vector of weight $k_i-\frac{1}{2}$ then $X^+v\neq 0$ iff $S(k_i)=\rightarrow$;
(iii) If $v\in M_S^t$ is a nonzero weight vector of weight $k_i+\frac{1}{2}$ then $X^-v\neq 0$ iff $S(k_i)=\leftarrow$;

\item The assigment $S\mapsto [M_S^t]$ is a bijection between the set of
$t$-admissible directed subset $S$ of $\mathbb{R}$
and the set of isomorphism classes of semi-indecomposable objects
in $A(t)\mathrm{-wmod}_\CO$. Moreover, $M^t_S$ is indecomposable iff the underlying set of $S$ is connected.

\item If $t$ and $t'$ are two monic polynomials in $\C[z]$ with zero sets being finite subsets of $\Z+\frac{1}{2}$, and if $S\in\mathcal{P}^{\mathrm{dir}}_t(\mathbb{R})$ and $S'\in \mathcal{P}^{\mathrm{dir}}_{t'}(\mathbb{R})$ then
\begin{equation} \label{eq:tensor-product-rule}
M_S^t\otimes_{\C[z]} M_{S'}^{t'} \cong M_{S\cap S'}^{tt'}
\end{equation}
as $A(tt')$-modules.
\item For any submonoid $\Ga$ of the monoid of monic polynomials with half-integer roots, there is a $\C$-algebra isomorphism
\begin{align}
\mathscr{A}^{\mathrm{split}}(\Gamma,\mathcal{O})
&\cong \C[x_{S,t}\mid t\in \Ga, \; S\in\mathcal{P}^{\mathrm{dir}}_t(\R)]/(\mathrm{Rels})\\
K_0^{\mathrm{split}}(A(t)\mathrm{-wmod}_\CO)\ni[M_S^t] &\mapsto x_{S,t}
\end{align}
where the relations are given by
\begin{align}
x_{S,t}x_{T,t'} &=x_{S\cap T,tt'}  \label{eq:rels-type-1}\\
x_{S,t}+x_{T,t}&=x_{S\cup T,t} \quad\text{when $S$ and $T$ are strongly disjoint.} \label{eq:rels-type-2}
\end{align}
\end{enumerate}
\end{Theorem}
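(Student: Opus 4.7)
Parts (a)--(c) will be proved in order, and (d) will follow readily from them. Parts (a) and (b) rest on the Drozd--Guzner--Ovsienko classification of indecomposable weight modules over rank-one generalized Weyl algebras \cite{DGO}, together with a combinatorial translation into admissible directed subsets. For (a), given a $t$-admissible $S$, I would construct $M_S^t$ directly: place a one-dimensional weight space $\C v_k$ at each $k\in\overset{\circ}{S}\cap\Z$, let $z$ act by $k$ on $v_k$, and set $X^+v_k=\alpha_k v_{k+1}$, $X^-v_{k+1}=\beta_{k+1}v_k$ whenever both $k$ and $k+1$ lie in $\overset{\circ}{S}$. The TGWA relations force $\alpha_k\beta_{k+1}=t(k+1/2)$; at half-integers $k_i$ that are not roots of $t$ both scalars are nonzero and normalizable, while at a root $k_i$ the directedness $S(k_i)\in\{\rightarrow,\leftarrow\}$ specifies which of $\alpha_k,\beta_{k+1}$ vanishes. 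Any two choices of normalizations are equivalent by a diagonal rescaling of the $v_k$, yielding uniqueness up to isomorphism. For (b), the DGO classification says each indecomposable object of $A(t)\text{-wmod}_\CO$ has one-dimensional weight spaces supported on an interval of consecutive integers, with the terminations and one-way jumps of the chain governed precisely by zeros of $t$. This data repackages into a connected $t$-admissible $S$, and Krull--Schmidt extends the bijection to semi-indecomposables via strongly disjoint unions of connected components of $\overset{\circ}{S}$.

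For (c), Lemma \ref{lemma:weightmod} identifies the weight space $(M_S^t\otimes_{\C[z]}M_{S'}^{t'})_{(z-k)}$ with $(M_S^t)_{(z-k)}\otimes_\C (M_{S'}^{t'})_{(z-k)}$, which is one-dimensional exactly when $k\in(S\cap S')\cap\Z$, identifying the support. Under $\Delta_{t,t'}$ the generator $X^+(tt')$ acts on $v_k\otimes v'_k$ as $\alpha_k\alpha'_k\cdot v_{k+1}\otimes v'_{k+1}$, vanishing iff one of the two factors does; similarly for $X^-$. Comparing with the multiplication rule $(S\cap S')(k_i)=S(k_i)\cdot S'(k_i)$ at half-integers, one checks case-by-case that the resulting action matches the characterization of $M_{S\cap S'}^{tt'}$ given by (a), including the delicate case $\rightarrow\cdot\leftarrow=0$ in which $X^+$ vanishes on one factor and $X^-$ on the other, so neither can carry a vector across $k_i$ and the module decomposes there.

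For (d), by (b) the classes $[M_S^t]$ with connected $S$ form a basis of $K_0^{\text{split}}(A(t)\text{-wmod}_\CO)$, and by Krull--Schmidt the class of any semi-indecomposable $M_S^t$ is the sum of the indecomposable summand classes, which is exactly relation \eqref{eq:rels-type-2}. Relation \eqref{eq:rels-type-1} is the tensor product rule from (c). Linear independence of the indecomposable classes plus surjectivity of the evident map from the polynomial ring gives the presentation. \textbf{Main obstacle.} The technical heart is the bookkeeping at half-integer roots $k_i$ inside $\overset{\circ}{S}$: one must correctly match the four outcomes $\{0,1,\rightarrow,\leftarrow\}$ to the vanishing pattern of the connecting scalars $\alpha_k,\beta_{k+1}$, and verify that the $\cdot$-operation on directions is compatible with the tensor product action at every such $k_i$. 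Once this dictionary is in place, all four parts follow.
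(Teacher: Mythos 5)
Your proposal is correct and follows essentially the same route as the paper: parts (a)--(b) rest on the DGO classification (which you unpack into an explicit chain construction rather than simply citing the reference, but the content is identical), part (c) is verified by showing the tensor product satisfies the defining properties of $M_{S\cap S'}^{tt'}$ via Lemma \ref{lemma:weightmod} and the $\Delta$-action on weight vectors, and part (d) uses the basis of indecomposable classes per graded piece together with surjectivity of the map from the polynomial ring. The one thing to tighten is the uniqueness argument in (a): you should note explicitly that, once the vanishing pattern of the connecting scalars $\alpha_k,\beta_{k+1}$ is pinned down by $S$ (with exactly one of them zero at each root $k_i\in\overset{\circ}{S}$), any two choices differ by a diagonal change of basis, which is the rescaling you sketch.
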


\begin{proof}
(a) and (b) are immediate by the classification of indecomposable weight modules over GWAs from \cite{DGO}.

(c) Put $W=M_S^t\otimes_{\C[z]} M_{S'}^{t'}$. By part (a) it suffices to check that $W$ satisfies properties (i)-(iii) from part (a) with respect to the $tt'$-admissible directed subset $S\cap S'$. By Lemma \ref{lemma:weightmod}, $W$ is a weight module with support equal to $\Supp\big(M_{S}^{t}\big)\cap \Supp\big(M_{S'}^{t'}\big)=(S\cap \Z)\cap (S'\cap\Z)=(S\cap S')\cap \Z$. Suppose $v\otimes v'$ is a nonzero weight vector in $W$ of weight $k_i-\frac{1}{2}$. Then $X^+(v\otimes v')=(X^+v)\otimes(X^+v')$ which is nonzero iff $S(k_i)=\rightarrow$ and $S'(k_i)=\rightarrow$. By our definition of intersection of directed subsets, this is equivalent to $(S\cap S')(k_i)=\rightarrow$. Similarly $X^-(v\otimes v')$ is nonzero iff $(S\cap S')(k_i)=\leftarrow$. 

(d) Let $B= \C[x_{S,t} \mid t\in \Ga, S\in\mathcal{P}^{\mathrm{dir}}_t(\R)]$ and define a map 
\[
\Phi: B\to\mathscr{A}^{\mathrm{split}}(\Ga,\mathcal{O})
\]
by
\[x_{S,t}\mapsto [M_S^t].\]
$\Phi$ is surjective: Every object is a finite sum of indecomposables. Every indecomposable is of the form $M_S^t$ by part (a).

(Rels)$\subseteq \ker\Phi$: Relations \eqref{eq:rels-type-1} belong to $\ker\Phi$ by \eqref{eq:tensor-product-rule}.
If $S$ and $T$ are strongly disjoint $t$-admissible directed subsets of $\R$, then $M^t_{S\cup T}\cong M^t_S\oplus M^t_T$ by the fact that $\Supp(M^t_{S\cup T})=Z\cap(S\cup T)=(\Z\cap S)\cup (\Z\cup T)=\Supp(M^t_S\oplus M^t_T)$ using part (a). So Relations \eqref{eq:rels-type-2} also belong to $\ker(\Phi)$.

Thus we get an induced surjective map
\[
\tilde\Phi: B/(\mathrm{Rels})\to \mathscr{A}^{\mathrm{split}}(\Ga,\mathcal{O}).
\]

$\tilde\Phi$ is injective: Since $\tilde\Phi$ is a map of $\Ga$-graded algebras, it suffices to show that $\tilde\Phi$ is injective on each homogeneous component $B_t$ for $t\in \Ga$. We define an inverse map $\Psi_t:\mathscr{A}^{\mathrm{split}}(\Ga,\mathcal{O})_t\to B_t$ as follows. 
We have $\mathscr{A}^{\mathrm{split}}(\Ga,\mathcal{O})_t=K_0^{\mathrm{split}}(A(t)\textrm{-wmod}_\CO)$ which is a free abelian group on the set of isoclasses of indecomposables in $A(t)\textrm{-wmod}_\CO$. By part (a), any such indecomposable module is of the form $M_S^t$ where $S$ is a $t$-admissible directed subset of $\R$ whose underlying set is connected. Define $\Psi_t([M_S^t])=x_{S,t}$ and extend additively. We have $\Psi_t\tilde\Phi(x_{S,t})=x_{S,t}$ which proves that $\tilde\Phi$ is injective.
\end{proof}

\begin{Lemma} \label{lem:a-lemma-about-directed-sets}
Let $t=\prod_{i=1}^r (z-k_i)^{m_i}$ be any monic polynomial with half-integer roots $k_i\in\Z+\frac{1}{2}$, $k_1<k_2<\cdots<k_r$. Then any connected $t$-admissible directed subset $S$ of $\R$ can be written as an intersection
\begin{equation}
S=S_1\cap S_2\cap \cdots \cap S_r
\end{equation}
where $S_i$ is $(z-k_i)$-admissible and
where the underlying set of $S_i$ is one of $(k_i,\infty)$, $(-\infty,k_i)$, $\R$.
Moreover, this decomposition is unique, if we choose $(k_i,\infty)$ or $(-\infty,k_i)$ over $\R$, when possible.
\end{Lemma}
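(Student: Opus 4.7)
The plan is to construct the $S_i$'s directly from the shape of $\overset{\circ}{S}$, verify the intersection identity by inspecting the underlying set and each directed point separately, and then read off uniqueness from the allowed options at each index.

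First I would observe that, since $S$ is connected and $t$-admissible, $\overset{\circ}{S}$ is a nonempty open interval $(a,b)$ with $a\in\{-\infty\}\cup\{k_1,\ldots,k_r\}$ and $b\in\{k_1,\ldots,k_r\}\cup\{+\infty\}$: a connected intersection of rays of the form $(k_i,\infty)$ and $(-\infty,k_j)$ in $\R$ must have this shape. For each $i$ I would then set
\begin{equation*}
\overset{\circ}{S_i}=\begin{cases}(k_i,\infty) & \text{if } k_i\le a,\\ (-\infty,k_i)& \text{if } k_i\ge b,\\ \R& \text{if } a<k_i<b,\end{cases}
\end{equation*}
and in the last case assign $S_i(k_i):=S(k_i)$, which is a genuine direction (not $0$ or $1$) because every $k_i\in\overset{\circ}{S}$ is by $t$-admissibility a directed element of $S$. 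Each $S_i$ is $(z-k_i)$-admissible by inspection.

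Next, to verify $S=S_1\cap\cdots\cap S_r$, I would check underlying sets and directed values separately. Every $\overset{\circ}{S_i}$ contains $(a,b)$; the factors of the first type intersect to $(a,\infty)$ (taken to be $\R$ if there are none, i.e.\ if $a=-\infty$) and those of the second type to $(-\infty,b)$, giving $\bigcap_i \overset{\circ}{S_i}=(a,b)=\overset{\circ}{S}$. For a directed point $k_j\in\overset{\circ}{S}$, only $S_j$ has $k_j$ as a directed element, with value $S(k_j)$; for $i\neq j$ we have $k_j\in\overset{\circ}{S_i}$ but $k_j\neq k_i$, so $S_i(k_j)=1$. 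The rule $1\cdot x=x$ from the definition of the intersection of directed subsets then yields $(S_1\cap\cdots\cap S_r)(k_j)=S(k_j)$, as required.

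Finally, for uniqueness under the stated preference, the case $a<k_i<b$ forces $\overset{\circ}{S_i}=\R$ outright, since any ray option excludes $k_i$ while the intersection must contain $k_i$. In the remaining case $k_i\notin\overset{\circ}{S}$, the ray containing $(a,b)$ is admissible (and the preference rule selects it): $(k_i,\infty)$ when $k_i\le a$, $(-\infty,k_i)$ when $k_i\ge b$; the opposite ray is ruled out because it fails to contain $(a,b)$. The main point requiring care is the intersection computation in the previous paragraph, where one must confirm that the directedness at each $k_j$ is reproduced by the intersection---this is precisely what the identity $1\cdot S(k_j)=S(k_j)$ in the directed-intersection operation ensures.
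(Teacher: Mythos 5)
Your proof is correct and follows essentially the same route as the paper's: observe $\overset{\circ}{S}$ is an open interval with endpoints in $\{-\infty,k_1,\ldots,k_r,\infty\}$, define $S_i$ as the appropriate ray when $k_i$ lies outside the interval and as $\R$ (with the inherited direction) when $k_i$ lies inside, verify the intersection, and then read off uniqueness from the fact that the ray is forced on the outside (up to the stated preference over $\R$) and $\R$ with direction $S(k_i)$ is forced on the inside. Your write-up is in fact somewhat more explicit than the paper's --- you spell out the intersection check on the underlying set and on the directed values separately, including that $1\cdot x=x$ keeps each inherited direction intact --- where the paper simply asserts ``then clearly $S=S_1\cap\cdots\cap S_r$.''
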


\begin{proof}
We have $\overset{\circ}{S}=(k_i,k_j)$ where $0\le i<j\le r+1$ where we put $k_0=-\infty$ and $k_{r+1}=\infty$. 
For $1\le a\le i$, define $S_a$ to be the (undirected) subset $(k_a,\infty)$.
For $i<a<j$, define $S_a$ by $\overset{\circ}{S}_a=\R$ and $S_a(k_a)=S(k_a)$ and remaining elements of $\R$ undirected.
Finally, for $j\le a\le r$, define $S_a$ to be the (undirected) subset $(-\infty,k_a)$. Then clearly $S= S_1\cap S_2\cap\cdots\cap S_r$. For $i<a<j$, the choice of $S_a$ is unique. In the cases $1\le a\le i$, the only other choice of $S_a$ would be $\overset{\circ}{S}_a=\R$ with $S_a(k_i)\in\{1,\rightarrow,\leftarrow\}$. Similarly on the right side. So choosing the directed subsets whose underlying set are the half-open intervals instead of $\R$, we get uniqueness.
\end{proof}

Recall the simple modules $M_{k,\infty}^{z-k}$ and $M_{-\infty,k}^{z-k}$ from the previous section. We denote them here by $M_{(k,\infty)}^{z-k}$ and $M_{(-\infty,k)}^{z-k}$ to match the notation of Theorem \ref{thm:a-theorem-about-modules}. Also, for $k\in\Z+\frac{1}{2}$, let $\R_k^\pm$ denote the $(z-k)$-admissible directed subset whose underlying set is $\R$ and $k\overset{\rightarrow}{\in}\R_k^+$, while
$k\overset{\leftarrow}{\in}\R_k^-$.

\begin{Proposition}\label{prp:tensor-decomposition-of-indecomposables}
Let $t=\prod_{i=1}^r (z-k_i)^{m_i}$ be any monic polynomial with half-integer roots $k_i\in\Z+\frac{1}{2}$, $k_1<k_2<\cdots<k_r$.
Let $M$ be any indecomposable object in $A(t)\mathrm{-wmod}_\CO$, and let $S$ be the corresponding connected $t$-admissible directed subset of $\R$ such that $M\cong M_S^t$. Then
\begin{equation}\label{eq:tensor-decomposition}
M\cong M_1^{\otimes m_1}\otimes M_2^{\otimes m_2}\otimes\cdots\otimes M_r^{\otimes m_r}
\end{equation}
where $\otimes=\otimes_{\C[z]}$,
and $M_i$ are indecomposable modules over the Weyl algebra $A(z-k_i)$, given as follows. Let $0\le i<j\le r+1$ be such that $\overset{\circ}{S}=(k_i,k_j)$, where $k_0=-\infty$ and $k_{r+1}=\infty$. Then

\begin{equation}
M_a =
\begin{cases}
 M_{(-\infty,k_a)}^{z-k_a} & \text{$1\le a\le i$},\\
 M_{\R^+_{k_a}}^{z-k_a} & \text{if $i<a<j$ and $S(k_a)=\rightarrow$},\\
 M_{\R^-_{k_a}}^{z-k_a} & \text{if $i<a<j$ and $S(k_a)=\leftarrow$},\\
 M_{(k_a,\infty)}^{z-k_a} & \text{$j\le a\le r$}.
\end{cases}
\end{equation}
Moreover, the modules $M_a$ are uniquely determined for $i<a<j$. For $1\le a\le i$ and for $j\le a\le r$, the module $M_a$ could be replaced by $M_{\R^\pm_{k_a}}^{z-k_a}$ and the isomorphism \eqref{eq:tensor-decomposition} would still hold.
\end{Proposition}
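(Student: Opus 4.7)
The strategy is to combine Lemma~\ref{lem:a-lemma-about-directed-sets} with the tensor product rule \eqref{eq:tensor-product-rule} from Theorem~\ref{thm:a-theorem-about-modules}(c). The first step is to apply Lemma~\ref{lem:a-lemma-about-directed-sets} to the connected $t$-admissible subset $S$, obtaining a decomposition $S = S_1 \cap S_2 \cap \cdots \cap S_r$, where each $S_a$ is the uniquely specified $(z-k_a)$-admissible directed subset whose underlying set is a half-line (when $a \le i$ or $a \ge j$) or all of $\R$ with direction inherited from $S(k_a)$ (when $i < a < j$). Since each $S_a$ has connected underlying set, Theorem~\ref{thm:a-theorem-about-modules}(b) guarantees that $M^{z-k_a}_{S_a}$ is indecomposable as a module over the Weyl algebra $A(z-k_a)$.

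The second step is the observation that admissibility is insensitive to multiplicities: because $(z-k_a)^{m_a}$ has the same zero set as $z-k_a$, the directed subset $S_a$ is simultaneously $(z-k_a)$-admissible and $(z-k_a)^{m_a}$-admissible. Using $S_a \cap S_a = S_a$ (immediate from the definition of $\cap$ on directed subsets) and iterating \eqref{eq:tensor-product-rule} a total of $m_a - 1$ times yields $(M^{z-k_a}_{S_a})^{\otimes m_a} \cong M^{(z-k_a)^{m_a}}_{S_a}$. A further application of \eqref{eq:tensor-product-rule} across the $r$ factors then produces
\[
M_1^{\otimes m_1} \otimes \cdots \otimes M_r^{\otimes m_r}
\cong M^{t}_{S_1 \cap \cdots \cap S_r} = M^{t}_{S} \cong M,
\]
since by Lemma~\ref{lem:a-lemma-about-directed-sets} the intersection $S_1 \cap \cdots \cap S_r$ recovers $S$. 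Matching the four cases of the Lemma against the module notation---namely, an undirected half-line subset corresponds to a simple Weyl algebra module of the form $M^{z-k_a}_{(k_a,\infty)}$ or $M^{z-k_a}_{(-\infty,k_a)}$, while $\R$ with a directed element at $k_a$ corresponds to $M^{z-k_a}_{\R^\pm_{k_a}}$---then identifies each $M_a$ with the explicit form given in the statement.

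For the uniqueness and replaceability claims, when $i < a < j$ the point $k_a$ lies in $\overset{\circ}{S}$, so the direction of $k_a$ in $S$ coincides with $S_a(k_a)$ and is an invariant of $M$ (detectable by whether $X^+$ or $X^-$ acts by zero on the corresponding weight vector), forcing $M_a$ to be uniquely determined. When $a \le i$ or $a \ge j$, replacing $S_a$ by $\R^\pm_{k_a}$ changes only the direction at the point $k_a$, which is annihilated upon intersecting with the bounding factor $S_i$ or $S_j$, so the intersection $S_1 \cap \cdots \cap S_r$ and hence the tensor product are unchanged. I do not foresee any genuine obstacle here: the argument is a clean application of results already proved, and the only real bookkeeping lies in matching the directed-subset decomposition of Lemma~\ref{lem:a-lemma-about-directed-sets} to the module notation used in the statement.
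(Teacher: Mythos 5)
Your proof follows the same strategy as the paper's, which is also a brief application of Lemma~\ref{lem:a-lemma-about-directed-sets} followed by repeated use of the tensor product rule from Theorem~\ref{thm:a-theorem-about-modules}(c); you simply fill in details the paper leaves implicit, notably the handling of multiplicities $m_a > 1$ (via $S_a\cap S_a = S_a$ and the observation that admissibility depends only on the zero set, not on multiplicities) and the uniqueness/replaceability clauses, which the paper's proof does not discuss at all. These additions are correct and welcome.

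One caution: when you write that "matching the four cases of the Lemma against the module notation $\ldots$ then identifies each $M_a$ with the explicit form given in the statement," you should actually carry out that matching. Lemma~\ref{lem:a-lemma-about-directed-sets} necessarily chooses $S_a=(k_a,\infty)$ for $1\le a\le i$ and $S_a=(-\infty,k_a)$ for $j\le a\le r$, since $\bigcap_{a\le i}(k_a,\infty)\cap\bigcap_{a\ge j}(-\infty,k_a)=(k_i,k_j)=\overset{\circ}{S}$, while the opposite assignment yields $(-\infty,k_1)\cap(k_r,\infty)$, which is typically empty. The formula printed in the Proposition's statement has these two boundary cases swapped: it asserts $M_a=M^{z-k_a}_{(-\infty,k_a)}$ for $a\le i$ and $M_a=M^{z-k_a}_{(k_a,\infty)}$ for $a\ge j$. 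Already the smallest example exposes the problem: for $t=z-\tfrac{1}{2}$ and $M=M^{z-1/2}_{(1/2,\infty)}$ one has $i=1$, $j=2$, and the stated formula would give $M\cong M^{z-1/2}_{(-\infty,1/2)}$, which has the wrong support. So your derivation actually produces the corrected assignment, not the one printed in the statement. This is a typo in the Proposition rather than a flaw in your argument, but an uncritical "matching" would have papered over it; it deserves an explicit note.
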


\begin{proof}
The existence of the directed subset $S$ such that $M\cong M_S^t$ follows from Theorem \ref{thm:a-theorem-about-modules}(b). By Lemma \ref{lem:a-lemma-about-directed-sets}, $S$ can be decomposed as $S_1\cap S_2\cap \cdots\cap S_r$. By repeated use of Theorem \ref{thm:a-theorem-about-modules}(c), this yields the required decomposition.
\end{proof}

\begin{Theorem}
If $Z$ is a subset of $\Z+\frac{1}{2}$ and $\Ga$ is the monoid of all monic polynomials whose zero-set is contained in $Z$, then there is a $\C$-algebra isomorphism
\begin{equation}
\mathscr{A}^{\mathrm{split}}(\Gamma,\mathcal{O})
\cong \C[x^\pm_k,y^\pm_k\mid k\in Z]/(\mathrm{Rels})
\end{equation}
where the relations are given by:
\begin{subequations}\label{eq:relations-in-the-split-case}
\begin{align}
y_j^+x_k^+&=y_j^-x_k^+=x_j^+x_k^+\quad \text{if $j<k$,}\\
x_j^-y_k^+&=x_j^-y_k^-=x_j^-x_k^-\quad \text{if $j<k$,}\\
x_j^-x_k^+&=0\quad \text{if $j\le k$,}\\
y_k^+ y_k^- &= (x_k^+)^2 + (x_k^-)^2,
\end{align}
\end{subequations}
for all $j,k\in Z$.
The isomorphism maps 
the generators $x_k^\pm$ and $y_k^\pm$ 
to the two simples and the two non-simple indecomposables of 
$K_0^{\mathrm{split}}(A(z-k)\mathrm{-wmod}_\CO)$
respectively. Explicitly,
\begin{subequations}\label{eq:split-mapping-def}
\begin{align}
x_k^+ &\mapsto [M_{(k,\infty)}^{z-k}]\\
x_k^- &\mapsto [M_{(-\infty,k)}^{z-k}]\\
y_k^+ &\mapsto [M_{\R_k^+}^{z-k}]\\
y_k^- &\mapsto [M_{\R_k^-}^{z-k}]
\end{align}
\end{subequations}
where $\R_k^\pm$ is just the set $\R$ except the point $k$ is directed: $\R_k^+(k)=\rightarrow$ and $\R_k^-(k)=\leftarrow$.

\end{Theorem}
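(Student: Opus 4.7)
The plan is to define $\Phi$ by the given assignment on generators, check it respects the stated relations via the intersection rule of Theorem \ref{thm:a-theorem-about-modules}(c), obtain surjectivity from Proposition \ref{prp:tensor-decomposition-of-indecomposables}, and establish injectivity by a normal-form argument driven by Lemma \ref{lem:a-lemma-about-directed-sets}.

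I would first define the $\C$-algebra map $\Phi : \C[x_k^\pm, y_k^\pm \mid k\in Z] \to \mathscr{A}^{\mathrm{split}}(\Gamma,\mathcal{O})$ by \eqref{eq:split-mapping-def}; this is well defined since the target is commutative by Theorem \ref{thm:coprod}(c). Verifying each relation in \eqref{eq:relations-in-the-split-case} is then a direct pointwise calculation using Theorem \ref{thm:a-theorem-about-modules}(c) to translate products of generators into intersections of the corresponding directed subsets. For the first two families, the directed label at $j$ is annihilated by the zero value of $(k,\infty)$ or $(-\infty,k)$ at $j$ whenever $j\leq k$ (using $0\cdot x=0$), so all three expressions realize the same undirected intersection. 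For the third, the underlying set intersection is empty when $j\leq k$. For the last relation, the identity $\rightarrow\cdot\leftarrow=0$ removes $k$ from $\R_k^+\cap\R_k^-$, producing the strongly disjoint union $(-\infty,k)\cup(k,\infty)$, whose associated semi-indecomposable splits via Theorem \ref{thm:a-theorem-about-modules}(a)(b) into the sum of two simple indecomposables that are precisely the images of $(x_k^-)^2$ and $(x_k^+)^2$. Surjectivity of $\Phi$ is then immediate from Proposition \ref{prp:tensor-decomposition-of-indecomposables}, which expresses each indecomposable $[M_S^t]$ as a tensor product of Weyl-algebra indecomposables, each of which is one of the four generators $x_{k_a}^\pm, y_{k_a}^\pm$.

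The main obstacle is injectivity. For each connected $t$-admissible directed subset $S$ with $t=\prod_a (z-k_a)^{m_a}$, $k_1<\cdots<k_r$, and $\overset{\circ}{S}=(k_i,k_j)$, Lemma \ref{lem:a-lemma-about-directed-sets} yields a unique decomposition of $S$ as an intersection of $(z-k_a)$-admissible directed sets, which prompts the normal form
$$ N_{S,t} \;:=\; \prod_{a\leq i}(x_{k_a}^+)^{m_a}\cdot\prod_{i<a<j}(y_{k_a}^{\epsilon_a})^{m_a}\cdot\prod_{a\geq j}(x_{k_a}^-)^{m_a}. $$
I would prove that \eqref{eq:relations-in-the-split-case} reduces every monomial of the polynomial ring to a $\C$-linear combination of such $N_{S,t}$: the first two families rewrite any $y_j^\pm$-factor standing outside the interval of the target module's support in terms of the appropriate $x$-factor; the relation $x_j^- x_k^+=0$ annihilates monomials where an $x^-$-factor precedes (in index) an $x^+$-factor; and the relation $y_k^+y_k^- = (x_k^+)^2+(x_k^-)^2$, combined with $x_k^+x_k^-=0$, linearizes any mixed $(y_k^+)^a(y_k^-)^b$-block as a combination of $(x_k^+)^{a+b}$ and $(x_k^-)^{a+b}$. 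Linear independence of the images $\Phi(N_{S,t})=[M_S^t]$ then follows from Theorem \ref{thm:a-theorem-about-modules}(a)(b), since distinct connected $t$-admissible $S$ produce non-isomorphic indecomposables in $A(t)\text{-wmod}_\CO$. The delicate step will be confluence of the reduction and uniqueness of the normal form; I plan to argue by induction on the total $y$-degree combined with a lexicographic statistic on the mixed-index $x^\pm$-patterns, with uniqueness ultimately guaranteed by the uniqueness clause of Lemma \ref{lem:a-lemma-about-directed-sets}.
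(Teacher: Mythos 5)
Your overall architecture matches the paper's proof: define $\Phi$ on the free polynomial algebra by \eqref{eq:split-mapping-def}, check the relations lie in the kernel via the intersection rule of Theorem \ref{thm:a-theorem-about-modules}(c), get surjectivity from Proposition \ref{prp:tensor-decomposition-of-indecomposables}, and prove injectivity by reduction to a normal form built from Lemma \ref{lem:a-lemma-about-directed-sets}. (The paper also argues this way, via a normal form and a passing appeal to the Diamond Lemma.)

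There is, however, a concrete gap in your reduction step. You claim that the relation $y_k^+y_k^-=(x_k^+)^2+(x_k^-)^2$ together with $x_k^+x_k^-=0$ linearizes any block $(y_k^+)^a(y_k^-)^b$ to a combination of $(x_k^+)^{a+b}$ and $(x_k^-)^{a+b}$; this is only true when $a=b$. For $a<b$, applying the relation $a$ times and killing cross terms with $x_k^+x_k^-=0$ leaves $(x_k^+)^{2a}(y_k^-)^{b-a}+(x_k^-)^{2a}(y_k^-)^{b-a}$, which still contains $y$-factors. To finish you need the same-index identities $x_k^\pm y_k^\pm=(x_k^\pm)^2$ and $x_k^\pm y_k^\mp=(x_k^\pm)^2$. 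These do hold in $\mathscr{A}^{\mathrm{split}}$ by the intersection rule — indeed your own verification of the first two relation families is phrased for $j\le k$ — but they are not consequences of the relations as literally written with strict inequality $j<k$, and you never flag the discrepancy. The gap is not cosmetic: without the $j=k$ identities, the degree-$(z-k)^2$ homogeneous piece of $\C[x_k^\pm,y_k^\pm]/(\mathrm{Rels})$ has dimension $8$ (ten degree-$2$ monomials minus two independent relations), while $\C\otimes_\Z K_0^{\mathrm{split}}(A((z-k)^2)\textrm{-wmod}_\CO)$ has dimension $4$, so your proposed normal forms cannot span. You should either record the extra same-index relations explicitly or replace $j<k$ by $j\le k$ in the first two families (which is what the intersection rule actually gives), and then carry out the reduction using them; the confluence/induction argument you sketch at the end is otherwise on the right track and parallels the paper's.
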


\begin{proof}
Let $B=\C[x_k^\pm,y_k^\pm\mid k\in Z]$ and define a $\C$-algebra map $\Phi:B\to \mathscr{A}^{\mathrm{split}}(\Ga,\CO)$ by \eqref{eq:split-mapping-def}. By Proposition \ref{prp:tensor-decomposition-of-indecomposables}, $\Phi$ is surjective and the relations \eqref{eq:relations-in-the-split-case} belong to the kernel of $\Phi$, inducing a surjection $\tilde\Phi:B/(\mathrm{Rels})\to \mathscr{A}^{\mathrm{split}}(\Ga,\CO)$. The injectivity of $\tilde\Phi$ follows from the normal form of words in $\bar B=B/(\mathrm{Rels})$. In more detail, let $t=(z-k_1)^{m_1}\cdots (z-k_r)^{m_r}$ with $k_i\in\Z+\frac{1}{2}$, $k_1<\ldots<k_r$ and $m_i\in\Z_{>0}$.
The set of elements of $\bar B_t$ of the form
\[ X^+_i Y^\ep X^-_j \]
where $0\le i<j\le r+1$ and $\ep=(\ep_{i+1},\ldots,\ep_{j-1}) \in\{+,-\}^{j-i-1}$, and
\[X^+ = (x_{k_1}^+)^{m_1}\cdots (x_{k_i}^+)^{m_i} \]
\[Y = (y_{k_{i+1}}^{\ep_{i+1}})^{m_{i+1}}\cdots (y_{k_{j-1}}^{\ep_{j-1}})^{m_{j-1}}\]
\[X^- = (x_{k_j}^-)^{m_j}\cdots (x_{k_r}^-)^{m_r}\]
is a basis for $\bar B_t$ as a vector space over $\C$. Indeed, that these span can be checked by induction on $m_1+\cdots+m_r$, and their linear independence follow from the Diamond Lemma.
These correspond precisely to the indecomposable objects in $A(t)\mathrm{-wmod}_\CO$ which in turn form a $\C$-basis for $\C\otimes_\Z K_0^\mathrm{split}(A(t)\mathrm{-wmod}_\CO)=\mathscr{A}^\mathrm{split}(\Ga,\CO)_t$.
\end{proof}

We can also describe the algebra map from the split to the non-split algebra.
\begin{Theorem} If $Z$ is a subset of $\Z+\frac{1}{2}$ and $\Ga$ is the monoid of all monic polynomials whose zero-set is contained in $Z$, then the
canonical homomorphism \eqref{eq:canon} from the split Grothendieck group to the Grothendieck group yields a surjective $\C$-algebra homomorphism
\begin{equation}
\mathscr{A}^{\mathrm{split}}(\Gamma,\mathcal{O})
\to \mathscr{A}(\Gamma,\mathcal{O})
\end{equation}
which in terms of the algebra generators is given by
\begin{align}
x_k^+ &\mapsto x_k^+\\
x_k^- &\mapsto x_k^-\\
y_k^\pm &\mapsto x^+_k+x^-_k
\end{align}
for all $k\in Z$.
\end{Theorem}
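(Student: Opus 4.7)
The plan is to leverage the fact that the canonical map \eqref{eq:canon} sending $[M]\mapsto [M]$ was already shown to be a surjective $\C$-algebra homomorphism in the cited remark; compatibility with the product is automatic since $\otimes_R$ is the same functor on both sides and the map is the identity on representatives. For the generators $x_k^\pm$ of the split algebra, the corresponding modules $M_{(k,\infty)}^{z-k}$ and $M_{(-\infty,k)}^{z-k}$ are simple, so their classes in $\mathscr{A}(\Ga,\CO)$ are tautologically the generators denoted $x_k^\pm$ in the non-split presentation. Surjectivity of the induced map is then immediate from the earlier presentation of $\mathscr{A}(\Ga,\CO)$, in which the algebra is generated by the $x_k^\pm$.

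The only computation remaining is to express the images of $y_k^\pm$ in the non-split Grothendieck group. I would exhibit a (non-split) short exact sequence
\begin{equation*}
0\to M_{(k,\infty)}^{z-k}\to M_{\R_k^+}^{z-k}\to M_{(-\infty,k)}^{z-k}\to 0
\end{equation*}
in $A(z-k)\operatorname{-wmod}_\CO$ by the following reasoning. Since $\R_k^+(k)=\rightarrow$, part (iii) of Theorem \ref{thm:a-theorem-about-modules}(a) forces any nonzero weight vector $v$ of weight $k+\tfrac{1}{2}$ in $M_{\R_k^+}^{z-k}$ to satisfy $X^-v=0$. The submodule $N\subseteq M_{\R_k^+}^{z-k}$ generated by $v$ therefore has support contained in $\{n\in\Z\mid n>k\}$, and by the classification in Theorem \ref{thm:a-theorem-about-modules}(a)(b) the only indecomposable with this support is the simple $M_{(k,\infty)}^{z-k}$. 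By support-counting the quotient must then be $M_{(-\infty,k)}^{z-k}$. This gives $y_k^+\mapsto x_k^++x_k^-$ in $\mathscr{A}(\Ga,\CO)$. The statement for $y_k^-$ follows symmetrically using part (ii) of Theorem \ref{thm:a-theorem-about-modules}(a) to build
\begin{equation*}
0\to M_{(-\infty,k)}^{z-k}\to M_{\R_k^-}^{z-k}\to M_{(k,\infty)}^{z-k}\to 0,
\end{equation*}
yielding the same image $x_k^++x_k^-$.

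The main (and only) obstacle is the identification of the submodule $N$ and the corresponding quotient as the expected simples; this is essentially a matter of reading off the classification in Theorem \ref{thm:a-theorem-about-modules}, and no new ideas are needed. Everything else in the statement is either a direct consequence of \eqref{eq:canon} already being an algebra map or of the existing presentation of the non-split algebra.
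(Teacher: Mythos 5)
Your proposal is correct and takes essentially the same approach as the paper: both proofs reduce to exhibiting the two length-two composition series for $M_{\R_k^\pm}^{z-k}$ with subquotients $M_{(k,\infty)}^{z-k}$ and $M_{(-\infty,k)}^{z-k}$, which gives $y_k^\pm\mapsto x_k^++x_k^-$ in $K_0$. The only difference is that the paper simply asserts these short exact sequences while you derive them from conditions (ii)--(iii) of Theorem \ref{thm:a-theorem-about-modules}(a), and your version correctly writes $M_{\R_k^-}^{z-k}$ in the second sequence (the paper has a typo there, repeating $M_{\R_k^+}^{z-k}$).
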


\begin{proof}
$x_k^\pm$ correspond to the simple modules $M_{(-\infty,k)}^{(z-k)}$, $M_{(k,\infty)}^{(z-k)}$ respectively, while $y_k^\pm$ correspond to the indecomposable modules $M_{\R^\pm_k}^{(z-k)}$. The latter have composition series of length two:
\begin{equation}
0\to M_{(k,\infty)}^{(z-k)} \to M_{\R^+_k}^{(z-k)} \to
M_{(-\infty,k)}^{(z-k)}\to 0
\end{equation}
\begin{equation}
0\to M_{(-\infty,k)}^{(z-k)} \to M_{\R^+_k}^{(z-k)} \to
M_{(k,\infty)}^{(z-k)}\to 0
\end{equation}
This proves the claim.
\end{proof}


\section{Rank two setting (Cylinder)}\label{sec:cylinder}

Now we describe the situation of Section \ref{section:towers} in the special case of $n=2$, $R=\C[z]$, $\sigma_i^{1/2}(z)=z-\frac{\alpha_i}{2}$, $\alpha_i\in\C$, $i=1,2$. In this case, the solutions to the consistency equation \eqref{eq:cons1} were classified in \cite{HarRos16}. In particular, we have nontrivial solutions $t=(t_1,t_2)\in\Omega$ if and only if $\frac{\alpha_1}{\alpha_2}$ is a negative rational number. For simplicity, we will then, throughout this section, fix a pair or relatively prime positive integers $m,n$ and assume that $\alpha_1=-n$, $\alpha_2=m$. 

\begin{Remark}We have $\MaxSpec(R)=\{(z-\la)~|~\la\in\C\}$ with $\Z^2$-action given by $\sigma_1(z-\la)=(z-\la+n)$, $\sigma_2(z-\la)=(z-\la-m)$. Clearly $\Z^2\cdot (z-\la)\subset \{(z-\la+\Z)\}$, but since $m,n$ are relatively prime, we actually have $\Z^2\cdot (z-\la)= \{(z-\la+\Z)\}$. Hence the orbits of this action can again be parametrized by $\C/\Z$. If $\la+\Z\in\C/\Z$, the corresponding orbit is $\mathcal{O}_{\la+\Z}=\{(z-\la+\Z)\}$. Notice that, unlike the case of Section \ref{sec:line}, this $\Z^2$ orbit is \emph{not} torsion free, hence some of the results from Section \ref{section:towers} do not apply here. It is however still true that all the orbits are isomorphic, hence we will specifically focus only on $\CO_\Z=\{(z-\la)~|~\la\in\Z\}$.
\end{Remark}

We now describe $\Omega$ using the conventions of \cite{H2018}.


\begin{Definition}\label{def:cylinder}We consider the quotient group $C=\mathbb{R}^2/G$, where $G$ is the additive subgroup of $\R^2$ generated by $(m,n)\in\mathbb{R}^2$. We call $C$ the cylinder because, as a topological space, we have a homeomorphism $C\simeq S^1\times\mathbb{R}$. We define certain discrete subsets of $C$. 
\begin{itemize}
\item The \emph{face lattice} of the cylinder is $L=\Z^2/G\subset C$. 
\item The two \emph{edge lattices} are
$$ E_i=\left(\tfrac{1}{2}\bold{e}_i+\Z^2\right)/G\subset C, \qquad i=1,2$$
where $\bold{e}_1=(1,0)$, $\bold{e}_2=(0,1)$.
\item The \emph{vertex lattice} is $V=\left(\tfrac{1}{2}+\Z\right)^2/G\subset C$.
\end{itemize}
\end{Definition}

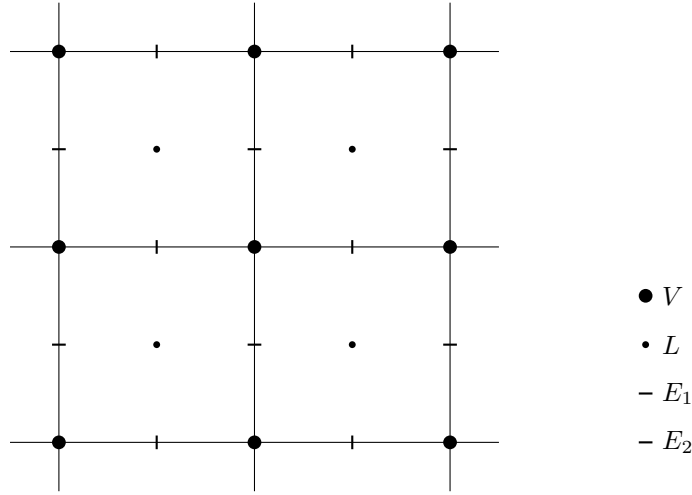
\begin{figure}
\centering
\begin{tikzpicture}[scale=1.3]
\foreach \y in {0,2,4} {
\draw (-.5 cm,\y cm) -- (4.5 cm,\y cm);}
\foreach \x in {0,2,4} {
\draw (\x cm,-.5 cm) -- (\x cm,4.5 cm);}
\foreach \y in {1,3} {
 \foreach \x in {1,3} {
  \fill (\x,\y) circle (1pt);}}
\foreach \y in {0,2,4} {
 \foreach \x in {0,2,4} {
  \fill (\x,\y) circle (2pt);}}
\foreach \x in {0,2,4} {
 \foreach \y in {1,3} {
  \draw[thick] (\x-.07,\y) -- (\x+.07,\y); }}
\foreach \x in {1,3} {
 \foreach \y in {0,2,4} {
\draw[thick] (\x,\y-.07) -- (\x,\y+.07);}}
\fill (6,1.5) circle (2pt);
\draw (6,1.5) node[right] {$\;V$};
\fill (6,1) circle (1pt);
\draw (6,1) node[right] {$\;L$};
\draw[thick] (6-.07,.5) -- (6+.07,.5);
\draw (6,.5) node[right] {$\;E_1$};
\draw[thick] (6-.07,0) -- (6+.07,0);
\draw (6,0) node[right] {$\;E_2$};
\end{tikzpicture}
\caption{Square lattice grids.}
\label{fig:lattice}
\end{figure}

\begin{Definition}\label{def:config}
A \emph{configuration} on $C$ is a function $\omega=(\omega_1,\omega_2):E_1\times E_2\to\N$ such that two conditions are satisfied:
\begin{enumerate}
\item $|\omega^{-1}(\N\setminus\{0\})|<\infty$ (finiteness)
\item For all $v\in V$ we have the \emph{ice rule}
\begin{equation}\label{eq:icerule} \omega\left(v-\tfrac{1}{2}\bold{e}_1\right)+\omega\left(v-\tfrac{1}{2}\bold{e}_2\right)=\omega\left(v+\tfrac{1}{2}\bold{e}_1\right)+\omega\left(v+\tfrac{1}{2}\bold{e}_2\right).\end{equation}
\end{enumerate}
We denote the set of all configurations on $C$ by $\mathscr{C}$.
\end{Definition}
\begin{Remark}In Definition \ref{def:cylinder}, we have identified the edges of the edge lattices by their midpoints, it will be useful however to also consider them as line segments of length one. Specifically, if $s_1=\left(a+\frac{1}{2},b\right)\in E_1$, the corresponding line segment is $\{(x,y)\in\mathbb{R}^2/G~|~x=a+\frac{1}{2},~b-\frac{1}{2}\leq y\leq b+\frac{1}{2}\}$. Analogously, if $s_2=(a,b+\frac{1}{2})\in E_2$, we have the corresponding segment $\{(x,y)\in\mathbb{R}^2/G~|~a-\frac{1}{2}\leq x\leq a+\frac{1}{2},~= b+\frac{1}{2}\}$. In particular, if $\omega\in \mathscr{C}$ is a configuration, we consider $\Supp(\omega)=\omega^{-1}(\N\setminus\{0\})$ as a union of line segments. Notice that we are using $\Supp$ both to denote the support of a weight module and the support of a configuration, the context should prevent any possible confusion.
\end{Remark}
\begin{Remark}
A function $\omega:E_1\times E_2\to\N$ is a configuration on $C$ if and only if it can be written as a sum of indicator functions of $(m,n)$-paths in $C$ (lattice paths of length $m+n$ with $m$ steps east and $n$ steps north, due to the identification of the cylinder these are actually loops topologically). We will identify the paths with their indicator functions throughout this section.
\end{Remark}
\begin{Example}Let $m=3$, $n=2$. We draw the cylinder as a fundamental domain of an infinite vertical strip in the plane, we identify points $(3,y)$ on the right boundary with points $(0,y-2)$ on the left boundary. An example of a configuration on the cylinder is in figure \ref{fig:configuration} on the left, where we write the value assigned to each edge (zero if nothing is written). On the right we show one possible way of decomposing the configuration as a sum of four $(3,2)$-paths. This is in fact the unique way to write the configuration in such a way that the resulting paths are a chain in the partial order defined on the set of $(m,n)$-paths on the cylinder. The partial order is defined, for two paths $\pi$ and $\nu$, by $\pi\geq \nu$ if each segment of $\pi$ either overlaps with $\nu$ or it is to the north of $\nu$.
\end{Example}
\begin{figure}
\centering
\begin{tikzpicture}[scale=1]
\foreach \y in {0,1,2,3,4,5} {
\draw (0,\y) -- (3,\y);}
\foreach \x in {0,1,2} {
\draw (\x,-1) -- (\x,6);}
\draw[dashed] (3,-1) -- (3,6);
\draw[red] (0,1.5) node {$2$};
\draw[red] (0,2.5) node {$1$};
\draw[red] (0,3.5) node {$1$};
\draw[red] (0.5,2) node {$3$};
\draw[red] (0.5,4) node {$1$};
\draw[red] (1,2.5) node {$2$};
\draw[red] (1,3.5) node {$1$};
\draw[red] (1.5,2) node {$1$};
\draw[red] (1.5,3) node {$1$};
\draw[red] (1.5,4) node {$2$};
\draw[red] (2,2.5) node {$1$};
\draw[red] (2.5,3) node {$2$};
\draw[red] (2.5,4) node {$2$};
\end{tikzpicture}\hspace{4cm}
\begin{tikzpicture}[scale=1]
\foreach \y in {0,1,2,3,4,5} {
\draw (0,\y) -- (3,\y);}
\foreach \x in {0,1,2} {
\draw (\x,-1) -- (\x,6);}
\draw[dashed] (3,-1) -- (3,6);
\draw[red,thick] (0,2) -- (0,4.02);
\draw[red,thick] (0,4.02) -- (3,4.02);
\draw[blue,thick] (0,2.03) -- (0.98,2.03) -- (0.98,3.98) -- (3,3.98);
\draw[green,thick] (-0.02,1) -- (-0.02,2) -- (1.02,2)--(1.02,3.02) -- (3,3.02);
\draw[orange,thick] (0.02,1) -- (0.02,1.97) -- (2,1.97) -- (2,2.98) -- (3,2.98);
\end{tikzpicture}
\caption{Configuration and corresponding paths.}
\label{fig:configuration}
\end{figure}
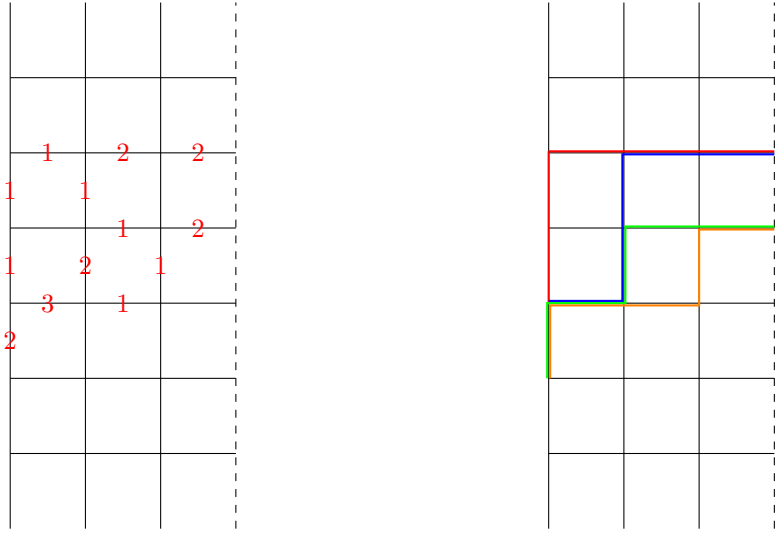
\begin{Remark}As described in \cite[Thm 1.15]{H2018}, for each $t\in\Omega$, we have a set of pairs $\{(\omega^{(j)},\la_j)\in\mathscr{C}\times\C~|~j=1,\ldots,k\}$ such that
$$t=\prod_{j=1}^k t^{\omega^{(j)}} $$
where 
$$t^{\omega^{(j)}}=(t^{\omega^{(j)}}_1,t^{\omega^{(j)}}_2)\in\Omega,\qquad t^{\omega^{(j)}}_i=\prod_{(s_1,s_2)\in E_i}(z+s_1n-s_2m-\la_j)^{\omega^{(j)}_i(s_1,s_2)}, \quad i=1,2.$$
Notice that for all $j=1,\ldots,k$, $t^{\omega^{(j)}}_i\in\C[z]$ because of the finiteness condition \ref{def:config}(1) and $t^{\omega^{(j)}}$ satisfies \eqref{eq:cons1} because of the ice rule \eqref{eq:icerule}.
Additionally, the pairs $(\omega^{(j)},\la_j)$ corresponding to a given $t$ are uniquely determined up to the $\Z^2$-action. Notice that each $t^{\omega^{(j)}}$ is an \emph{orbital solution}, in the terminology of \cite{HarRos20}.
\end{Remark}
For any $t\in\Omega$, we have a rank two TGWA over $\C[z]$, denoted $A(t)$, with generators $X_1^{\pm}$, $X_2^{\pm}$ and relations given as in Definition \ref{def:tgwa}.
Even though we do not have torsion-free orbits in this setting, it is still true that, by the localization results of Theorem \ref{thm:wmodO}(b), when examining the category $A(t)\text{-wmod}_\CO$, we can restrict ourselves to parameters $t=(t_1,t_2)$ such that all the irreducible factors of $\si_i^{1/2}(t_i)$ are in $\bigcup_{\Fm\in\CO}\Fm$. This is basically the idea of \cite[Theorem A]{H2018}. We fix $\CO=\CO_\Z$ and accordingly we then define
$$\Omega'=\left\{t^\omega=(t^\omega_1,t^\omega_2)\in\Omega~|~t^\omega_i=\prod_{(s_1,s_2)\in E_i}(z+s_1n-s_2m)^{\omega_i(s_1,s_2)},~i=1,2,~\text{ for some }\omega\in\mathscr{C}\right \}.$$

\subsection{The algebra $\scA(\Omega',\CO)$}
We recall here the classification of simple weight modules for $A(t^\omega)$, which was one of the main results of \cite{H2018}, adapting the notation to our current setting.
\begin{Theorem}[{\cite[Thm. B]{H2018}}]\label{thm:simples-cyl}Let $t^\omega\in\Omega'$, then the simple objects of $A(t^\omega)\text{-wmod}_\CO$ are $M^{\omega}_{(D,\xi)}$ where $D$ is a connected component of $C\setminus\Supp(\omega)$ and $\xi\in\C$ is such that $\xi=0$ if and only if $D$ is contractible. Moreover, $\Supp(M^\omega_{(D,\xi)})=L\cap D$, all the weight spaces are one dimensional over $\C$, and there is a centralizing element $c(\omega)\in A(t^\omega)\otimes_{\C[z]} \C(z)$ such that $c(\omega)$ acts by $\xi$ on $M^\omega_{(D,\xi)}$ when $D$ is not contractible.
\end{Theorem}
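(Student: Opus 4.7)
The plan is to follow the general strategy for classifying simple weight modules over consistent TGWAs: reduce to the localized setting on the orbit, interpret the combinatorics of the $\Z^2$-action on $\CO$ geometrically on the cylinder $C$, and then analyze the freedom in simple modules coming from centralizing elements attached to nontrivial loops.

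First, by Theorem \ref{thm:wmodO}(b) I may replace $A(t^\omega)$ with the localized TGWA $A_\CO(\bar t^\omega)$. Under the bijection $\CO \cong L = \Z^2/G$ sending $(z-\la)\mapsto \overline{(\la \bmod n, \la \bmod m)}$, the $\Z^2$-action becomes the standard translation action on $L$, and the zeros of $\si_i^{1/2}(t_i^\omega)$ on $\CO$ correspond exactly to the edges $s\in E_i$ with $\omega_i(s)>0$. Thus the geometry of $\omega$ on $C$ faithfully encodes the local data of the consistent TGWA.

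Next, I would invoke the general support theorem for simple weight modules over a TGWA (\cite{H2006}, cf.\ \cite[Sec.~3.5]{H2018}): each simple object in $A(t^\omega)\text{-wmod}_\CO$ is supported on a single equivalence class of the connectedness relation $\sim$ generated by $\si_i^{-1/2}(\Fm)\sim\si_i^{1/2}(\Fm)$ whenever $t_i^\omega\notin\Fm$; moreover every weight space is one-dimensional over $\C$ because $R$ is maximal commutative in $A_\CO(\bar t^\omega)$ (which follows from \cite[Thm.~5.1]{HarOin} applied to the faithful $\Z^2$-action on $R_\CO$). Under the identification $\CO\leftrightarrow L$, two faces are $\sim$-equivalent iff they can be joined by a lattice path that never crosses an edge carrying positive $\omega$-weight; hence the $\sim$-equivalence classes are precisely the sets $L\cap D$ for $D$ a connected component of $C\setminus\Supp(\omega)$. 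This establishes both the support claim $\Supp(M^\omega_{(D,\xi)})=L\cap D$ and the one-dimensionality of weight spaces.

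It remains to enumerate, for each $D$, the simple modules with that support. Fixing $\Fm_0\in L\cap D$, I would form the universal weight module generated by a one-dimensional $R/\Fm_0$ weight space and identify its simple quotients. The action of $X_i^\pm$ on adjacent weight spaces is fixed (up to scalar and up to the constraints $X_i^+X_i^-=\si_i^{1/2}(t_i^\omega)$), but if $D$ is not contractible then it wraps nontrivially around $C$, so there is a closed lattice path of total displacement $(am,an)\in G$ in $L\cap D$. The corresponding monomial in $X_1^\pm,X_2^\pm$ returns to the starting weight space and (after dividing by its scalar from $R$) defines an element $c(\omega)$ of $A_\CO(\bar t^\omega)$ that is central on $M$ by Schur's lemma on each weight line; its eigenvalue $\xi\in\C$ parametrizes the simples, and one checks $\xi\ne 0$ using that $\si_i^{1/2}(t_i^\omega)$ is invertible at each $\Fm\in L\cap D$. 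Conversely, when $D$ is contractible, any closed lattice path in $D$ is trivial in $H_1(C)$ and can be deformed to a constant by successive squares; the TGWA relations \eqref{eq:TGWA-Rels} together with the consistency conditions \eqref{eq:cons1} then force the corresponding loop monomial to equal a product of elements of $R$, so no nontrivial central eigenvalue survives, forcing $\xi=0$.

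The principal obstacle is the construction and well-definedness of $c(\omega)$: one must show that the loop monomial, taken modulo the action of $R$, is independent (up to scalar) of the specific lattice loop chosen in $L\cap D$, of the basepoint, and of the order of the $X_i^\pm$ factors. This is essentially a homological computation on the cylinder: any two loops in $D$ representing the same class in $H_1(C)\cong\Z$ differ by a boundary, and each elementary square in the boundary gives rise to a relation that is a consequence of \eqref{eq:cons1} and \eqref{eq:TGWA-Rels}. The finiteness of $\Supp(\omega)$ and the ice rule \eqref{eq:icerule} enter precisely to guarantee that all such boundary relations balance out, so that $c(\omega)$ defines a well-defined element of $A(t^\omega)\otimes_{\C[z]}\C(z)$.
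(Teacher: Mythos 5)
The paper does not give its own proof of this statement; it is cited directly from \cite[Thm.~B]{H2018}, so the only in-paper data to compare against is Proposition~\ref{prop:central} on the structure of $c(\omega)$. Your outline --- localize to the orbit (Theorem~\ref{thm:wmodO}(b)), identify $\CO$ with the face lattice $L$, classify supports by connectivity via \cite{H2006} (cf.\ \cite[Sec.~3.5]{H2018}), get one-dimensional weight spaces from maximal commutativity via \cite{HarOin}, and distinguish noncontractible components by the eigenvalue of a centralizing element built from a nontrivial loop --- is broadly faithful to Hartwig's approach. (One slip: the identification $(z-\la)\mapsto\overline{(\la\bmod n,\,\la\bmod m)}$ is not a bijection onto $L=\Z^2/G$; its image lies in the finite set $\Z/n\times\Z/m$, whereas $\Z^2/G\cong\Z$. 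The correct map sends $(z-\la-k)$ to the class of any $(g_1,g_2)\in\Z^2$ with $g_1n-g_2m=k$. This is easily repaired and does not affect the structure of the argument.)

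The substantive gap is your explanation of why $\xi=0$ exactly for contractible $D$. You argue that for contractible $D$ the loop monomial ``can be deformed to a constant by successive squares'' and hence ``equals a product of elements of $R$, so no nontrivial central eigenvalue survives, forcing $\xi=0$.'' That inference does not hold: if the loop monomial collapsed to an element of $R$ on a weight line, its eigenvalue would be whatever scalar that element of $R$ evaluates to at $\Fm$ --- which is generically nonzero, not zero. The actual mechanism is different. The element $c(\omega)=X(\underline{i})/f^{\underline{i}}_\omega(z)$ involves a monomial whose net displacement is $(m,n)$, i.e.\ a loop wrapping once around the cylinder. Any lattice path realizing such a displacement from a basepoint inside a bounded (contractible) component $D$ must exit $D$ and cross an edge of $\Supp(\omega)$, at which point the corresponding $X_i^\pm$ kills the weight vector (equivalently, $f^{\underline{i}}_\omega(\la)$ vanishes). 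That is exactly why Proposition~\ref{prop:central} only asserts $f^{\underline{i}}_\omega(\la)\neq 0$ when $D$ is noncontractible, and why the theorem only claims $c(\omega)$ acts by $\xi$ ``when $D$ is not contractible.'' For contractible $D$ the label $\xi=0$ is a bookkeeping convention encoding that there is a \emph{unique} simple with support $L\cap D$; that uniqueness needs its own argument (one-dimensional weight spaces together with the fact that on a simply-connected region every closed lattice path inside $D$ composes to the same $R$-scalar, so the module is rigidly determined from a single weight line), and is not a consequence of a computed zero eigenvalue of $c(\omega)$.
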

\begin{Example}
For the configuration $\omega$ of Figure \ref{fig:configuration}, there are five connected components of $C\setminus\Supp(\omega)$, two unbounded ones, which are not contractible, and three bounded ones that are contractible.
\end{Example}
\begin{Proposition}\label{prop:central}
For any sequence $\underline{i}=(i_1,i_2,\ldots,i_{m+n})$ of $m$ $1$'s and $n$ $2$'s, there exists a nonzero polynomial $f^{\underline{i}}_{\omega}(z)\in\C[z]$
such that the centralizing element $c(\omega)\in A(t^\omega)\otimes_{\C[z]} \C(z)$ equals
$$ c(\omega)=X(\underline{i}) \cdot\frac{1}{f^{\underline{i}}_{\omega}(z)}$$
where $X(\underline{i})=X_{i_{m+n}}^+\cdots X_{i_2}^+X_{i_1}^+$. Moreover, if $D$ is not contractible then, for all $(z-\lambda)\in\Supp(M^\omega_{(D,\xi)})$ and corresponding sequence $\underline{i}$ such that the face path $\lambda, \lambda+\boldsymbol{e}_{i_1}, \ldots, \lambda+\boldsymbol{e}_{i_1}+\cdots+\boldsymbol{e}_{i_{m+n}} $ does not cross any edge from $\omega$, we have $f^{\underline{i}}_{\omega}(\lambda)\neq 0$ and for all $m\in (M^\omega_{(D,\xi)})_{(z-\lambda)}$ the action is defined by 
$$ c(\omega)\cdot m=X(\underline{i})\cdot \left(\frac{1}{f^{\underline{i}}_{\omega}(\lambda)}m\right).$$
Finally, for any sequence $\underline{i}$, the polynomials $f^{\underline{i}}_{\omega}(z)$ satisfy
$$ f^{\underline{i}}_{\omega+\omega'}(z)=f^{\underline{i}}_{\omega}(z)\cdot f^{\underline{i}}_{\omega'}(z).$$
\end{Proposition}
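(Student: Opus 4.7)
The approach I would take is to exploit the $\Z^2$-grading on the localized algebra $A(t^\omega) \otimes_{\C[z]} \C(z)$ in which $\deg X_i^\pm = \pm\boldsymbol{e}_i$. Since $c(\omega)$ commutes with $\C[z]$ and is nontrivial, and since in our rank-$2$ setting the kernel of the map $(d_1,d_2) \mapsto \sigma_1^{d_1}\sigma_2^{d_2}$ is generated by $(m,n)$, the element $c(\omega)$ must be homogeneous of degree $(m,n)$. By iteratively applying the TGWA relations, any two products $X(\underline{i})$ and $X(\underline{j})$ of degree $(m,n)$ differ by a unit of $\C(z)$, so the degree-$(m,n)$ component of the localization is a free rank-one $\C(z)$-module generated by any such $X(\underline{i})$. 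Consequently there is a unique $g^{\underline{i}}_\omega(z)\in\C(z)$ with $c(\omega) = X(\underline{i})\cdot g^{\underline{i}}_\omega(z)$; the content of the proposition is then to identify $g^{\underline{i}}_\omega$ as $1/f^{\underline{i}}_\omega$ for an explicit polynomial.

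To perform this identification, I would evaluate $c(\omega) = X(\underline{i})\,g^{\underline{i}}_\omega(z)$ on weight vectors of non-contractible simple modules. Fix a nonzero $v_\lambda \in (M^\omega_{(D,\xi)})_{(z-\lambda)}$ with $\xi\in\C^\times$ and with the face path indexed by $\underline{i}$ starting at $\lambda$ not crossing any edge of $\omega$. Since weight spaces are one-dimensional by Theorem \ref{thm:simples-cyl} and the face path closes up (because $m\alpha_1 + n\alpha_2 = -mn+nm = 0$), repeatedly applying the generators $X^+_{i_k}$ — each acting by a scalar that is nonzero precisely under the non-crossing hypothesis, as $\sigma_{i_k}^{1/2}(t^\omega_{i_k})$ does not vanish at the current weight — yields $X(\underline{i})\cdot v_\lambda = F^{\underline{i}}_\omega(\lambda)\, v_\lambda$ for a nonzero scalar. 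Computing this scalar iteratively via the relations $X_i^+X_i^- = \sigma_i^{1/2}(t^\omega_i)$ and the commutations $X_j^+ r = \sigma_j(r) X_j^+$ (which move the scalars past the remaining $X^+$'s), one finds that $F^{\underline{i}}_\omega$ depends polynomially on $\lambda$. Comparing with $c(\omega)\cdot v_\lambda = \xi v_\lambda$, we conclude $g^{\underline{i}}_\omega(z) = \xi/F^{\underline{i}}_\omega(z)$; fixing the normalization of $c(\omega)$ (so that $\xi$ is a complex scalar) gives $g^{\underline{i}}_\omega(z) = 1/f^{\underline{i}}_\omega(z)$ with $f^{\underline{i}}_\omega(z) := F^{\underline{i}}_\omega(z)/\xi \in \C[z]$, and both the non-vanishing $f^{\underline{i}}_\omega(\lambda)\neq 0$ and the stated action formula follow directly.

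For the multiplicativity $f^{\underline{i}}_{\omega+\omega'} = f^{\underline{i}}_\omega \cdot f^{\underline{i}}_{\omega'}$, I would use the coproduct $\Delta_{t^\omega,t^{\omega'}}$ from Theorem \ref{thm:coprod}, which extends to the localization and sends $X(\underline{i})$ for $t^{\omega+\omega'}$ to $X(\underline{i})\otimes X(\underline{i})$. On a tensor product $v\otimes v'$ of weight vectors in $M^\omega_{(D,\xi)}\otimes_R M^{\omega'}_{(D',\xi')}$ at a common weight $\lambda$, Lemma \ref{lemma:weightmod} and the product-of-scalars computation give $X(\underline{i})\cdot(v\otimes v') = F^{\underline{i}}_\omega(\lambda) F^{\underline{i}}_{\omega'}(\lambda)\,(v\otimes v')$, so $F^{\underline{i}}$ is multiplicative in $\omega$; the fact that centralizing scalars also multiply ($c(\omega+\omega')$ restricts under $\Delta$ to act as $\xi\xi'$ on the tensor product) then transfers this multiplicativity to $f^{\underline{i}}$.

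The main technical obstacle will be the step establishing that $g^{\underline{i}}_\omega(z)$ has the form $1/f^{\underline{i}}_\omega(z)$ with $f^{\underline{i}}_\omega \in \C[z]$, i.e.\ that the numerator is a constant rather than a nontrivial polynomial. This rests on a careful iterative computation of $F^{\underline{i}}_\omega$ using the TGWA relations together with a consistent normalization of $c(\omega)$ (so that $\xi$ is a fixed complex scalar rather than varying with $\lambda$), which must be reconciled with the construction of $c(\omega)$ as given in \cite{H2018}; an alternative route would be to establish the claim first for a single orbital solution $t^\pi$ by a direct computation and then extend to general $\omega$ by the already-established multiplicativity.
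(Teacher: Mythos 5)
The paper's proof of this proposition is a one-line citation: it ``follows directly from \cite[Prop.~6.3]{H2018}, by using the properties of $\operatorname{ord}(\underline{i},\lambda)$.'' You instead attempt an independent derivation, which is a genuinely different route, but it has two gaps worth naming.

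First, a small one: the fact that $c(\omega)$ centralizes $R$ and that $\ker\bigl((d_1,d_2)\mapsto\sigma_1^{d_1}\sigma_2^{d_2}\bigr)=\Z(m,n)$ only forces $c(\omega)$ to lie in the sum of the $\Z^2$-graded components of degree $k(m,n)$, $k\in\Z$; it does not by itself make it homogeneous of degree exactly $(m,n)$. That normalization has to be imported from the construction of $c(\omega)$ in \cite{H2018}, so your ``must be'' should read ``is, by construction.''

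Second, and more seriously, your core step — that the scalar $F^{\underline{i}}_\omega(\lambda)$ ``depends polynomially on $\lambda$'' and that the numerator of $g^{\underline{i}}_\omega$ is a constant — is asserted rather than proved, and the way you phrase the computation is close to circular. The scalar by which $X(\underline{i})$ acts on $(M^\omega_{(D,\xi)})_{(z-\lambda)}$ is \emph{not} determined step-by-step by the relations $X_i^+X_i^-=\sigma_i^{1/2}(t_i^\omega)$ and $X_i^\pm r=\sigma_i^{\pm1}(r)X_i^\pm$: those fix only the products $X_i^+X_i^-$ and the commutation with $R$, not the individual structure constants, which are basis-dependent. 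The only basis-independent quantity is the closed-loop scalar, and its relation to $\lambda$ \emph{is} the content being proved. Equivalently, once you accept that $c(\omega)$ acts by $\xi$ on $M^\omega_{(D,\xi)}$, you already have $F^{\underline{i}}_\omega(\lambda)=\xi/g^{\underline{i}}_\omega(\lambda)$, so asserting $F$ is a polynomial in $\lambda$ is precisely asserting $1/g=f\in\C[z]$ — the claim you set out to prove. You acknowledge this as the ``main technical obstacle,'' but that obstacle is exactly what \cite[Prop.~6.3]{H2018} (via $\operatorname{ord}(\underline{i},\lambda)$) supplies; without it the argument does not close. Your suggested alternative — do the explicit computation for a single orbital configuration $t^\pi$ and then bootstrap via multiplicativity — is a sound way to fill the hole, and in fact the multiplicativity part of your argument, using $\Delta_{t^\omega,t^{\omega'}}$ and Lemma \ref{lemma:weightmod} together with Proposition \ref{prop:prod-nonct}, is correct and is an attractive, more conceptual replacement for the combinatorial identity on $\operatorname{ord}(\underline{i},\lambda)$ that the paper implicitly relies on.
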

\begin{proof}
This follows directly from \cite[Prop. 6.3]{H2018}, by using the properties of $\operatorname{ord}(\underline{i},\lambda)$.
\end{proof}
\begin{Remark}\label{rem:inters-comp}
If $D$ is a connected component of $C\setminus\Supp(\omega)$ and $D'$ is a connected component of $C\setminus\Supp(\omega')$, then $D\cap D'\subseteq C\setminus\Supp(\omega+\omega')$ is either a single noncontractible connected component (which is only possible if $D$ and $D'$ were both noncontractible), or a disjoint union of contractible components. We denote the set of connected components of $C\setminus\Supp(\omega)$ by $H_0(C\setminus\Supp(\omega))$ (zeroth homology group).
\end{Remark}
\begin{Proposition}\label{prop:prod-nonct}
Consider $M^\omega_{(D,\xi)}$, $M^{\omega'}_{(D',\xi')}$, with $D,D'$ not contractible such that $D\cap D'$ is also not contractible.
Then we have an isomorphism of $A(t^{\omega+\omega'})$-modules
$$M^\omega_{(D,\xi)}\otimes_{\C[z]} M^{\omega'}_{(D',\xi')}\simeq M^{\omega+\omega'}_{(D\cap D',\xi\cdot \xi')}.$$
\end{Proposition}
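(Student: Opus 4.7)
The plan is to identify $M^\omega_{(D,\xi)}\otimes_{\C[z]} M^{\omega'}_{(D',\xi')}$ with the candidate simple on the right side by checking three things: the correct support with one-dimensional weight spaces, the central action of $c(\omega+\omega')$ by $\xi\xi'$, and simplicity as an $A(t^{\omega+\omega'})$-module. Once these are established, the uniqueness clause of Theorem \ref{thm:simples-cyl} identifies the tensor product with $M^{\omega+\omega'}_{(D\cap D',\xi\xi')}$.

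First, Lemma \ref{lemma:weightmod} shows that the tensor product is a weight module with support $(L\cap D)\cap (L\cap D')=L\cap(D\cap D')$ and with each weight space a tensor product of two one-dimensional spaces, hence one-dimensional. By Remark \ref{rem:inters-comp}, the assumption that $D\cap D'$ is noncontractible forces it to be a single noncontractible connected component of $C\setminus\Supp(\omega+\omega')$, matching the support of the target simple. Next, to compute the action of $c(\omega+\omega')$, fix $\lambda\in L\cap(D\cap D')$ and, using that $D\cap D'$ is noncontractible, choose a sequence $\underline{i}$ whose face path starting at $\lambda$ crosses no edge in $\Supp(\omega+\omega')$; then it crosses no edge of $\omega$ or $\omega'$ either, so $f^{\underline{i}}_\omega(\lambda)\neq 0$ and $f^{\underline{i}}_{\omega'}(\lambda)\neq 0$, and by Proposition \ref{prop:central} the monomial $X(\underline{i})$ acts on weight vectors $m$ and $m'$ at $\lambda$ by $\xi f^{\underline{i}}_\omega(\lambda)$ and $\xi' f^{\underline{i}}_{\omega'}(\lambda)$ respectively. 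Applying the coproduct $\Delta_{t^\omega,t^{\omega'}}$, so that $X(\underline{i})\mapsto X(\underline{i})\otimes X(\underline{i})$ and $z$ acts on the weight vector $m\otimes m'$ by $\lambda$, together with the multiplicativity $f^{\underline{i}}_{\omega+\omega'}=f^{\underline{i}}_\omega\cdot f^{\underline{i}}_{\omega'}$, gives
\[
c(\omega+\omega')\cdot(m\otimes m')=\frac{\xi f^{\underline{i}}_\omega(\lambda)\,\xi' f^{\underline{i}}_{\omega'}(\lambda)}{f^{\underline{i}}_{\omega+\omega'}(\lambda)}(m\otimes m')=\xi\xi'(m\otimes m').
\]

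For simplicity, I would fix a nonzero vector $m\otimes m'$ at some $\lambda\in L\cap(D\cap D')$ and show that its cyclic $A(t^{\omega+\omega'})$-submodule meets every weight space. Connectedness of $D\cap D'$ provides, for any $\mu\in L\cap(D\cap D')$, a sequence of adjacent face moves inside $D\cap D'$ from $\lambda$ to $\mu$; each step crosses an edge outside $\Supp(\omega+\omega')$ and hence outside both $\Supp(\omega)$ and $\Supp(\omega')$. At each such step, the corresponding $X_i^\pm$ acts nontrivially on the relevant weight vector of each simple factor (since in each $M^\omega_{(D,\xi)}$ an operator $X_i^\pm$ is nonzero on a weight vector whenever the adjacent edge is not in $\Supp(\omega)$ and the target face lies in $D$), so by $\Delta(X_i^\pm)=X_i^\pm\otimes X_i^\pm$ the image in the tensor product is a nonzero element of the one-dimensional weight space at the next face. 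Iterating shows the cyclic submodule hits every weight space and hence equals the whole tensor product, which is therefore simple. Combining with the support and central-character computations above, Theorem \ref{thm:simples-cyl} identifies the tensor product with $M^{\omega+\omega'}_{(D\cap D',\xi\xi')}$.

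The main obstacle is the nonvanishing input used in the simplicity step: namely, that inside a simple $M^\omega_{(D,\xi)}$ the operator $X_i^\pm$ sends a weight vector at $\lambda$ to a nonzero vector at $\sigma_i^{\pm 1}(\lambda)$ exactly when the corresponding edge is not in $\Supp(\omega)$ and the target face is still in $L\cap D$. This is a standard feature of the explicit construction of simple weight modules for TGWAs from \cite{H2018} since by the defining relations $X_i^\mp X_i^\pm=\sigma_i^{\mp 1/2}(t_i^\omega)$ the vanishing of the action is controlled by evaluating $\sigma_i^{\mp 1/2}(t_i^\omega)$ on the weight, whose zero locus is precisely the set of edges of $\Supp(\omega)$ at the appropriate faces; I would cite this to avoid a detailed verification.
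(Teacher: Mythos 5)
Your proposal is correct and follows essentially the same route as the paper: both establish the support and one-dimensionality of weight spaces via Lemma \ref{lemma:weightmod}, and both compute that $c(\omega+\omega')$ acts by $\xi\xi'$ using Proposition \ref{prop:central} and the multiplicativity of $f^{\underline i}$, then appeal to the classification in Theorem \ref{thm:simples-cyl}. The one point of divergence is that you explicitly verify simplicity of the tensor product (via a cyclic-submodule/connectedness argument), whereas the paper's proof states ``the only thing left to check is the action of $c(\omega+\omega')$'' and leaves the step from support/weight-dimensions/central-character to the isomorphism implicit, relying on the structure theory of \cite{H2018}; your extra step makes the appeal to the uniqueness clause of Theorem \ref{thm:simples-cyl} more self-contained and does no harm.
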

\begin{proof}
First of all, it is clear by Lemma \ref{lemma:weightmod} that
$$ \Supp (M^\omega_{(D,\xi)}\otimes_{\C[z]} M^{\omega'}_{(D',\xi')})=\Supp (M^\omega_{(D,\xi)})\cap \Supp(M^{\omega'}_{(D',\xi')})=D\cap D'$$
and all the weight spaces are one dimensional. Then, the only thing left to check is the action of $c(\omega+\omega')$ on the module $M^\omega_{(D,\xi)}\otimes_{\C[z]} M^{\omega'}_{(D',\xi')}$. Let $m\otimes m'\in (M^\omega_{(D,\xi)}\otimes_{\C[z]} M^{\omega'}_{(D',\xi')})_{(z-\lambda)}= (M^\omega_{(D,\xi)})_{(z-\lambda)}\otimes_{\C[z]} (M^{\omega'}_{(D',\xi')})_{(z-\lambda)}$. Choose a sequence $\underline{i}$ such that path in the face lattice $\lambda, \lambda+\boldsymbol{e}_{i_1},\ldots, \lambda+\boldsymbol{e}_{i_1}+\cdots+\boldsymbol{e}_{i_{m+n}}$ does not cross any edges from the configuration $\omega+\omega'$. Then the same path does not cross any edges from $\omega$ nor from $\omega'$. We have
\begin{align*}
c(\omega+\omega')\cdot m\otimes m'&= X(\underline{i})\frac{1}{f^{\underline{i}}_{\omega+\omega'}(z)}\cdot m\otimes m' \\
&=  X(\underline{i})\cdot \left(\frac{1}{f^{\underline{i}}_{\omega+\omega'}(\lambda)} m\otimes m'\right) \\
\text{ (by Prop \ref{prop:central}) }&= \Delta_{t^\omega,t^{\omega'}}(X(\underline{i}))\left(\frac{1}{f^{\underline{i}}_{\omega}(\lambda)f^{\underline{i}}_{\omega'}(\lambda)}m\otimes m'\right)\\
&= \left(X(\underline{i})\otimes X(\underline{i})\right)\left(\frac{1}{f^{\underline{i}}_{\omega}(\lambda)}m\otimes \frac{1}{f^{\underline{i}}_{\omega'}(\lambda)} m'\right) \\
&= \left(X(\underline{i})\frac{1}{f^{\underline{i}}_{\omega}(\lambda)}m\right)\otimes \left(X(\underline{i})\frac{1}{f^{\underline{i}}_{\omega'}(\lambda)} m'\right)\\
&= (c(\omega)\cdot m )\otimes (c(\omega')\cdot m') \\
&= \xi m \otimes \xi' m' \\
&= \xi\xi'(m\otimes m').
\end{align*}
\end{proof}
Let $\Bzr\in \mathscr{C}$ defined by ${\bold 0}(e_1,e_2)=0$ for all $(e_1,e_2)\in E_1\times E_2$. Then $\Bbo=(1,1)=t^{\bold 0}\in\Omega'$ and $A(\Bbo)\simeq \C[z]\rtimes\Z[X_1^{\pm 1},X_2^{\pm 1}]$ as in Remark \ref{rem:unit} (although in this case the orbit is not torsion free so we do not have a unique simple module). Notice that $C\setminus\Supp({\bold 0})=C$ is connected and not contractible, and that $\Bbo$ is the trivial submonoid of $\Omega'$.
\begin{Proposition}\label{prop:cstar-grp-alg}
We have an algebra isomorphism 
$$\scA(\Bbo,\CO)\simeq \C[\C^\times]\qquad\text{ given by }\qquad [M^{\bold 0}_{(C,\xi)}]\mapsto \xi$$
where $\C[\C^\times]$ is the group algebra.
\end{Proposition}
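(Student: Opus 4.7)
The plan is to compute $\scA(\Bbo,\CO)$ directly by combining the classification of simples in Theorem \ref{thm:simples-cyl} with the tensor product formula of Proposition \ref{prop:prod-nonct}. Since $\Supp(\Bzr)=\emptyset$, the complement $C\setminus\Supp(\Bzr)=C$ is a single connected component which is not contractible. By Theorem \ref{thm:simples-cyl}, the isomorphism classes of simple objects in $A(\Bbo)\text{-wmod}_\CO$ are therefore in bijection with $\C^\times$ via $\xi\mapsto M^{\Bzr}_{(C,\xi)}$, and each such simple has one-dimensional weight spaces.

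Next I would show that every object of $A(\Bbo)\text{-wmod}_\CO$ has finite length, so that $K_0(A(\Bbo)\text{-wmod}_\CO)$ is the free abelian group on the classes $\{[M^{\Bzr}_{(C,\xi)}]\}_{\xi\in\C^\times}$. Because $t_1=t_2=1$ for $\Bbo$, the defining relations give $X_i^+X_i^-=X_i^-X_i^+=1$, so $X_i^+$ and $X_i^-$ are mutually inverse in $A(\Bbo)$. Thus for any $M\in A(\Bbo)\text{-wmod}_\CO$, the operators $X_i^\pm$ induce $\C$-linear isomorphisms $M_\Fm\cong M_{\sigma_i^{\pm 1}(\Fm)}$. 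Since $\CO$ consists of a single $\Z^2$-orbit, every weight space of $M$ has a common finite dimension $d=\dim_\C M_\Fm$. Combining this with one-dimensionality of the weight spaces of each simple, any composition series of $M$ has exactly $d$ factors, so $M$ has finite length, and Jordan--H\"older yields the claimed freeness of $K_0$.

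Finally, the ring structure is read off from Proposition \ref{prop:prod-nonct} applied with $\omega=\omega'=\Bzr$ and $D=D'=C$, the intersection $D\cap D'=C$ being noncontractible, which gives
\begin{equation*}
M^{\Bzr}_{(C,\xi)}\otimes_{\C[z]} M^{\Bzr}_{(C,\xi')}\simeq M^{\Bzr}_{(C,\xi\xi')}.
\end{equation*}
This is precisely the group-algebra multiplication on $\C[\C^\times]$ under the identification $[M^{\Bzr}_{(C,\xi)}]\leftrightarrow\xi$, and $[M^{\Bzr}_{(C,1)}]$ serves as the unit. The main conceptual obstacle is that $\CO$ is not torsion-free in the rank-$2$ setting, so Proposition \ref{prop:unital} does not directly supply a unit; the invertibility of $X_i^\pm$ in $A(\Bbo)$ serves as the needed substitute, simultaneously forcing finite length and pinning down the unit.
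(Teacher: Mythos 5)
Your proof is correct, and it follows essentially the same route as the paper's: identify the simples via Theorem \ref{thm:simples-cyl}, then read off the multiplication from Proposition \ref{prop:prod-nonct}. The genuinely useful addition is the finite-length argument: the paper simply asserts $K_0(A(\Bbo)\text{-wmod}_\CO)=\bigoplus_{\xi\in\C^\times}\Z[M^{\Bzr}_{(C,\xi)}]$, which implicitly requires every object to have finite length, and you supply the reason — for $\Bbo$ the relations force $X_i^+X_i^-=X_i^-X_i^+=1$, so the $X_i^\pm$ are invertible, weight-space dimensions are constant along the $\Z^2$-orbit $\CO$, and the simple factors have one-dimensional weight spaces. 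One small tightening: the clause ``any composition series of $M$ has exactly $d$ factors, so $M$ has finite length'' is phrased as if a composition series already exists; it is cleaner to say that for any strict chain $0=N_0\subsetneq N_1\subsetneq\cdots\subsetneq N_\ell\subseteq M$ the constant weight-space dimensions $d_0<d_1<\cdots<d_\ell\le d$ force $\ell\le d$, giving both ACC and DCC and hence a Jordan--H\"older series of length exactly $d$. Also, the closing remark slightly overstates the role of invertibility in pinning down the unit: what shows $[M^{\Bzr}_{(C,1)}]$ is the unit is the tensor-product formula itself (take $\xi'=1$), not the invertibility of $X_i^\pm$; invertibility is what buys you finite length.
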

\begin{proof}
By Theorem \ref{thm:simples-cyl}, we have 
$$K_0(A(\Bbo)\text{-wmod}_\CO)=\bigoplus_{\xi\in\C^\times}\Z [M^{\Bzr}_{(C,\xi)}].$$
In this case, $C\cap C=C$ is not contractible, hence by Proposition \ref{prop:prod-nonct}, we have 
$$ M^{\Bzr}_{(C,\xi)}\otimes M^{\Bzr}_{(C,\zeta)}\simeq M^{\Bzr}_{(C,\xi\zeta)}$$
and the result follows.
\end{proof}

\begin{Proposition}\label{prop:cyl-unital}
The algebra $\scA(\Omega',\CO)$ is unital, with unit element $[M^{\Bzr}_{(C,1)}]$.
\end{Proposition}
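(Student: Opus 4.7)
The plan is to exploit the commutativity of $\scA(\Omega',\CO)$ (guaranteed by Theorem~\ref{thm:coprod}(c)) and reduce the question to checking that $[M^{\Bzr}_{(C,1)}]$ acts trivially on the class of every simple weight module. Since the simples $M^{t^\omega}_{(D,\xi)}$ classified in Theorem~\ref{thm:simples-cyl} span each Grothendieck group $K_0(A(t^\omega)\text{-wmod}_\CO)$, it will suffice to prove that for each such $M$ we have $[M\otimes_{\C[z]} M^{\Bzr}_{(C,1)}]=[M]$. The argument will split according to whether the region $D$ is contractible.

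When $D$ is non-contractible, the intersection $D\cap C$ is simply $D$, which is again non-contractible, and $\xi\cdot 1=\xi$. So Proposition~\ref{prop:prod-nonct} applied with $(\omega',D',\xi')=(\Bzr,C,1)$ immediately produces an isomorphism
\[
M^{t^\omega}_{(D,\xi)}\otimes_{\C[z]}M^{\Bzr}_{(C,1)}\;\cong\;M^{t^\omega}_{(D,\xi)},
\]
and this case is done at the level of modules, not merely Grothendieck classes.

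When $D$ is contractible, Proposition~\ref{prop:prod-nonct} no longer applies, and my plan is to identify the tensor product $N:=M^{t^\omega}_{(D,0)}\otimes_{\C[z]}M^{\Bzr}_{(C,1)}$ directly by showing it is simple with support $L\cap D$; the uniqueness clause of Theorem~\ref{thm:simples-cyl} will then force $N\cong M^{t^\omega}_{(D,0)}$. Lemma~\ref{lemma:weightmod} already supplies the correct support and one-dimensional weight spaces, so the real task is simplicity. The decisive observation is that, by Remark~\ref{rem:unit}, $A(\Bbo)\cong\C[z]\rtimes\Z[X_1^{\pm 1},X_2^{\pm 1}]$ has invertible generators $X_i^{\pm}$, so each $X_i^{\pm}$ acts as a bijection between adjacent weight spaces of $M^{\Bzr}_{(C,1)}$. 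Consequently, the operator $\Delta_{t^\omega,\Bbo}(X_i^{\pm})=X_i^{\pm}\otimes X_i^{\pm}$ vanishes on a one-dimensional weight space of $N$ if and only if $X_i^{\pm}$ already vanishes on the corresponding weight space of $M^{t^\omega}_{(D,0)}$. Thus the graph of nonzero transitions between weight spaces of $N$ matches the connected face-edge graph of $L\cap D$ governing $M^{t^\omega}_{(D,0)}$, and simplicity of $N$ follows by a standard connectedness argument: any nonzero submodule of $N$ contains an entire weight space (one-dimensionality) and hence, by walking along nonzero transitions, contains all of them.

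The main obstacle is the contractible case. In the non-contractible setting the central element $c(\omega)$ of Proposition~\ref{prop:central} pins down the isomorphism type of the tensor product via its scalar action, but on contractible components this tool is unavailable, so one must argue simplicity by direct inspection of the weight-space action graph. The invertibility of $X_i^{\pm}$ on $M^{\Bzr}_{(C,1)}$ is what rescues the situation, by ensuring that tensoring with $M^{\Bzr}_{(C,1)}$ does not introduce any spurious zeros among the generators.
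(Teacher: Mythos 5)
Your proof is correct, and its overall shape matches the paper's: split by whether $D$ is contractible, invoke Proposition~\ref{prop:prod-nonct} in the non-contractible case, and argue directly in the contractible case. Where you go beyond the paper is in the contractible case. The paper's argument there is compressed to ``\emph{the tensor product has support $D$ and there is a unique simple module with this support},'' which tacitly uses that the tensor product is simple; that point is not entirely free. Your observation that $X_i^{\pm}$ are units in $A(\Bbo)$ (since $X_i^\pm X_i^\mp=\si_i^{\pm 1/2}(1)=1$), hence act bijectively on all weight spaces of $M^{\Bzr}_{(C,1)}$, so that $X_i^\pm\otimes X_i^\pm$ vanishes on a weight space of $N$ exactly when $X_i^\pm$ vanishes on the corresponding weight space of $M^{t^\omega}_{(D,0)}$, is precisely what makes the action graph of $N$ agree with that of the simple module and hence makes $N$ simple. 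This supplies a cleaner justification than what appears in the paper at that step. In short: same route, but you make explicit the simplicity argument the paper leaves to the reader.
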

\begin{proof}Let $M^\omega_{(D,\xi)}\in A(t^\omega)\text{-wmod}$, then $\Supp(M^\omega_{(D,\xi)}\otimes_{\C[z]}M^{\Bzr}_{(C,1)})=D\cap C=D$. If $D$ is contractible, this is enough to prove that 
\begin{equation}\label{eq:produnit}M^\omega_{(D,\xi)}\otimes_{\C[z]}M^{\Bzr}_{(C,1)} \simeq M^\omega_{(D,\xi)}\end{equation}
 as a module for $A(t^\omega\cdot \Bbo)=A(t^\omega)$, with $\xi=0$, since there is a unique simple module with the given support. If $D$ is not contractible, the isomorphism \eqref{eq:produnit} of $A(t^\omega)$-modules also holds (with $\xi\neq 0$), due to Proposition \ref{prop:prod-nonct}.
\end{proof}
\begin{Theorem}
As $\Omega'$-graded vector spaces, we have an isomorphism
$$\scA(\Omega',\CO)\simeq \bigoplus_{\omega : t^\omega\in \Omega'}\bigoplus_{\substack{D\in H_0(C\setminus\Supp(\omega)),\\ \xi\in Z(D)}}\C[M^\omega_{(D,\xi)}]$$
with $$Z(D)=\begin{cases} \{0\} & \text{ if $D$ is contractible, } \\ \C^\times & \text{ if $D$ is noncontractible. }\end{cases}$$
Multiplication in the algebra is given by
$$[M^\omega_{(D,\xi)}]\cdot [M^{\omega'}_{(D',\xi')}]=\begin{cases} 0 & \text{ if }D\cap D=\emptyset, \\ [M^{\omega+\omega'}_{(D\cap D',\xi\xi')}] & \text{ if $D\cap D'$ is noncontractible, }\\
\sum_{D''\in H_0(D\cap D')}[M^{\omega+\omega'}_{(D'',0)}] & \text{ if $D\cap D'$ is a union of contractible components. } \end{cases}$$
\end{Theorem}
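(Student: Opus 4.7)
The plan is to combine the classification of simples from Theorem \ref{thm:simples-cyl} with the tensor product formula from Proposition \ref{prop:prod-nonct} and a support-dimension argument for the mixed contractible case.

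First, I would establish the $\Omega'$-graded vector space decomposition. By Theorem \ref{thm:simples-cyl}, the isomorphism classes of simple objects in $A(t^\omega)\text{-wmod}_\CO$ are precisely the $M^\omega_{(D,\xi)}$ with $D$ ranging over connected components of $C\setminus\Supp(\omega)$ and $\xi \in Z(D)$, and every simple has one-dimensional weight spaces. Because modules in $A(t^\omega)\text{-wmod}_\CO$ have finite-dimensional weight spaces, every such module admits a composition series obtained by peeling off simple weight subquotients one weight at a time, so
$$K_0(A(t^\omega)\text{-wmod}_\CO) = \bigoplus_{(D,\xi)} \Z[M^\omega_{(D,\xi)}].$$
Summing over $\omega$ such that $t^\omega \in \Omega'$ and tensoring with $\C$ yields the claimed graded vector space description of $\scA(\Omega',\CO)$.

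Next I would verify the multiplication formula case by case. For $[M^\omega_{(D,\xi)}] \cdot [M^{\omega'}_{(D',\xi')}] = [M^\omega_{(D,\xi)} \otimes_{\C[z]} M^{\omega'}_{(D',\xi')}]$, Lemma \ref{lemma:weightmod} tells us the support is $(D\cap D')\cap L$ and each weight space is one-dimensional. If $D\cap D' = \emptyset$ the tensor product vanishes. If $D\cap D'$ is noncontractible, Remark \ref{rem:inters-comp} forces both $D$ and $D'$ to be noncontractible and Proposition \ref{prop:prod-nonct} directly identifies the tensor product as $M^{\omega+\omega'}_{(D\cap D',\xi\xi')}$. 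If instead $D \cap D' = \bigsqcup_i D_i''$ decomposes as a disjoint union of contractible connected components of $C\setminus\Supp(\omega+\omega')$, then each $D_i''$ is bounded, so $M^\omega_{(D,\xi)} \otimes_{\C[z]} M^{\omega'}_{(D',\xi')}$ is a finite-dimensional $A(t^{\omega+\omega'})$-module with exactly one basis vector at each weight in $\bigsqcup_i (D_i'' \cap L)$.

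The main obstacle is this third case, where we must show that the resulting class in the Grothendieck group equals $\sum_i [M^{\omega+\omega'}_{(D_i'',0)}]$ rather than, say, the class of a single indecomposable assembling these composition factors. The key point is that by Theorem \ref{thm:simples-cyl}, the only simple $A(t^{\omega+\omega'})$-modules with support contained in $D_i''$ is $M^{\omega+\omega'}_{(D_i'',0)}$ (since $D_i''$ is contractible, the only allowed scalar is $\xi=0$). Any composition factor of the finite-dimensional tensor product must be such a simple, and comparing dimensions weight-by-weight on both sides (each weight in $D_i'' \cap L$ contributes $1$ to both sides) forces each $M^{\omega+\omega'}_{(D_i'',0)}$ to appear with multiplicity exactly one. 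Collecting the three cases yields the announced multiplication rule, completing the proof.
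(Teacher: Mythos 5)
Your proposal is correct and follows essentially the same route as the paper's proof, which simply cites Theorem \ref{thm:simples-cyl} for the basis and invokes Lemma \ref{lemma:weightmod}, Proposition \ref{prop:prod-nonct}, and Remark \ref{rem:inters-comp} for the multiplication rule. The only place you add genuine content is in the third (contractible) case, where you spell out the composition-factor and weight-space-dimension count showing that each $M^{\omega+\omega'}_{(D_i'',0)}$ appears with multiplicity exactly one; the paper leaves this step implicit, and your argument correctly fills it in (since the $D_i''$ are themselves connected components of $C\setminus\Supp(\omega+\omega')$, every composition factor of the tensor product is one of the $M^{\omega+\omega'}_{(D_i'',0)}$, and the one-dimensionality of weight spaces on both sides fixes the multiplicities). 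One small imprecision: the phrase about ``peeling off simple weight subquotients one weight at a time'' is not literally correct since a simple generally occupies many weights; the intended statement is just that every object has a filtration with simple subquotients so that $K_0$ is free on simples, which is what the paper also takes for granted.
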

\begin{proof}
The statement about the basis for $\scA(\Omega',\CO)$ follows directly from Theorem \ref{thm:simples-cyl}. The multiplication formulas follow from the fact that $\Supp(M\otimes_{\C[z]}N)=\Supp(M)\cap \Supp(N)$, together with Proposition \ref{prop:prod-nonct} and Remark \ref{rem:inters-comp}.
\end{proof}
\begin{Theorem}\label{thm:pres-cyl}
$$\mathscr{A}(\Omega', \CO)\simeq \C[\gamma_\xi, x_{\pi}^\pm~|~\xi\in\C^\times,~\pi \text{ is a $(m,n)$-path }]/(\text{Rels})$$
where the relations are
\begin{enumerate}[{\rm (i)}]
\item$\gamma_{1}=1$,
\item$\gamma_{\xi_1}\gamma_{\xi_2}=\gamma_{\xi_1\xi_2}$,
\item\label{rel2}$ x_{\pi_1}^{\pm} x_{\pi_2}^{\pm}=x_{\pi'_1}^{\pm} x_{\pi'_2}^{\pm}$, \hspace{3.6cm} if $\pi_1+\pi_2=\pi'_1+\pi'_2$,
\item\label{rel1}$x_{\pi_1}^+ x_{\pi_2}^-=0$, \hspace{4.5cm}  if  $\pi_1\geq \pi_2$,
\item\label{rel2}$ x_{\pi_1}^+ x_{\pi_2}^-+x_{\pi_3}^+ x_{\pi_4}^-=x_{\pi'_1}^+ x_{\pi'_2}^-+x_{\pi'_3}^+ x_{\pi'_4}^-$, \hspace{0.7cm} if $\pi_1+\pi_2=\pi_3+\pi_4=\pi'_1+\pi'_2=\pi'_3+\pi'_4$ 

\hspace{5.6cm} and $\pi_1+\pi_3=\pi'_1+\pi'_3$,
\item$\gamma_{\xi}x_{\pi_1}^+ x_{\pi_2}^-=x_{\pi_1}^+ x_{\pi_2}^-$, \hspace{3.3cm} if $\pi_1$ and $\pi_2$ intersect.
\end{enumerate}
\end{Theorem}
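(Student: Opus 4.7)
The plan is to follow the standard pattern: define a $\C$-algebra map $\Phi$ from the free polynomial algebra $\C[\gamma_\xi, x_\pi^\pm]$ to $\scA(\Omega', \CO)$, check that each of the six families of relations lies in $\ker\Phi$ so that $\Phi$ descends to a map $\tilde\Phi$ on the quotient, verify surjectivity by hitting every basis element, and then establish injectivity by a normal-form argument. The natural choice for $\Phi$ is $\gamma_\xi \mapsto [M^{\Bzr}_{(C, \xi)}]$ and $x_\pi^\pm \mapsto [M^\pi_{(D^\pm_\pi, 1)}]$, where $D^+_\pi$ and $D^-_\pi$ denote the two non-contractible (annular) components of $C \setminus \Supp(\pi)$ lying respectively above and below a single $(m,n)$-path $\pi$.

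Relations (i) and (ii) are immediate from Proposition \ref{prop:cstar-grp-alg}. Relation (iv) follows because $\pi_1 \geq \pi_2$ forces $D^+_{\pi_1} \cap D^-_{\pi_2} = \emptyset$, so the corresponding tensor product vanishes by Lemma \ref{lemma:weightmod}. For relation (iii) with $+$ signs, the non-contractible component of $C \setminus \Supp(\pi_1 + \pi_2)$ lying above the whole configuration is $D^+_{\pi_1} \cap D^+_{\pi_2}$, which depends only on the sum $\pi_1 + \pi_2$; the multiplication formula of the preceding theorem then makes both sides equal to the same $[M^{\pi_1+\pi_2}_{(D, 1)}]$, and the $-$ case is symmetric. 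Relation (vi) uses that when $\pi_1$ and $\pi_2$ intersect, $D^+_{\pi_1} \cap D^-_{\pi_2}$ is a union of contractible components, so $x^+_{\pi_1} x^-_{\pi_2} = \sum [M^{\pi_1+\pi_2}_{(D'', 0)}]$; the contractible-intersection case of the multiplication formula is independent of the scalar parameters, so multiplying each summand by $\gamma_\xi$ leaves the sum unchanged.

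Relation (v) is the principal obstacle. Both sides are, by the multiplication formula, sums indexed by the contractible components of $C \setminus \Supp(\omega)$ with $\omega = \pi_1 + \pi_2$, so the identity reduces to the combinatorial claim that for each such component $D$, the number of index pairs $(i,j)$ among $\{(1,2),(3,4)\}$ with $D \subseteq D^+_{\pi_i} \cap D^-_{\pi_j}$ depends only on $\omega$ and on $\pi_1 + \pi_3$, not on the finer structure of the decomposition. I would prove this by a local analysis at $D$: whether a given path $\pi_i$ passes above or below $D$ is determined by the restriction of $\pi_i$ to a neighbourhood of $D$, and the partial sum $\pi_1 + \pi_3$ dictates how many of the four paths lie above $D$ regardless of the precise pairing into $+/-$ products.

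Surjectivity of $\tilde\Phi$ is handled by showing every basis element $[M^\omega_{(D, \xi)}]$ of $\scA(\Omega', \CO)$ lies in the image. When $D$ is non-contractible, decompose $\omega$ into a maximal chain of paths all lying on the same side of $D$ (using the chain decomposition of configurations from the remarks preceding the theorem) and apply (iii) iteratively to obtain $[M^\omega_{(D, 1)}]$ as an ordered product of $x^\pm_{\pi_i}$'s; multiplication by $\gamma_\xi$ rescales to arbitrary $\xi \in \C^\times$. When $D$ is contractible, pick $(m,n)$-paths $\pi^+$ and $\pi^-$ passing just above and just below $D$ so that $D$ is among the contractible components of $D^+_{\pi^+} \cap D^-_{\pi^-}$; then $x^+_{\pi^+} x^-_{\pi^-}$ is a sum containing $[M^{\pi^+ + \pi^-}_{(D, 0)}]$, and repeated use of (iii) and (v) lets one isolate the individual summand. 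Injectivity is finally obtained by extracting a normal form for the quotient (e.g.\ $\gamma_\xi$ times an ordered $+$-monomial followed by an ordered $-$-monomial, with correction terms accounting for contractible components) and matching dimensions in each $t^\omega$-graded component against the basis of $\scA(\Omega', \CO)$ indexed by the pairs $(D, \xi)$ from the preceding theorem.
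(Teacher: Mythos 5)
Your proposal follows the same overall scaffold as the paper's proof: the same map $\alpha$ (you call it $\Phi$) sending $\gamma_\xi \mapsto [M^{\Bzr}_{(C,\xi)}]$ and $x^\pm_\pi \mapsto [M^\pi_{(D^\pm_\pi,1)}]$, checking that the six families of relations lie in the kernel, arguing surjectivity by decomposing a configuration into a chain of paths, and closing with a normal-form argument for injectivity. The checks for (i), (ii), (iii), (iv), and (vi) line up with the paper's.

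However, your reduction of relation (v) has a gap. You assert that ``both sides are, by the multiplication formula, sums indexed by the contractible components of $C\setminus\Supp(\omega)$,'' but this is not generally true: when $\pi_i<\pi_j$ with $\pi_i,\pi_j$ not crossing, the intersection $D^+_{\pi_i}\cap D^-_{\pi_j}$ is a single noncontractible annulus and the corresponding product is $[M^{\omega}_{(D,1)}]$ for $D$ noncontractible, not a sum over contractible pieces. Thus the ``combinatorial claim about contractible components'' you propose to verify does not cover all the cases relation (v) can produce. The paper's argument handles both situations at once: it computes the support $D^+_{\pi_1\wedge\pi_3}\cap D^-_{\pi_2\vee\pi_4}$ and the weight-space dimensions (one on the complement of $D^+_{\pi_1\vee\pi_3}\cap D^-_{\pi_2\wedge\pi_4}$, two on it) of the \emph{whole direct sum} $(M^{\pi_1}\otimes M^{\pi_2})\oplus(M^{\pi_3}\otimes M^{\pi_4})$ and then invokes the classification of simple weight modules to see the $K_0$-class depends only on $\omega$ and on $\pi_1+\pi_3$, irrespective of whether noncontractible annuli occur. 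Your local analysis, to be correct, would need to account for the annular case as well.

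Your injectivity step is also quite vague. The paper's argument does real work here: it uses relation (iii) to order the $+$- and $-$-paths into descending chains, then the derived relation (vii) $x^+_{\pi_1}x^-_{\pi_2}+x^+_{\pi_2}x^-_{\pi_1}=x^+_{\pi_1\wedge\pi_2}x^-_{\pi_1\vee\pi_2}$ (together with (iii) and (iv)) to rearrange so that only the innermost pair $\pi_k,\nu_1$ may cross, and finally, in the crossing case, an induction on $|\underline{i}|$ over sign sequences of crossings using (v) to reduce to monomials $x^-_{\pi_{\underline{i}}}x^+_{\pi_{\underline{i}'}}$ with $|\underline{i}|=1$. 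The phrase ``correction terms accounting for contractible components'' does not capture this and would not by itself convince a reader that a well-defined normal form exists and that its image is linearly independent. In short: same strategy as the paper, but (v) and injectivity both need to be filled in substantially before the proof is complete.
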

\begin{Remark}\label{rem:joint-meet}Given two $(m,n)$-paths $\pi_1$ and $\pi_2$, there are two uniquely defined paths $\pi_1\vee\pi_2$ and $\pi_1\wedge\pi_2$ such that $(\pi_1\vee\pi_2)+(\pi_1\wedge\pi_2)=\pi_1+\pi_2$ and $(\pi_1\vee\pi_2)\geq\pi_1,\pi_2\geq(\pi_1\wedge\pi_2)$.
The relations (iv)-(v) then imply the following (by taking $\pi_3=\pi_2$, $\pi_4=\pi_1$, $\pi'_1=\pi'_4=\pi_1\wedge\pi_2$, $\pi'_2=\pi'_3=\pi_1\vee\pi_2$):
\begin{equation}\label{eq:wedge-vee} 
\mathrm{(vii)}\quad
x_{\pi_1}^+ x_{\pi_2}^-+x_{\pi_2}^+ x_{\pi_1}^-= x_{(\pi_1\wedge\pi_2)}^+ x_{(\pi_1\vee\pi_2)}^-.\end{equation}
\end{Remark}
\begin{Example}Let $m=3$, $n=4$, and consider the two paths $\pi_1$, $\pi_2$ on the left of Figure \ref{fig:vee-wedge}, then the paths $\pi_1\wedge\pi_2$ and $\pi_1\vee\pi_2$ are the ones on the right.
\end{Example}
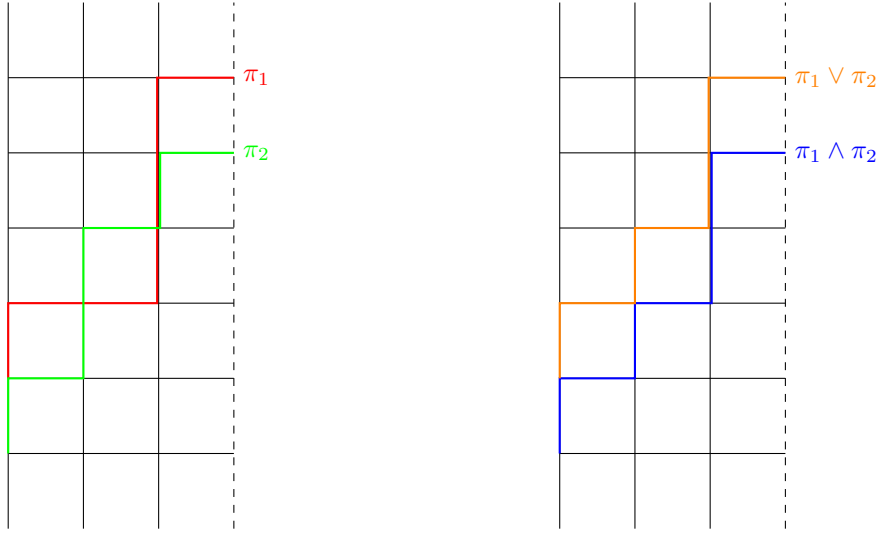
\begin{figure}
\centering
\begin{tikzpicture}[scale=1]
\foreach \y in {0,1,2,3,4,5} {
\draw (0,\y) -- (3,\y);}
\foreach \x in {0,1,2} {
\draw (\x,-1) -- (\x,6);}
\draw[dashed] (3,-1) -- (3,6);
\draw[red,thick] (0,1) -- (0,2) -- (1.98,2) -- (1.98,5) -- (3,5);
\draw[green,thick] (0,0) -- (0,1) -- (1,1) -- (1,3) -- (2.02,3) -- (2.02,4) -- (3,4);
\draw[red] (3,5) node[right] {$\pi_1$};
\draw[green] (3,4) node[right] {$\pi_2$}; 
\end{tikzpicture}\hspace{3.6cm}
\begin{tikzpicture}[scale=1]
\foreach \y in {0,1,2,3,4,5} {
\draw (0,\y) -- (3,\y);}
\foreach \x in {0,1,2} {
\draw (\x,-1) -- (\x,6);}
\draw[dashed] (3,-1) -- (3,6);
\draw[blue,thick] (0,0) -- (0,1) -- (1,1) -- (1,2) -- (2.02,2) -- (2.02,4) -- (3,4);
\draw[orange,thick] (0,1) -- (0,2) -- (1,2) -- (1,3) -- (1.98,3) -- (1.98,5) -- (3,5);
\draw[orange] (3,5) node[right] {$\pi_1\vee\pi_2$};
\draw[blue] (3,4) node[right] {$\pi_1\wedge\pi_2$}; 
\end{tikzpicture}
\caption{The paths $\pi_1$, $\pi_2$, $\pi_1\vee\pi_2$, and $\pi_1\wedge\pi_2$}
\label{fig:vee-wedge}
\end{figure}
\begin{proof}[Proof of Theorem \ref{thm:pres-cyl}]
Consider the map 
$$\alpha: \C[\gamma_\xi, x_{\pi}^\pm~|~\xi\in\C^\times,~\pi \text{ is a $(m,n)$-path }]\to \mathscr{A}(\Omega', \CO)$$
defined, for all $\xi\in\C^\times$ and for all $(m,n)$-paths $\pi$, by
$$ \gamma_\xi\mapsto [M^{\Bzr}_{C,\xi}],\quad x_{\pi}^\pm\mapsto [M^{\pi}_{D_{\pi}^\pm,1}]$$
where $D_{\pi}^+$ (resp. $D_{\pi}^-$) is the connected component of the cylinder above (resp. below) the path $\pi$ (since the path $\pi$ cuts the cylinder into two noncontractible components).

First we prove that this map is surjective. Let $\omega\in\mathscr{C}$, $D\in H_0(C\setminus\Supp(\omega))$. We write $\omega=\sum_{i=1}^k\pi_i$, where $\pi_i$ is an $(m,n)$-path on the cylinder and $\pi_1\geq \pi_2\geq \cdots\geq \pi_k$. We proceed by induction on $k$. If $k=0$, then $\omega=\Bzr$ and $[M^{\Bzr}_{D,\xi}]=\alpha(\gamma_\xi)$. If $k=1$, then 
$$[M^\omega_{D,\xi}]=[M^{\pi_1}_{D,\xi}]=[M^{\Bzr}_{C,\xi}\otimes_{\C[z]}M^{\pi_1}_{D,1}]=[M^{\Bzr}_{C,\xi}]\cdot[M^{\pi_1}_{D,1}]\in\Im(\alpha).$$ Now suppose that $k\geq 2$, and that the upper boundary (to the left and above) of $D$ is part of $\pi_s$, while the lower boundary (below and to the right) of $D$ is part of $\pi_{s+1}$. By swapping some edges of $\pi_s$ and $\pi_{s+1}$, if necessary, we can obtain paths $\pi'_s$ and $\pi'_{s+1}$ such that $\pi_s+\pi_{s+1}=\pi'_s+\pi'_{s+1}$, and $D=D_{\pi'_s}^+\cap D_{\pi'_{s+1}}^-$ (notice that $\pi_s'\not\geq\pi'_{s+1}$ in general). Let $\omega'=\omega-(\pi_s+\pi_{s+1})$, then there is $D'\in H_0(C\setminus\Supp(\omega'))$ such that $D\subseteq D'$. Then, if $D$ is contractible we have
$$ M^\omega_{D,0}\simeq M^{\omega'}_{D',\xi}\otimes_{\C[z]}M^{\pi'_s}_{D_{\pi'_s}^+,1}\otimes_{\C[z]}M^{\pi'_{s+1}}_{D_{\pi'_{s+1}}^-,1}$$
for any $\xi\in Z(D')$. If $D$ is non contractible we have, for all $\xi\in\C^\times$,
$$ M^\omega_{D,\xi}\simeq M^{\omega'}_{D',\xi}\otimes_{\C[z]}M^{\pi'_s}_{D_{\pi'_s}^+,1}\otimes_{\C[z]}M^{\pi'_{s+1}}_{D_{\pi'_{s+1}}^-,1}.$$
In either case, since $[M^{\omega'}_{D',\xi}]\in\Im(\alpha)$ by inductive hypothesis, it folllows that $[M^\omega_{D,0}]\in\Im(\alpha)$ (resp. for all $\xi\in\C^\times$, $[M^\omega_{D,\xi}]\in\Im(\alpha)$).

Then we want to prove that the relations are satisfied in $\scA(\Omega',\CO)$ so that the map $\alpha$  descends to a map on the quotient.
Relations (i) and (ii) follow directly from Prop. \ref{prop:cyl-unital} and Prop. \ref{prop:cstar-grp-alg}. 

If we have paths such that $\pi_1+\pi_2=\pi'_1+\pi'_2$, then $D_{\pi_1}^\pm\cap D_{\pi_2}^\pm=D_{\pi'_1}^\pm\cap D_{\pi'_2}^\pm$ are noncontractible, hence
$$M^{\pi_1}_{D_{\pi_1}^{\pm},1}\otimes_{\C[z]}M^{\pi_2}_{D_{\pi_2}^{\pm},1}\simeq M^{\pi_1+\pi_2}_{D_{\pi_1}^{\pm}\cap D_{\pi_2}^\pm ,1}= M^{\pi'_1+\pi'_2}_{D_{\pi'_1}^{\pm}\cap D_{\pi'_2}^\pm ,1}\simeq M^{\pi'_1}_{D_{\pi'_1}^{\pm},1}\otimes_{\C[z]}M^{\pi'_2}_{D_{\pi'_2}^{\pm},1}$$
from which relation (iii) follows.

If $\pi_1$, $\pi_2$ are two paths, with $\pi_1\geq\pi_2$, then $D_{\pi_1}^+\cap D_{\pi_2}^-=\emptyset$, hence 
$$[M^{\pi_1}_{D_{\pi_1}^+,1}]\cdot[M^{\pi_2}_{D_{\pi_2}^-,1}]=[M^{\pi_1}_{D_{\pi_1}^+,1}\otimes_{\C[z]}M^{\pi_2}_{D_{\pi_2}^-,1}]=0,$$ 
which shows that relation (iv) is satisfied. 

For relation (v), let $\omega=\pi_1+\pi_2=\pi_3+\pi_4$ and notice that the module
$$ (M^{\pi_1}_{D_{\pi_1}^+,1}\otimes_{\C[z]}M^{\pi_2}_{D_{\pi_2}^-,1})\oplus (M^{\pi_3}_{D_{\pi_3}^+,1}\otimes_{\C[z]}M^{\pi_4}_{D_{\pi_4}^-,1})$$
has support $D^+_{\pi_1\wedge\pi_3}\cap D^-_{\pi_2\vee\pi_4}$ and all the weight spaces are of dimension one, except the ones in $D^+_{\pi_1\vee\pi_3}\cap D^-_{\pi_2\wedge\pi_4}$ which have dimension two. It follows then from Theorem \ref{thm:simples-cyl} that
\begin{align*} (M^{\pi_1}_{D_{\pi_1}^+,1}\otimes_{\C[z]}M^{\pi_2}_{D_{\pi_2}^-,1})\oplus (M^{\pi_3}_{D_{\pi_3}^+,1}\otimes_{\C[z]}M^{\pi_4}_{D_{\pi_4}^-,1}) & \simeq (M^{\pi_1\wedge\pi_3}_{D^+_{\pi_1\wedge\pi_3},1}\otimes_{\C[z]}M^{\pi_2\vee\pi_4}_{D_{\pi_2\vee\pi_4}^-,1})\oplus (M^{\pi_1\vee\pi_3}_{D_{\pi_1\vee\pi_3}^+,1}\otimes_{\C[z]}M^{\pi_2\wedge\pi_4}_{D_{\pi_2\wedge\pi_4}^-,1}) \\
& \simeq  (M^{\pi'_1}_{D_{\pi'_1}^+,1}\otimes_{\C[z]}M^{\pi'_2}_{D_{\pi'_2}^-,1})\oplus (M^{\pi'_3}_{D_{\pi'_3}^+,1}\otimes_{\C[z]}M^{\pi'_4}_{D_{\pi'_4}^-,1})
\end{align*}
for any $\pi'_1,\pi'_2,\pi'_3,\pi'_4$ such that $\pi'_1+\pi'_2=\pi'_3+\pi'_4=\omega$ and $\pi'_1+\pi'_3=\pi_1+\pi_3$, because then we have $\pi'_1\vee\pi'_3=\pi_1\vee\pi_3$, $\pi'_1\wedge\pi'_3=\pi_1+\pi_3$ and similarly for $\pi'_2$ and $\pi'_4$. Thus the relation is verified.

If $\pi_1$ and $\pi_2$ are intersecting paths, then $D_{\pi_1}^+\cap D_{\pi_2}^-$ is a union of contractible components, hence
\begin{align*}[M^{\Bzr}_{C,\xi}]\cdot [M^{\pi_1}_{D_{\pi_1}^+,1}]\cdot[M^{\pi_2}_{D_{\pi_2}^-,1}]&=[M^{\Bzr}_{C,\xi}]\cdot\sum_{D''\in H_0(D_{\pi_1}^+\cap D_{\pi_2}^-)}[M^{\pi_1+\pi_2}_{(D'',0)}]\\
&=\sum_{D''\in H_0(D_{\pi_1}^+\cap D_{\pi_2}^-)} [M^{\Bzr}_{C,\xi}]\cdot[M^{\pi_1+\pi_2}_{(D'',0)}] \\
&= \sum_{D''\in H_0(D_{\pi_1}^+\cap D_{\pi_2}^-)}[M^{\pi_1+\pi_2}_{(D'',0)}]\\
&= [M^{\pi_1}_{D_{\pi_1}^+,1}]\cdot[M^{\pi_2}_{D_{\pi_2}^-,1}]
\end{align*}
and relation (vi) is satisfied.

Finally, we want to prove that the map is injective by showing that the image of a basis of the polynomial ring maps to linearly independent elements. A general element in the polynomial ring can be written, using relation (ii), as a linear combination of monomials such as
\begin{equation}\label{eq:polysum} \gamma_{\xi} (x^-_{\pi_{1}})^{a_{\pi_{1}}}\cdots (x^-_{\pi_{k}})^{a_{\pi_{k}}}(x^+_{\nu_{1}})^{b_{\nu_{1}}}\cdots (x^+_{\nu_{s}})^{b_{\nu_{s}}}.\end{equation}
But then, using relations (iii) repeatedly, we can assume that $\pi_{1}>\cdots>\pi_{k}$ and $\nu_{1}>\cdots>\nu_{s}$. Further, by applying \eqref{eq:wedge-vee} and again (iii), as many times as necessary, we can obtain that
$\pi_{1}>\cdots>\pi_{k-1}>\pi_{k}>\nu_{2}\cdots>\nu_{s}$ and $\pi_{1}>\cdots>\pi_{k-1}>\nu_{1}>\nu_{2}\cdots>\nu_{s}$. This is because if, for example, $\pi_{k}\not\geq\nu_{2}$, then
\begin{align*} x^-_{\pi_{k}}x^+_{\nu_{1}}x^+_{\nu_{2}}&=(x^-_{\pi_{k}}x^+_{\nu_{2}})x^+_{\nu_{1}}\\
\text{ ( by (vii) ) }&=(x^-_{\pi'}x^+_{\nu'}-x^+_{\pi_{k}}x^-_{\nu_{2}})x^+_{\nu_{1}} \\
\text{ with $\pi'=\pi_{k}\vee\nu_{2}$ and $\nu'=\pi_{k}\wedge\nu_{2}$} & \\
&= x^-_{\pi'}x^+_{\nu'}x^+_{\nu_{1}}-x^+_{\pi_{k}}x^-_{\nu_{2}}x^+_{\nu_{1}} \\
\text{ ( by (iv) since $\nu_{1}\geq\nu_{2}$ ) }&= x^-_{\pi'}x^+_{\nu'}x^+_{\nu_{1}} \\
&= x^-_{\pi'}x^+_{\nu_{1}}x^+_{\nu'}\\
\text{ ( by (iii) ) }&=x^-_{\pi'}x^+_{(\nu_{1}\vee\nu')}x^+_{(\nu_{1}\wedge\nu')}
\end{align*}
and now we have both $\pi'\geq\nu'\geq (\nu_{1}\wedge\nu')$ and $(\nu_{1}\vee\nu')\geq (\nu_{1}\wedge\nu')$.
In general then, we can always reduce to monomials in \eqref{eq:polysum} such that only the paths $\pi_{k}$ and $\nu_{1}$ can potentially cross. 

In the case where $\pi_{k}>\nu_{1}$ and the two paths are disjoint, then $D^-_{\pi_{k}}\cap D^+_{\nu_{1}}$ is connected and noncontractible, and we have
$$\alpha\left(\gamma_{\xi} (x^-_{\pi_{1}})^{a_{\pi_{1}}}\cdots (x^-_{\pi_{k}})^{a_{\pi_{k}}}(x^+_{\nu_{1}})^{b_{\nu_{1}}}\cdots (x^+_{\nu_{s}})^{b_{\nu_{s}}}\right)=\left[M^\omega_{D^-_{\pi_{k}}\cap D^+_{\nu_{1}},\xi}\right]$$
where $\omega=\sum_i a_{\pi_{i}}\pi_{i}+\sum_j b_{\nu_{j}}\nu_{j}$, hence the images of different monomials are linearly independent.

In the case where $\pi_{k}$ and $\nu_{1}$ do intersect, then using relation (vi) we have $\gamma_{\xi}(x^-_{\pi_{k}})^{a_{\pi_{k}}}(x^+_{\nu_{1}})^{b_{\nu_{1}}}=(x^-_{\pi_{k}})^{a_{\pi_{k}}}(x^+_{\nu_{1}})^{b_{\nu_{1}}}$. Now, suppose that there are $m$ bounded connected components of $C\setminus\Supp(\pi_k+\nu_1)$, and for each $\underline{i}=(i_1,\ldots,i_m)\in\{0,1\}^m$, let $\pi_{\underline{i}}$ be the unique path that passes above the $j$-th component if $i_j=1$ and below the $j$-th component if $i_j=0$, for all $j=1,\ldots m$. For $\underline{i}\in\{1,0\}^m$, let $|\underline{i}|=\sum_{j=1}^m i_j$ and $(\underline{i})'=(1-i_1,\ldots,1-i_m)$ be the `opposite' sequence. Then for all $\underline{i}\in\{0,1\}^m$ we have that $\pi_k+\nu_1=\pi_{\underline{i}}+\pi_{\underline{i}'}$. We claim that any monomial of the form $x^-_{\pi_{\underline{i}}}x^+_{\pi_{\underline{i}'}}$ can be written as a linear combination of $x^-_{\pi_{\underline{j}}}x^+_{\pi_{\underline{j}'}}$, with $|j|=1$. We proceed by induction on $|i|$. If $|i|=0$, then $\pi_{\underline{i}'}>\pi_{\underline{i}}$ so $x^-_{\pi_{\underline{i}}}x^+_{\pi_{\underline{i}'}}=0$ by (iv). If $|i|=1$, the result is clear. Now suppose that $|i|\geq 2$, and that $i_\ell=1$ for some $\ell$. Define $\underline{i}^\ell=(i_1,\ldots, i_{\ell-1},0,i_{\ell+1},\ldots, i_m)$ and $\underline{\ell}=(0,\ldots,0,\stackrel{\ell}{1},0,\ldots,0)$. Then
\begin{align*}
x^-_{\pi_{\underline{i}}}x^+_{\pi_{\underline{i}'}}&=x^-_{\pi_{\underline{i}}}x^+_{\pi_{\underline{i}'}}+x^-_{\pi_{(0,\ldots,0)}}x^+_{\pi_{(1,\ldots,1)}}\\
\text{ ( by (v) ) } &= x^-_{\pi_{\underline{i}^\ell}}x^+_{\pi_{(\underline{i}^\ell)'}}+x^-_{\pi_{\underline{\ell}}}x^+_{\pi_{\underline{\ell}'}}
\end{align*}
and by inductive hypothesis, since $|\underline{i}^\ell|=|\underline{i}|-1$, $x^-_{\pi_{\underline{i}^\ell}}x^+_{\pi_{(\underline{i}^\ell)'}}$ can be written as a linear combination as desired, so the claim follows.

Then, any monomial containing $x^-_{\pi_{k}}x^+_{\pi_{\nu_1}}$ can be written as a linear combination of terms containing $x^-_{\pi_{\underline{i}}}x^+_{\pi_{\underline{i}'}}$, with $|\underline{i}|=1$. These map under $\alpha$ to $[M^\omega_{D,0}]$ with $D$ a contractible connected component of $C\setminus\Supp(\omega)$ which are linearly independent elements of $\scA(\Omega',\CO)$, showing that $\alpha$ is indeed injective, hence an isomorphism.
\end{proof}

\bibliographystyle{siam} 

\end{document}